\documentclass{article}
\usepackage{parskip}
\usepackage[margin=1.0in]{geometry}

\usepackage{graphicx}
\usepackage{caption}
\usepackage{subcaption}
\usepackage{tikz}
\usepackage{tikz-cd}
\usepackage{physics}
\usepackage{amsmath}
\usepackage{amsthm}
\usepackage{amstext}
\usepackage{amssymb}
\usepackage{thmtools}
\usepackage{bm}
\newtheorem{theorem}{Theorem}[section]
\newtheorem{corollary}[theorem]{Corollary}
\newtheorem{lemma}[theorem]{Lemma}
\newtheorem{prop}[theorem]{Proposition}

\theoremstyle{definition}
\newtheorem{defn}{Definition}[section]
\newtheorem{exmp}[theorem]{Example}

\usepackage[english]{babel}
\usepackage{pdfpages}

\usepackage{dsfont}
\usepackage{xcolor}
\definecolor{amber}{rgb}{1.0, 0.6, 0.0}
\usepackage{textcomp}
\usepackage{listings}
\usepackage{relsize}
\usepackage{multicol}
\usepackage{hyperref}
\hypersetup{
	colorlinks=true,
	linkcolor=black,
	filecolor=black,      
	urlcolor=black,
	citecolor=blue,
}
\usepackage[capitalise]{cleveref}
\crefname{figure}{Fig.}{Fig.}
\crefname{table}{Table}{Tables}
\crefname{equation}{Eqn.}{Eqns.}
\crefname{algocf}{Algorithm}{Algorithms}
\crefname{exmp}{Example}{Ex.}

\crefname{lemma}{Lemma}{Lemmas}
\crefname{prop}{Proposition}{Propositions}
\crefname{corollary}{Corollary}{Cor.}
\crefname{defn}{Definition}{Definitions}

\usepackage{url}
\usepackage{pifont}

\usepackage{bbm}
\usepackage{mathtools}
\usepackage{hhline}
\usepackage{tabularx}

\newcommand{\N}{\mathbb{N}}
\newcommand{\R}{\mathbb{R}}

\newcommand{\mcM}{\mathcal{M}}
\newcommand{\mcN}{\mathcal{N}}

\newcommand{\mcL}{\mathcal{L}}

\newcommand{\Cont}{\mathrm{Cont}}
\newcommand{\SCont}{\mathrm{SCont}}
\newcommand{\Diff}{\mathrm{Diff}}

\newcommand{\Lie}{\mathrm{Lie}}

\newcommand{\mfc}{\mathfrak{cont}}
\newcommand{\mfg}{\mathfrak{g}}

\newcommand{\mfsc}{\mathfrak{scont}}
\newcommand{\mfpd}{\mathfrak{pdiff}}
\newcommand{\mfX}{\mathfrak{X}}

\usepackage{authblk}

\title{Local Universal Splitting Integrators for Contact Hamiltonian Systems}
\author[1,2]{George A. Kevrekidis}
\date{May 2026 \\ LA-UR-26-23804}

\affil[1]{T-5, Los Alamos National Laboratory, Los Alamos, NM 87545, USA}
\affil[2]{Department of Applied Mathematics and Statistics, Johns Hopkins University, Baltimore, MD 21218, USA}

\begin{document}

\maketitle

\begin{abstract}
    Contact Hamiltonian systems extend symplectic Hamiltonian mechanics to dissipative settings while retaining geometric structure. We develop a structure-preserving splitting framework for contact Hamiltonian systems on $J^1(\R^n)$ based on two tractable classes of exact-contact subflows: strict contactomorphisms and prolonged diffeomorphisms. Our main theoretical result is that the Lie algebra generated by the corresponding strict and prolonged Hamiltonians contains all polynomial-in-$p$ Hamiltonians and is therefore dense, in the $C^r$ topology on compact sets, in the Lie algebra of smooth contact Hamiltonians. This yields a local universality result and contact splitting integrators built from exact strict and prolonged subflows. We then show how these subflows can be realized numerically by lifting symplectic integrators on $T^*\R^n$ and ODE integrators on $\R^n\times\R$. Finally, we illustrate the framework on a sequence of low-dimensional examples.
\end{abstract}

\section{Introduction}

Contact geometry is an important subfield of differential geometry, often presented as an `odd-dimensional counterpart' to symplectic geometry. From a physical perspective, contact geometry is the natural geometric framework for describing dissipative systems, while still retaining a Hamiltonian-like structure and the coordinate-freedom enjoyed by symplectic systems. The theory thus finds many natural applications in modelling non-equilibrium physical systems, including thermodynamics, mechanics, and optics \cite{kholodenko2013applications_contact,arnold2013mechanics,mrugala1991contact_thermo,grmela2014contact_thermo}. Contact transformations were originally studied by Sophus Lie \cite{lie_contact} as they are, in some sense, the most general type of equivalence transformation for partial differential equations, and are prominently featured in the theory of integrable systems \cite{olver1995equivalence}. The formalism of contact geometry has also been used in the theory of optimal control and optimization, and it may also provide a natural geometric language for aspects of mean-field Langevin dynamics, a common model in molecular dynamics simulations. At the same time, more formal mathematical research is also of interest, most notably in low-dimensional contact topology and knot theory \cite{geiges2001brief,geiges2006contact,massot2014topological}, but we will refrain from discussing these applications here.

Motivated by this wide range of applications, there has been renewed interest in developing the mathematical framework and accompanying structure-preserving numerical methods for contact systems, which faithfully capture their dissipative character \cite{de2019contact,simoes2020contact,de2023time,gaset2020contact}; perhaps surprisingly, the development of such methods has lagged significantly behind the development of symplectic integrators for Hamiltonian systems, which are a staple of structure-preserving numerical integration. In this work, we develop a splitting-based framework for contact Hamiltonian systems built from two tractable classes of exact contact subflows: strict contactomorphisms and prolonged diffeomorphisms. The theoretical ingredient is a Lie-algebra density result on jet spaces, while the numerical ingredient is that both classes admit concrete lifted realizations from symplectic and base-space ODE integrators.

\subsection{Literature Review}

Structure-preserving integrators for contact Hamiltonian systems were originally proposed in \cite{feng2010contact,feng2010symplectic}, where the correspondence between contact Hamiltonian systems on $\R^{2n+1}$ and homogeneous symplectic Hamiltonian systems on $\R^{2n+2}$ is exploited to construct (implicit) contact integrators (for a mathematical formulation of the latter, see also \cite[Appendix 4]{arnold2013mechanics}). The homogeneous structure of such `lifted' conservative systems is also directly exploited in \cite{araujo2025jacobi}, which studies the geometric integration of the more general Jacobi systems within a Poisson-manifold framework; contact systems are a special case. For contact systems specifically, a rigorous mathematical framework for the construction of contact variational integrators (of arbitrary order) is developed in \cite{bravetti2020numerical,vermeeren2019contact}, based on a discrete version of the Herglotz variational principle described in \cite{guenther1996herglotz}. At the same time, for separable contact Hamiltonian systems, splitting integrators have often been used \cite{zadra2023topics,zadra2021vdp} for numerical integration. However, as noted in \cite{mclachlan2002splitting}, due to the complexity of the corresponding contact ODEs, there are few contact Hamiltonians that admit closed-form splittings.

Contact structure-preserving perspectives and algorithms have also begun to emerge in the machine learning literature, where they can be used to understand the geometric properties of optimization algorithms \cite{francca2021dissipative,bravetti2023bregman}, or construct structure-preserving neural networks for learning contact Hamiltonian systems from data \cite{testa2025geometric}. 

A foundational treatment of contact geometry and the contactomorphism group can be found in the classical references \cite{guenther1996herglotz,krieg_michorl1997convenient,banyaga2013structure,geiges2006contact}, and in \cite{olver1995equivalence}, which stresses the connection to PDE equivalence. The notes by Ko Honda \cite{honda2019notes} also provide a comprehensive overview of the subject.

\subsection{Motivation}
The underlying observation motivating this work is that, while few contact Hamiltonian systems admit a closed-form splitting, there are two classes of contact subflows for which we already possess mature theoretical and numerical tools. These are (1) strict contactomorphisms and (2) prolonged diffeomorphisms, which we define precisely in the next section. Each class can be integrated while preserving the contact structure exactly, and each comes with an established lower-dimensional numerical toolkit: symplectic integrators for the strict part and standard ODE integrators, together with prolongation, for the prolonged part. In practice, many Hamiltonians of interest already decompose into strict and prolonged pieces, or require only a small number of commutator corrections, so these two tractable families already provide a broad practical design space.

Our main theoretical result is that the Lie algebra generated by the corresponding strict and prolonged Hamiltonians is dense, in the $C^r$ topology on compact sets, in the Lie algebra of smooth contact Hamiltonians on $J^1(\R^n)$. This yields a local universal splitting framework in a complementary sense: For already-structured Hamiltonians, one can work directly with strict and prolonged generators. For generic Hamiltonians, the polynomial-in-$p$ step serves mainly as a universality guarantee: one replaces the target Hamiltonian by a polynomial-in-$p$ surrogate, represents that surrogate using strict and prolonged generators, and then applies standard BCH-based splitting and commutator constructions.

\subsection{Contributions and Structure}

\paragraph{Overview.} We begin with a brief review of the minimal relevant mathematical background in \cref{sec:Background}, including the definitions of strict contactomorphisms and prolonged diffeomorphisms on the jet bundle. We then prove in \cref{sec:Lie_Density} that the Lie algebra generated by the corresponding strict and prolonged Hamiltonians is dense, in the $C^r$ topology on compact sets, in the Lie algebra of smooth contact Hamiltonians. In \cref{sec:Integrators} we combine this representation with standard BCH-based product-formula constructions to obtain a local universal splitting framework and asymptotic error estimates. In \cref{sec:Numerics} we show how the relevant exact subflows can be realized in practice by lifting symplectic integrators on $T^*\R^n$ and ODE integrators on $\R^n\times\R$, and we demonstrate the resulting methods on a range of low-dimensional examples. In \cref{sec:Discussion}, we summarize our findings and discuss their implications for geometric integration and for the study of contact systems more broadly.

\paragraph{Contributions.} The main contributions of this work are as follows:
\begin{itemize}
    \item We prove a Lie-algebra density result on jet spaces, showing that the Lie algebra generated by strict contact Hamiltonians and prolonged Hamiltonians is dense in the full Lie algebra of smooth contact Hamiltonians on $J^1(\R^n)$, in the $C^r$ topology on compact sets.
    \item We identify strict contactomorphisms and prolonged diffeomorphisms as a practically useful pair of exact-contact building blocks: many Hamiltonians already decompose into these pieces, or require only a small number of commutator corrections, before any approximation step is invoked.
    \item We use the same representation to construct a local universal splitting framework for the generic fallback case, in which polynomial approximants of contact Hamiltonians admit high-order contact one-step maps built from finite compositions of exact strict contactomorphisms and prolonged diffeomorphisms.
    \item We derive approximation bounds and asymptotic error estimates on compact sets and finite time intervals for the resulting splitting constructions.
    \item We show how the two abstract building blocks can be realized in practice by lifting symplectic integrators on $T^*\R^n$ and ODE integrators on $\R^n\times\R$ to the jet space, and we demonstrate the resulting methods on a range of low-dimensional examples.
\end{itemize}

\section{Mathematical Background}
\label{sec:Background}

\subsection{Definitions}

A \textbf{contact manifold} is a tuple $(\mcM,\xi)$, where $\mcM$ is an odd dimensional smooth manifold (of dimension $2n+1$) and $\xi=\ker\alpha$ is the kernel of a one-form $\alpha$ on $\mcM$ satisfying the non-degeneracy condition $\alpha\wedge (d\alpha)^n \neq 0$. The one-form $\alpha$ is called a \textbf{contact form}, and the distribution $\xi$ is called a \textbf{contact structure}.

\begin{exmp} A simple example of a contact manifold is $\R^{2n+1}$ with coordinates $(x^i,u,p_i)$, where $x\in\R^n$ are the `position' coordinates, $p\in\R^n$ are the `momentum' coordinates, and $u\in\R$ is an additional fiber coordinate. The contact form can be taken to be $\alpha = du - p_i dx^i$, which defines a contact structure on $\R^{2n+1}$. The non-degeneracy condition yields the standard volume form $\alpha\wedge (d\alpha)^n = du \wedge dx^1 \wedge dp_1 \wedge ... \wedge dx^n \wedge dp_n$.
\end{exmp}

A \textbf{contactomorphism} is a diffeomorphism $\psi$ that preserves the contact structure, i.e. $\psi^*\xi = \xi$. Equivalently, $\psi$ is a contactomorphism if there exists a scalar function $\lambda:\mcM\to\R$ such that $\psi^*\alpha = e^{\lambda}\alpha$. The term $e^\lambda$ is a non-vanishing positive\footnote{Depending on the geometric setting, negative conformal factors may also be allowed, as long as they remain non-vanishing.} function called the \textbf{conformal factor} of $\psi$. The set of all contactomorphisms on a contact manifold $(\mcM,\xi)$ forms a group under composition, the \textbf{group of contactomorphisms}. We denote the group of contactomorphisms by $\Cont(\mcM,\xi)$.

Contact manifolds have a `local normal form' given by the Darboux theorem, stating:

\begin{theorem}[Darboux] Let $(\mcM,\xi=\ker\alpha)$ be a contact manifold. Then, for any point $z\in\mcM$, there exists a neighborhood $U$ of $z$ and a diffeomorphism $\phi:U\to V\subset \R^{2n+1}$, where $V$ is an open subset with coordinates $(x^i,u,p_i)$ such that $\phi(z)=0$ and $\phi^*(du - p_idx^i)=\alpha\vert_U$.
\end{theorem}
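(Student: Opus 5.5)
The plan is a Moser-type argument, built in two stages. First I normalize the form algebraically at the point $z$; then I remove the remaining discrepancy near $z$ by the flow of a time-dependent vector field.

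\textbf{Stage 1: algebraic normalization at $z$.} At $z$ the hyperplane $\xi_z=\ker\alpha_z$ carries the nondegenerate two-form $d\alpha_z|_{\xi_z}$. Nondegeneracy holds because $\alpha\wedge(d\alpha)^n\neq 0$. By linear symplectic algebra I choose a basis $e_1,\dots,e_n,f_1,\dots,f_n$ of $\xi_z$ that is symplectic for this form. I complete it by the Reeb vector $R_z$, which is characterized by $\alpha(R)=1$ and $\iota_R d\alpha=0$. Using any chart centered at $z$, I then pick local coordinates $(x,u,p)$ with origin at $z$ whose differential at $z$ sends these vectors to the corresponding basis vectors. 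This gives a diffeomorphism $\phi_0$ onto an open set $V_0\ni 0$. For $\alpha_0:=du-p_i dx^i$ and $\alpha_1:=(\phi_0^{-1})^*\alpha$ we then have $\alpha_1|_0=\alpha_0|_0$ and $d\alpha_1|_0=d\alpha_0|_0$, after possibly rescaling $\alpha$ by a positive constant. This uses the fact that a contact form at a point is determined up to linear equivalence by the pair consisting of the covector and the two-form on its kernel.

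\textbf{Stage 2: Moser's trick.} Set $\alpha_t=(1-t)\alpha_0+t\alpha_1$ for $t\in[0,1]$. Since $\alpha_t$ agrees with $\alpha_0$ to first order at $0$, each $\alpha_t$ is contact on a common small ball $B$ around $0$, by compactness of $[0,1]$ and openness of the nondegeneracy condition. I look for an isotopy $\psi_t$ with $\psi_0=\mathrm{id}$ and $\psi_t^*\alpha_t=\alpha_0$. Differentiating, I need a vector field $X_t$ with
\[
\mathcal{L}_{X_t}\alpha_t+\dot\alpha_t=0,\qquad \dot\alpha_t=\alpha_1-\alpha_0 .
\]
I seek $X_t\in\ker\alpha_t$, so that by Cartan's formula the equation becomes $\iota_{X_t}d\alpha_t+\dot\alpha_t=0$. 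Evaluating on the Reeb field $R_t$ of $\alpha_t$ shows this can only hold if $\dot\alpha_t(R_t)=0$, which fails in general. The fix is to allow a conformal factor. I require $\psi_t^*\alpha_t=e^{\lambda_t}\alpha_0$, which leads to the equation $\iota_{X_t}d\alpha_t+\dot\alpha_t=\mu_t\alpha_t$. Taking $\mu_t=\dot\alpha_t(R_t)$, the form $\dot\alpha_t-\mu_t\alpha_t$ annihilates $R_t$. Because $d\alpha_t|_{\ker\alpha_t}$ is nondegenerate, there is then a unique $X_t\in\ker\alpha_t$ solving the equation, and it depends smoothly on $t$. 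Since $\alpha_1-\alpha_0$ vanishes at $0$, we get $X_t(0)=0$. The time-dependent flow therefore fixes the origin and exists up to time $1$ on a smaller ball $B'$.

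\textbf{Assembling and the main obstacle.} Setting $\phi=\psi_1^{-1}\circ\phi_0$ on $U=\phi_0^{-1}(B')$, we get $\phi(z)=0$ and $\phi^*(du-p_idx^i)=e^{f}\alpha$ for some function $f$. This is the contact-structure version of the statement, which is all a contact structure $\xi$ determines. If the exact equality $\phi^*\alpha_0=\alpha|_U$ is demanded, I first run the argument above to reduce to $\alpha=e^{g}\alpha_0$ in coordinates. I then absorb the factor by the explicit contactomorphism $(x,u,p)\mapsto (x,\,U(x,u),\,P(x,u,p))$, where $U$ solves $\partial_u U=e^{g}$ with the remaining terms matched to $p$. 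Equivalently, I apply the standard normal-form argument for the Reeb field, straightening $R_\alpha$ to $\partial_u$ and then applying Darboux for the induced symplectic form on a transversal. The main obstacle is exactly this removal of the conformal factor: Moser's trick only controls $\ker\alpha$, and I would handle the factor by flow-box coordinates for $R_\alpha$ together with symplectic Darboux on the quotient.
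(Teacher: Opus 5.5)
The paper states this as classical background and gives no proof, so I am comparing your argument with the standard one. As written, your proposal has a genuine gap in its final step, though the fix is short.

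\textbf{What works and what fails.} Stages 1 and 2 are correct, but they only give $\phi^*(du-p_i\,dx^i)=e^{f}\alpha$. That is the normal form for the contact \emph{structure}, whereas the statement requires equality of contact \emph{forms}. The step you propose for removing the conformal factor fails. For $\Psi(x,u,p)=(x,U(x,u),P(x,u,p))$ one computes $\Psi^*(du-p_i\,dx^i)=\partial_uU\,du+(\partial_{x^i}U-P_i)\,dx^i$. So $\Psi^*\alpha_0=e^g\alpha_0$ forces $\partial_uU=e^g$, which is impossible as soon as $g$ depends on $p$; letting $U$ depend on $p$ only introduces a $dp$-term. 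Intrinsically, any $\Psi$ with $\Psi^*\alpha_0=e^g\alpha_0$ must carry the Reeb field of $e^g\alpha_0$ to $\partial_u$. For $n=1$ and $g=p$ that Reeb field is $e^{-p}\bigl(\partial_x+(1+p)\partial_u\bigr)$, so $\Psi$ cannot preserve $x$. Your diagnosis that ``Moser's trick only controls $\ker\alpha$'' is also mistaken: the obstruction $\dot\alpha_t(R_t)\neq 0$ arose only because you imposed $X_t\in\ker\alpha_t$.

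\textbf{How to close the gap.} Either of two repairs works.

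(a) Keep Stage 2 but allow a Reeb component, $X_t=H_tR_t+Y_t$ with $Y_t\in\ker\alpha_t$. Then $\mcL_{X_t}\alpha_t+\dot\alpha_t=0$ becomes $dH_t+\iota_{Y_t}d\alpha_t+\dot\alpha_t=0$.
\begin{itemize}
\item Evaluated on $R_t$, this is the transport equation $R_t(H_t)=-\dot\alpha_t(R_t)$. Solve it by integrating along Reeb orbits with $H_t=0$ on $\{u=0\}$, which is transverse to every $R_t$ near $0$ because $R_t(0)=\partial_u$.
\item $Y_t$ is then fixed by nondegeneracy of $d\alpha_t$ on $\ker\alpha_t$.
\item Since $H_t$ vanishes on $\{u=0\}$ and $\dot\alpha_t$ vanishes at $0$, you get $H_t(0)=0$ and $dH_t(0)=0$, hence $X_t(0)=0$.
\end{itemize}
This yields $\psi_1^*\alpha_1=\alpha_0$ exactly, so $\phi^*\alpha_0=\alpha$ with no conformal factor.

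(b) Use the flow-box route you mention at the end. It makes Stages 1--2 unnecessary; applied after reducing to $e^g\alpha_0$, it would discard that reduction anyway, since the Reeb field changes.
\begin{itemize}
\item Take coordinates $(y,u)$ with $R_\alpha=\partial_u$ and $u(z)=0$. Then $\alpha(R_\alpha)=1$ and $\mcL_{R_\alpha}\alpha=\iota_{R_\alpha}d\alpha=0$ give $\alpha=du+\beta$, with $\beta$ a $u$-independent form on the transversal.
\item The identity $\alpha\wedge(d\alpha)^n=du\wedge(d\beta)^n$ shows that $d\beta$ is symplectic there.
\item Symplectic Darboux gives $(x,p)$ centred at $y(z)$ with $d\beta=dx^i\wedge dp_i$. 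This normalizes only $d\beta$, not $\beta$.
\item The step you omit is the Poincar\'e lemma: $\beta+p_i\,dx^i=dF$ with $F(0)=0$. Setting $\tilde u=u+F$ then gives $\alpha=d\tilde u-p_i\,dx^i$.
\end{itemize}
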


Similar to the symplectic case, the Darboux theorem implies that all contact manifolds are locally indistinguishable from $\R^{2n+1}$, i.e. there are no local invariants of contact structures. 

\begin{exmp}[Jets] A more general example of a contact manifold is the first jet bundle $J^1(\mcN)$, where $\mcN$ is an $n$-dimensional smooth base manifold. A point of $J^1(\mcN)$ records the first-order data of a local function on $\mcN$: two local functions determine the same $1$-jet at $x\in\mcN$ if they agree at $x$ and have the same differential there. In local coordinates, a jet is written $(x^i,u,p_i)$, where $x$ is the base point, $u$ is the function value, and $p_i$ are the first-derivative coordinates. Every smooth function $f:\mcN\to\R$ therefore defines a natural section
\begin{equation*}
j^1f:\mcN\to J^1(\mcN), \qquad x\mapsto (x,f(x),df_x),    
\end{equation*}
which in coordinates is $j^1f(x)=(x^i,f(x),\partial_i f(x))$. The bundle $J^1(\mcN)$ has a natural contact structure given by the contact form $\alpha = du - p_i dx^i$, for which $(j^1f)^*\alpha = 0$. We often make the useful identification $J^1(\mcN)\cong T^*\mcN \times \R$.
\end{exmp}

Following this construction, it is easy to see that $J^1(\R^n)\cong \R^{2n+1}\cong T^*\R^n\times\R$, equipped with the standard contact form $\alpha = du - p_i dx^i$. Thus, jets have global Darboux coordinates, and are the natural setting for the study of contact systems. In this work, we restrict all global approximation and splitting statements to $J^1(\R^n)$ with these global Darboux coordinates. For a general contact manifold, the same formulas should instead be read only chart-locally in a Darboux neighborhood, and therefore depend on the chosen chart; we do not address the additional patching issues that can arise when trying to glue such local constructions into a global splitting framework on a manifold with nontrivial topology. See \cref{ap:Coordinate_Free_Contact} for the corresponding coordinate-free definitions of the Reeb field, contact vector fields, and contact Hamiltonian vector fields.

In coordinates, we denote 
\begin{equation}
    z = (x,u,p) = (x^1,...,x^n,u,p_1...p_n) \in J^1(\R^n).
\end{equation}

Let $\mfX(J^1(\R^n))$ denote the space of vector fields on $J^1(\R^n)$. A vector field $X\in\mfX(J^1(\R^n))$ is called a \textbf{contact vector field} if it satisfies the Lie-derivative condition $\mathcal{L}_X\alpha = \zeta\alpha$ for some scalar function $\zeta$. The flow along a contact vector field is a one-parameter family of contactomorphisms.

Any contact flow $\Phi_X^t$ along a contact vector field $X\in\mfX(J^1(\R^n))$ is locally generated by a \textbf{contact Hamiltonian}, which is a smooth function $H:J^1(\R^n)\to\R$ via the formula
\begin{equation}
X_H = \frac{\partial H}{\partial p_i}\frac{\partial}{\partial x^i} - \qty(\frac{\partial H}{\partial x^i} + p_i\frac{\partial H}{\partial u})\frac{\partial}{\partial p_i} + \qty(p_i\frac{\partial H}{\partial p_i} - H)\pdv{u}.
\end{equation}

Under this convention, if the conformal factor of the flow $\Phi_H^t$ is defined by
\[
(\Phi_H^t)^*\alpha = e^{\lambda_t}\alpha,
\]
then
\begin{equation}
    \mathcal{L}_{X_H}\alpha = -\pdv{H}{u}\alpha\qc
    \partial_t \lambda_t(z) = -\pdv{H}{u}\qty(\Phi_H^t(z)).
\end{equation}

This correspondence identifies the tangent space at the identity of the contactomorphism group with the space of contact Hamiltonian vector fields, namely $T_{\mathrm{id}}\Cont(J^1(\R^n))\cong \mfc(J^1(\R^n))$, and therefore with the space of smooth functions on $J^1(\R^n)$.\footnote{There are different choices of coordinate-form for the contact vector field, which correspond to different choice of the reference one-form (where here we set $\alpha=du-pdx)$. This is similar to choosing which component receives a negative sign in the symplectic case. In both cases, there is a coordinate transformation (Legendre, or H\'enon respectively) that relates the different conventions.} For convenience, we will often identify $H\mapsto X_H$ and write $H$ instead of $X_H$ when there is no risk of confusion. Similarly, we identify the flow $\Phi^t_H$ with the exponential of the Hamiltonian, and write $\Phi^t_H = \Phi^t_{X_H}$.

\subsection{Subgroups and subalgebras}

We denote the group of contactomorphisms by $\Cont(J^1(\R^n))$ and its associated Lie algebra of contact vector fields by $\mfc(J^1(\R^n))$. We are interested in approximating flows, i.e. maps in the identity component of the contactomorphism group - we denote this latter subgroup by $\Cont_0(J^1(\R^n))$.

The \textbf{contact-Jacobi bracket} in terms of Hamiltonians takes the form
\begin{equation}
    \qty[H,K]=\qty{H,K}+\pdv{H}{u}\qty[K-p\pdv{K}{p}]-\pdv{K}{u}\qty[H-p\pdv{H}{p}],
\end{equation}
where $\{H,K\}$ is the Poisson bracket on $T^*\R^n$ in the convention fixed below. We will be interested in two subgroups of $\Cont(J^1(\R^n))$ and their associated Lie subalgebras, which we now define.

\subsubsection{Strict Contactomorphisms}

\textbf{Strict contactomorphisms} are the subgroup $\SCont(J^1(\R^n))\subset \Cont(J^1(\R^n))$ of contactomorphisms that preserve the contact form exactly, i.e. $\psi^*\alpha = \alpha$. The associated Lie algebra consists of \textbf{strict contact vector fields}, which are vector fields $X$ satisfying $\mathcal{L}_X\alpha = 0$, and are generated by contact Hamiltonians $H(x,p)$ that do not depend on the $u$ coordinate, i.e.

\begin{equation}
    \mfsc(J^1(\R^n)) = \qty{H\in C^\infty(J^1(\R^n)) : \pdv{H}{u} = 0}.
\end{equation}

Thus, strict contactomorphisms are generated by Hamiltonians that depend only on the base and dual fiber coordinates $(x,p)$. At the level of autonomous Hamiltonian flows, they project to symplectic Hamiltonian flows on $T^*\R^n$. We have the following lemma:

\begin{lemma}
    Let $K\in C^\infty(T^*\R^n)$, define $H(x,u,p)=K(x,p)$, and let $\Phi_H^t$ and $\Phi_K^t$ denote the corresponding contact and symplectic Hamiltonian flows, respectively. Then $\Phi_H^t$ is a strict contactomorphism for each $t$ for which the flow exists, and the following diagram commutes:
\begin{equation}
        \begin{tikzcd}
        J^1(\R^n) \arrow[r, "\Phi_H^t"] \arrow[d, "\pi_{(x,p)}"'] & J^1(\R^n) \arrow[d, "\pi_{(x,p)}"] \\
        T^*\R^n \arrow[r, "\Phi_K^t"] & T^*\R^n
        \end{tikzcd}
\end{equation}
    where $\pi_{(x,p)}:J^1(\R^n)\to T^*\R^n$ is the natural projection map. Conversely, given a symplectic Hamiltonian flow $\Phi_K^t$ on $T^*\R^n$ generated by $K$, the contact Hamiltonian $H(x,u,p)=K(x,p)$ generates a strict contact flow $\Phi_H^t$ satisfying the same commutative diagram.
\end{lemma}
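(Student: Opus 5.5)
The plan is to write out the contact Hamiltonian vector field $X_H$ for $H(x,u,p)=K(x,p)$ explicitly and read off both claims. Since $\pdv{H}{u}=0$, the general formula for $X_H$ reduces to
\begin{equation*}
X_H = \pdv{K}{p_i}\pdv{x^i} - \pdv{K}{x^i}\pdv{p_i} + \qty(p_i\pdv{K}{p_i} - K)\pdv{u}.
\end{equation*}
Strictness then follows from the identity $\mathcal{L}_{X_H}\alpha = -\pdv{H}{u}\alpha = 0$ stated earlier. In terms of the conformal factor, $\partial_t\lambda_t = -\pdv{H}{u}\circ\Phi_H^t = 0$ with $\lambda_0=0$ gives $\lambda_t\equiv 0$, so $(\Phi_H^t)^*\alpha=\alpha$ for every $t$ in the existence interval. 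If one prefers not to rely on the stated formula, a direct Cartan-formula check works: $\iota_{X_H}\alpha = -K$ and $\iota_{X_H}d\alpha = dK$, so $\mathcal{L}_{X_H}\alpha = dK - dK = 0$. Then $\frac{d}{dt}(\Phi_H^t)^*\alpha = (\Phi_H^t)^*\mathcal{L}_{X_H}\alpha = 0$.

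For the commutative diagram, I will observe that the $(x,p)$-components of $X_H$ are independent of $u$. They coincide with Hamilton's equations $\dot x = \partial_p K$, $\dot p = -\partial_x K$ for the symplectic Hamiltonian vector field of $K$ on $T^*\R^n$, with the sign convention matching the Poisson bracket fixed in the paper. So $X_H$ is $\pi_{(x,p)}$-related to $X_K$, i.e. $d\pi_{(x,p)}(X_H) = X_K\circ\pi_{(x,p)}$. Hence if $z(t)=(x(t),u(t),p(t))$ is an integral curve of $X_H$, then $\pi_{(x,p)}(z(t))$ is an integral curve of $X_K$ with the same initial projection. By uniqueness of ODE solutions, $\pi_{(x,p)}\circ\Phi_H^t = \Phi_K^t\circ\pi_{(x,p)}$ wherever both sides are defined.

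The one point needing care is domains of existence. The system is triangular: $(x,p)$ solves a closed system, and $u$ is then obtained by quadrature,
\begin{equation*}
u(t) = u_0 + \int_0^t \qty(p_i\partial_{p_i}K - K)(x(s),p(s))\,ds.
\end{equation*}
The integrand is continuous along the projected curve, so $u$ cannot blow up in finite time before $(x,p)$ does. The maximal existence interval of $\Phi_H^t(z)$ therefore equals that of $\Phi_K^t(\pi_{(x,p)}z)$, and the diagram holds for exactly the same $t$.

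The converse is the same computation read in the other direction. Given $\Phi_K^t$, define $H=K\circ\pi_{(x,p)}$. The argument above shows that $\Phi_H^t$ is strict and covers $\Phi_K^t$, and the quadrature formula gives an explicit lift $(x,p,u)\mapsto(\Phi_K^t(x,p),\,u+\int_0^t(p\,\partial_pK-K)\,ds)$. I don't expect a real obstacle here; the only delicate points are the sign conventions, which must match the paper's Poisson bracket, and the existence-interval bookkeeping.
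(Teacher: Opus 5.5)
Your proposal is correct and follows essentially the same route as the paper: write out $X_H$ for $H=K(x,p)$, read off strictness from $\mathcal{L}_{X_H}\alpha=-\pdv{H}{u}\alpha=0$, and observe that the $(x,p)$-equations are exactly Hamilton's equations for $K$, while $u$ evolves by an auxiliary ODE along the projected trajectory. Your direct Cartan-formula check ($\iota_{X_H}\alpha=-K$, $\iota_{X_H}d\alpha=dK$) and your matching of existence intervals via the quadrature for $u$ are sound refinements that the paper's proof leaves implicit.
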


Strict contactomorphisms preserve the induced volume form $\alpha\wedge (d\alpha)^n$, and therefore are a natural subgroup of contactomorphisms to consider when constructing structure-preserving integrators. Moreover, since their Hamiltonian flows project to symplectic Hamiltonian flows on $T^*\R^n$, we can leverage the rich theory of symplectic integrators \cite{hairer2006geometric} to construct high-order integrators for this subgroup. However, since strict contactomorphisms are a proper subgroup of the full contactomorphism group, we cannot expect to approximate general contact flows by compositions of strict contactomorphisms alone.

\subsubsection{Prolonged Diffeomorphisms}

\textbf{Prolonged diffeomorphisms} are the subgroup $\Diff^{(1)}(J^1(\R^n))\subset \Cont(J^1(\R^n))$ of contactomorphisms that arise as the prolongation of a diffeomorphism on the $(x,u)$ variables. The associated Lie algebra of prolonged vector fields is generated by Hamiltonians that are affine-in-$p$, that is:

\begin{align}
    \mfpd(J^1(\R^n)) &= \qty{H\in C^\infty(J^1(\R^n)) : H(x,u,p) = f(x,u)+g^i(x,u)p_i}
\end{align}
where $f,g^i$ are smooth functions on the base and fiber $(x,u)$ coordinates. Indeed, one can verify that, under such a Hamiltonian, the evolution of the $(x,u)$-component of the contact flow `decouples' from that of $p$. More formally, at the level of autonomous flows:

\begin{lemma}
    Let $H(x,u,p)=f(x,u)+g^i(x,u)p_i$, and let $\Phi_H^t$ denote the corresponding contact flow on $J^1(\R^n)$. Then there exists a flow $\varphi^t:\R^n\times\R\to\R^n\times\R$ generated by the vector field
    \[
        Y = g^i(x,u)\pdv{x^i} - f(x,u)\pdv{u}
    \]
    such that the following diagram commutes:
\begin{equation}
        \begin{tikzcd}
        J^1(\R^n) \arrow[r, "\Phi_H^t"] \arrow[d, "\pi_{(x,u)}"'] & J^1(\R^n) \arrow[d, "\pi_{(x,u)}"] \\
        \R^n\times\R \arrow[r, "\varphi^t"] & \R^n\times\R
        \end{tikzcd}
\end{equation}
    where $\pi_{(x,u)}:J^1(\R^n)\to \R^n\times\R$ is the natural projection map. In particular, each fixed-time map $\Phi_H^t$ is a prolonged diffeomorphism. Conversely, given a flow $\varphi^t$ on $\R^n\times\R$ generated by
    \[
        Y = g^i(x,u)\pdv{x^i} - f(x,u)\pdv{u},
    \]
    its first prolongation to $J^1(\R^n)$ is the contact flow generated by the Hamiltonian $H(x,u,p)=f(x,u)+g^i(x,u)p_i$.
\end{lemma}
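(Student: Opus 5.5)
The proof is a direct computation with the coordinate formula for $X_H$, followed by the standard characterization of first prolongations through the contact condition.

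\textbf{Step 1: compute the vector field.} Substitute $H=f(x,u)+g^i(x,u)p_i$ into the formula for $X_H$. We have $\partial H/\partial p_i = g^i$, so $p_i\,\partial H/\partial p_i - H = -f$. The $x$- and $u$-components of $X_H$ are therefore $\dot x^i = g^i(x,u)$ and $\dot u = -f(x,u)$. These involve only $(x,u)$, so they are exactly the components of $Y$. The $p$-component is
\[
\dot p_i = -\big(\partial_{x^i} f + p_j\partial_{x^i} g^j\big) - p_i\big(\partial_u f + p_j \partial_u g^j\big),
\]
a quadratic (Riccati-type) expression in $p$ whose coefficients are evaluated along the base trajectory.

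\textbf{Step 2: projection and commutativity.} Because the $(x,u)$-subsystem is autonomous and closed, $X_H$ is $\pi_{(x,u)}$-related to $Y$: $d\pi_{(x,u)}(X_H)=Y\circ\pi_{(x,u)}$. Related vector fields have related flows, so $\pi_{(x,u)}\circ\Phi_H^t=\varphi^t\circ\pi_{(x,u)}$ wherever both flows are defined. This is the commuting diagram.

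\textbf{Step 3: $\Phi_H^t$ is the prolongation of $\varphi^t$.} Use the characterization that the first prolongation $\mathrm{pr}\,\varphi$ of a diffeomorphism $\varphi$ of $\R^n\times\R$ is the unique map $J^1\to J^1$ that covers $\varphi$ and preserves $\ker\alpha$.

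1. $\Phi_H^t$ covers $\varphi^t$ by Step 2.
2. $\Phi_H^t$ is a contactomorphism, since it is the flow of a contact Hamiltonian field.
3. Uniqueness: a fiber point $p$ over $(x,u)$ is the same thing as the hyperplane $\ker(du-p_i\,dx^i)\subset T_{(x,u)}(\R^n\times\R)$, since a non-vertical hyperplane determines $p$. A contact map covering $\varphi$ must send this hyperplane to its image under $d\varphi$, which fixes the new $p$.

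Hence $\Phi_H^t=\mathrm{pr}\,\varphi^t$, and each fixed-time map is a prolonged diffeomorphism.

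\textbf{Step 4: converse.} Start from $Y=g^i\partial_{x^i}-f\partial_u$ with flow $\varphi^t$. Its prolongation $\mathrm{pr}\,\varphi^t$ is a one-parameter group of contactomorphisms, so its generator is a contact vector field $X$ that projects to $Y$. A contact vector field is determined by its Hamiltonian $\alpha(X)$ up to the sign convention of the paper. From the formula for $X_H$ we get $\alpha(X_H)=\dot u - p_i\dot x^i = -H$.

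Projection to $Y$ gives $\dot u = -f$ and $\dot x^i = g^i$, so $\alpha(X)=-f-p_ig^i$. The Hamiltonian is therefore $H=f+g^ip_i$, and by uniqueness $X=X_H$.

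\textbf{Main obstacle.} The step needing the most care is the uniqueness argument: that a contact map is determined by its projection to $(x,u)$. It may be cleaner to avoid the hyperplane argument and check directly that the $p$-equation above is the infinitesimal prolongation formula $\dot p_i = -D_i(f) - p_j D_i(g^j)$, with total derivative $D_i=\partial_{x^i}+p_i\partial_u$. Expanding $D_i(f)$ and $p_jD_i(g^j)$ reproduces the expression in Step 1 term by term, which settles the identification at the infinitesimal level.

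Finally, for non-complete fields all statements hold on the domain where the flows exist.
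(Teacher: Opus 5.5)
Your proof is correct, but your main argument takes a different route from the paper's. The paper's proof is a single coordinate check. It reads off $\dot x^i=g^i$, $\dot u=-f$ to get the commuting diagram. It then quotes the infinitesimal prolongation formula $Y^{(1)}=\xi^i\partial_{x^i}+\phi\,\partial_u+(D_i\phi-p_jD_i\xi^j)\partial_{p_i}$, substitutes $\xi^i=g^i$, $\phi=-f$, and verifies $Y^{(1)}=X_H$ term by term. Both the forward statement and the converse follow from this one identity, implicitly using the standard fact that the flow of $Y^{(1)}$ is the prolongation of the flow of $Y$. That computation is exactly the fallback you sketch in your last paragraph.

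Your main argument is structural instead:
\begin{itemize}
\item In Step 3 you identify $\Phi_H^t$ with $\mathrm{pr}\,\varphi^t$ directly at the level of maps. You use the characterization of the prolongation as the unique contact lift, where a fiber point is the same as a non-vertical hyperplane in $T(\R^n\times\R)$.
\item In Step 4 you recover the generator's Hamiltonian from $H=-\alpha(X)$. This needs only the $(x,u)$-components, never the Riccati equation for $p$.
\end{itemize}

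What your approach buys: the $p$-component never has to be matched, and the flow-level identification is proved rather than cited. The argument also carries over verbatim to $J^1(\mathcal{N})$ for a general base. It makes transparent why affine-in-$p$ Hamiltonians are special: any contact field whose $(x,u)$-part closes on itself is a prolongation.

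What it costs: you rely on two structural facts the paper's proof does not invoke. These are the uniqueness of contact lifts and the bijection between contact vector fields and functions via $\alpha(X)$; the paper records the latter in its appendix. The paper's computation, by contrast, is purely mechanical once the prolongation formula is granted.

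One small observation: Step 4 together with uniqueness of integral curves already implies the conclusion of Step 3. Step 3 is therefore logically redundant in your writeup, though harmless.
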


Prolonged diffeomorphisms may not preserve the contact form. Here, the momentum variables $p$ act as `derivatives' of $u$ with respect to the base variables $x$, and can be determined by the chain rule (i.e. by prolongation) from the transformation of the base variables. Thus, one may use an arbitrary integrator for the base variables, and then determine the transformation of the momentum variables by prolongation. This is a powerful tool for constructing contact integrators, as it allows us to leverage the rich theory of numerical integration for ODEs on $\R^n\times\R$ to construct integrators for this subgroup. However, since prolonged diffeomorphisms are also a proper subgroup of the full contactomorphism group, we cannot expect to approximate general contact flows by compositions of prolonged diffeomorphisms alone.

\subsubsection{Polynomial-in-\texorpdfstring{$p$}{p} Subalgebras}

Let us denote by $P^{(k)}(J^1(\R^n))$ the vector space of Hamiltonians that are polynomial in $p$ of degree at most $k$, i.e. 
\begin{equation}
    P^{(k)}(J^1(\R^n)) = \qty{H\in C^\infty(J^1(\R^n)) : H(x,u,p) = \sum_{|\alpha|\leq k} f_\alpha(x,u)p^\alpha},
\end{equation}
where $\alpha$ is a multi-index and $f_\alpha$ are smooth functions on the $(x,u)$-base. Note that $P^{(1)}(J^1(\R^n))$ coincides with the Lie algebra of affine-in-$p$ Hamiltonians, and therefore generates the subgroup of prolonged diffeomorphisms. Otherwise, the $P^{(k)}(J^1(\R^n))$ are not Lie subalgebras, but form a filtration, by \cref{lem:bracket_degree}:
\begin{equation}
    \qty[P^{(k)}(J^1(\R^n)),P^{(m)}(J^1(\R^n))] \subset P^{(k+m)}(J^1(\R^n)).
\end{equation}
Thus, the union $P$ of all $P^{(k)}(J^1(\R^n))$:
\begin{align}
    P(J^1(\R^n)) & = \bigcup_{k=0}^\infty P^{(k)}(J^1(\R^n)),
\end{align}
forms a Lie subalgebra of $\mfc(J^1(\R^n))$. The main utility of this large subalgebra is the following $C^r$ polynomial density result:

\begin{prop}[Polynomial Density]
    \label{thm:density}
    Any smooth function $f \in C^{\infty}(J^1(\R^n))$ can be approximated with respect to the $C^r$ norm, for any $r<\infty$, on compact sets by functions in $P(J^1(\R^n))$. That is, for a compact set $U\subset J^1(\R^n)$ and any $\epsilon > 0$, there exists a function $\hat{f}_\epsilon\in P(J^1(\R^n))$ such that:
    \begin{equation}
        \norm{f - \hat{f}_\epsilon}_{C^r(U)} < \epsilon.
    \end{equation} Thus, the closure satisfies 
    \begin{equation}
        C^\infty(U) \subset \overline{P(U)}^{C^r(U)},
    \end{equation}
    i.e. $P(U)$ is dense in $C^\infty(U)$ with respect to the topology induced by the $C^r(U)$ norm. Here, $P(U) = \qty{f|_U : f\in P(J^1(\R^n))}$ is the restriction of $P(J^1(\R^n))$ to $U$.

    \begin{proof}
        This is a classical result that is a $C^r$ version of the Stone-Weierstrass theorem. See \cite[Theorem 1.6.2]{narasimhan1985analysis} for the specific version.
    \end{proof}
\end{prop}

\section{Lie Density}
\label{sec:Lie_Density}

As we mention in the previous section, polynomial-in-$p$ Hamiltonians can already approximate any smooth contact Hamiltonian over a compact set. This polynomial step is primarily a formal universality device: it shows that the Lie algebra generated by strict contact Hamiltonians and prolonged Hamiltonians is large enough to recover arbitrary smooth Hamiltonians in the $C^r$ topology on compacta. From a practical numerical viewpoint, however, the more important observation is that many Hamiltonians of interest already decompose into strict and prolonged pieces, or into sums of such pieces together with a small number of commutator-generated corrections. In particular, nonpolynomial but separable terms, such as relativistic kinetic energies like $\sqrt{1+p^2}$, still fit naturally into the framework; the main obstruction is genuinely mixed $(p,u)$ dependence.

Our strategy is therefore twofold. First, we show that the Lie algebra generated by strict and prolonged Hamiltonians contains all polynomial-in-$p$ Hamiltonians, and therefore is dense in the full Lie algebra of smooth contact Hamiltonians on $J^1(\R^n)$. Second, we leverage the same generators directly in the already-separable cases that occur most naturally in applications.

Indeed, let
\begin{equation}
    \mfg \doteq \Lie\qty(\mfsc(J^1(\R^n)),\mfpd(J^1(\R^n))),
\end{equation}
be the smallest Lie subalgebra of $\mfc(J^1(\R^n))$ containing the strict and prolonged contact Hamiltonians. That is, we define $\mfg$ inductively as follows:
\begin{align*}
    \mfg_0 & = \mfsc(J^1(\R^n))+\mfpd(J^1(\R^n)) \\
    \mfg_{k+1} & = \mfg_k + [\mfsc(J^1(\R^n)),\mfg_k] + [\mfpd(J^1(\R^n)),\mfg_k]\\
    \mfg & = \bigcup_{k=0}^\infty \mfg_k
\end{align*}
where, for two vector subspaces $\mathfrak{a},\mathfrak{b}\subset \mfc(J^1(\R^n))$, we define their Lie bracket as the vector space generated by the span of the Lie brackets of their elements, i.e.
\begin{align*}
    [\mathfrak{a},\mathfrak{b}] = \mathrm{span}\qty{\qty[a,b] : a\in\mathfrak{a}, b\in\mathfrak{b}}
\end{align*}

\begin{prop}
    \label{prop:subalgebra_density} 
    The Lie algebra $\mfg$ generated by strict contact Hamiltonians and prolonged Hamiltonians contains all polynomial-in-$p$ Hamiltonians, i.e. $P^{(k)}(J^1(\R^n))\subset \mfg$ for all $k\in\N$. Therefore, the closure $\overline{\mfg}$ is dense in $\mfc(J^1(\R^n))$ in the $C^r$ topology on compact sets.
    \begin{proof}(Sketch) Any constant-coefficient monomial $\gamma p^\alpha$ is contained in the strict subalgebra. It is then sufficient to show that the operations of degree-raising, degree lowering, and multiplication by scalar functions $f(x,u)$ can be generated by Lie brackets with strict and prolonged Hamiltonians (\cref{lem:bracket_operators}). In particular, the scalar-multiplication step produces the desired leading term $f(x,u)p^\alpha$ together with lower-degree terms. These lower-degree terms can then be removed recursively, beginning with the highest remaining degree, which yields a triangular elimination procedure. Iterating this argument shows that any polynomial-in-$p$ Hamiltonian can be generated by Lie brackets of strict and prolonged Hamiltonians, and in fact by brackets of depth at most one.
    \end{proof}
\end{prop}

Our proof is slightly stronger, implying that the only element of $\mfsc$ needed is in fact $p^\alpha$ for $\abs{\alpha}=2$, which would give a graded algebraic structure, albeit at the cost of deeper brackets. On the other hand, with our current choice of generators, we can obtain any polynomial-in-$p$ Hamiltonian with brackets of depth at most one, which can be a significant advantage for numerical applications. This large generator set is useful precisely because many Hamiltonians already encountered in practice decompose naturally into strict and prolonged pieces, so no preliminary polynomial approximation is needed in those cases. The polynomial-density argument should therefore be read primarily as a universality statement, while the practical advantage of the framework comes from the breadth of already integrable pieces it admits.

\textit{Remark:} This result extends to time-dependent Hamiltonians, where $H(x,u,p,t)$ is a smooth function of time as well. In this case, the Lie algebra generated by strict and prolonged Hamiltonians contains all polynomial-in-$p$ Hamiltonians that are also smooth in time, and therefore is dense in the space of all smooth time-dependent Hamiltonians. We only touch on a time-dependent example in \cref{sec:Numerics}, however, the generalization of our results follows the standard time-dependent geometric integration theory, where the time variable is treated as an additional coordinate, and the Hamiltonian is lifted to a time-independent Hamiltonian on an extended phase space \cite{arnold2013mechanics,hairer2006geometric,guenther1996herglotz}.

\section{Local Universal Splittings}
\label{sec:Integrators}

\subsection{Qualitative Construction}

We give a qualitative description of the construction of our `universal splitting integrators', followed by a more precise discussion of error control and convergence properties.

We assume that $H\in C^{\infty}(J^1(\R^n))$ is a smooth contact Hamiltonian whose flow $\Phi^t_H$ we are interested in approximating over a compact subset $U\subset J^1(\R^n)$. There are two conceptually distinct uses of the framework. For many Hamiltonians arising in practice, one can work directly with a decomposition into strict and prolonged pieces, possibly together with a small number of depth-one commutator identities, in which case no polynomial surrogate is required. The polynomial approximation step enters primarily as a formal universality argument for generic smooth $H$ on compact sets: it guarantees that, even when no such decomposition is available a priori, one may replace $H$ by a polynomial-in-$p$ surrogate $H^{(N)}$ and then apply the Lie-density result.

The first step in this universal-construction route consists of obtaining a suitable $H^{(N)}\in P^{(N)}(U)$ for some $N<\infty$ satisfying:
\begin{align*}
    \norm{H - H^{(N)}}_{C^r(U)} < \varepsilon.
\end{align*}
This is a standard polynomial approximation problem. The existence of such a function is guaranteed by \cref{thm:density}, and a constructive approach can be established by using e.g. multivariate Bernstein or Chebyshev polynomials \cite{devore1993constructive}.\footnote{The specific choice of polynomial approximation, together with the geometry of $U$ and regularity of $H$, determines both the decay of the approximation error $\varepsilon_N$ and the derivative bounds of the surrogate $H^{(N)}$, and therefore influences the computational cost as well as the constants appearing in the final global error estimate. In practice, one may want to choose a polynomial approximation that minimizes the degree needed to achieve a given error tolerance $\varepsilon$. For the two particular choices of approximation basis, a straightforward generalization of the univariate approximation results apply to compact boxes in $\R^{2n+1}$, where we can define a box $Q = \prod_{i=1}^{2n+1} [a_i,b_i]$ such that $U\subset Q$. We can then use the multivariate Bernstein or Chebyshev polynomials defined on $Q$ and restricted to $U$.}
From a practical standpoint, this polynomial stage is the main limitation of the generic pipeline, since large degrees may be required once many variables are involved. By contrast, nonpolynomial but separable Hamiltonians, including relativistic-like kinetic terms such as $\sqrt{1+p^2}$, already fit naturally into the strict/prolonged splitting philosophy; the genuinely difficult terms are those with nontrivial mixed $(p,u)$ dependence.

Since $H^{(N)}$ is a polynomial-in-$p$ Hamiltonian, we can express it as a finite sum of monomials in $p$ with smooth coefficients in $(x,u)$. By the subalgebra density result of \cref{prop:subalgebra_density} (and its associated proof), we can express $H^{(N)}$ as a finite sum of Lie brackets of strict and prolonged Hamiltonians of depth at most one; there exists a finite set of generators $\qty{s_i\in\mfsc(J^1(\R^n))}$ and $\qty{d_i\in\mfpd(J^1(\R^n))}$, $i\in I$, such that:
\begin{equation}
    H^{(N)} = s_0 + d_0 + \sum_{i\in I} [s_i,d_i]
\end{equation}
where $I$ is a finite index set. It then remains to relate the flow $\Phi^t_{H^{(N)}}$ to the flows of the generators $s_i$ and $d_i$.  

If a Hamiltonian $H$ is expressed as a sum of $K$ Hamiltonians $H = \sum_{i=1}^K H_i$, then, standard results in geometric integration theory relying on the Baker-Campbell-Hausdorff formula (\cref{lem:bch,cor:contact_BCH}) allow us to construct a splitting integrator for $H$ in terms of the flows of the $H_i$. For example:

\begin{exmp}[Standard Integrators] Suppose that $H=A+B$. The standard Lie Trotter ($k=1$) and Strang ($k=2$) splitting integrators take the form:
\begin{align}
    \text{Lie-Trotter: }\Phi^t_H & = \exp(tA)\exp(tB) + O(t^2) \\
    \text{Strang: }\Phi^t_H & = \exp(\frac{t}{2}A)\exp(tB)\exp(\frac{t}{2}A) + O(t^3)
\end{align}
Note that while the exact error constants will depend on the specific bracket that governs the (here contact) Lie algebra, the construction is valid for any Lie algebra, and these are essentially identical to the splittings for symplectic systems.
\end{exmp}

Any two-term splitting can be applied recursively for Hamiltonians with multiple terms, and higher order integrators can be constructed by the classical Yoshida and Suzuki constructions \cite{suzuki1990fractal,yoshida1990construction}, which are based on the same principle of canceling out leading order error terms by choosing the coefficients of the symmetric compositions of second-order integrators. 

Up to this point, we therefore obtain a bona fide `outer' splitting integrator for $H^{(N)}$ in terms of the flows of $s_0,d_0$ and the flows generated by the brackets $[s_i,d_i]$.
In order to obtain a splitting in terms of the flows of our \textit{specific generators}, we can use the Baker-Campbell-Hausdorff formula once again to express the flow of $[s_i,d_i]$ as a composition of the flows of $s_i$ and $d_i$, up to an error term that depends on higher-order brackets. A simple $O(t^{3/2})$ splitting is given by:

\begin{exmp}[Commutator Splitting] Let $s_i\in\mfsc$ and $d_i\in\mfpd$. Then, we have the following splitting:
\begin{equation}
    \Phi^t_{[s_i,d_i]} = \exp(\sqrt{t} s_i)\exp(\sqrt{t} d_i)\exp(-\sqrt{t} s_i)\exp(-\sqrt{t} d_i) + O(t^{3/2}).
\end{equation}
\end{exmp}

Two standard product-formula constructions underlie the argument. First, when a Hamiltonian is written as a sum of exactly integrable pieces, the Baker-Campbell-Hausdorff formula (BCH, \cref{cor:contact_BCH}) yields the familiar Lie--Trotter, Strang, and higher-order symmetric splittings. Second, when a term is written as a commutator, its flow can be approximated by standard commutator gadgets built from the exact subflows of the generators. Since the representation of $H^{(N)}$ above involves only sums of generators and depth-one commutator terms, recursive use of these two templates yields arbitrary-order contact one-step maps:

\begin{prop}[High-Order Splittings from the Depth-One Representation]
    \label{prop:Lie_Splitting_Integrators}
    Let
    \begin{equation}
        H^{(N)} = s_0 + d_0 + \sum_{i\in I}[s_i,d_i],
        \qquad s_i\in\mfsc(J^1(\R^n)), \quad d_i\in\mfpd(J^1(\R^n)),
    \end{equation}
    be given in the depth-one representation above, and let $U\subset J^1(\R^n)$ be compact.
    Assume that, for all sufficiently small positive and negative times required by the composition coefficients, the exact subflows of each generator $s_i$ and $d_i$ are well-defined on a neighborhood of $U$, and that all finite compositions used below remain in a fixed compact subset of that neighborhood.

    Then, for any order $k\in\N$, there exists a contact one-step map $\Psi^{(k)}_{N,h}$
    given by a finite composition of exact flows of the strict and prolonged generators such that
    \begin{equation}
        \Psi^{(k)}_{N,h} = \Phi_{H^{(N)}}^h + O(h^{k+1})
    \end{equation}
    in the $C^r(U)$ topology for every finite $r$, as $h\to 0$.
\end{prop}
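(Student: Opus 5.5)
We will build $\Psi^{(k)}_{N,h}$ in two layers: an outer splitting that treats each summand of $H^{(N)}$ as if its flow were exactly available, and an inner layer that replaces the flow of each commutator $[s_i,d_i]$ by a high-order product of exact flows of $s_i$ and $d_i$. Throughout we work with the formal BCH calculus of \cref{lem:bch,cor:contact_BCH}. On the compact set $U$, and under the standing assumption that all compositions stay in a fixed compact neighbourhood, this calculus is justified in $C^r$. Concretely, for smooth vector fields the Taylor expansion of a finite composition of flows in the time parameters, truncated at any order, has a remainder bounded in $C^r(U)$ by derivatives of the generators on that compact neighbourhood.

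\textbf{Outer layer.} Write $H^{(N)}=\sum_{j=0}^{M}A_j$ with $A_0=s_0$, $A_1=d_0$ and $A_{j}=[s_i,d_i]$ for the remaining indices. The symmetric Strang composition
\[
S_h=\exp(\tfrac h2A_0)\cdots\exp(\tfrac h2A_{M-1})\exp(hA_M)\exp(\tfrac h2A_{M-1})\cdots\exp(\tfrac h2A_0)
\]
is time-reversible, $S_hS_{-h}=\mathrm{id}$. Hence its BCH logarithm contains only odd powers of $h$, and $S_h=\Phi^h_{H^{(N)}}+O(h^3)$. Applying the Yoshida--Suzuki triple-jump recursion $S^{[2m+2]}_h=S^{[2m]}_{\gamma_1h}S^{[2m]}_{\gamma_0h}S^{[2m]}_{\gamma_1h}$, with $2\gamma_1+\gamma_0=1$ and $2\gamma_1^{2m+1}+\gamma_0^{2m+1}=0$, gives a symmetric composition of order $\ge k$. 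This uses negative times, which the hypotheses allow. Each factor is the exact flow of a generator, or of a commutator over some time $ch$ with $c$ a fixed constant.

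\textbf{Inner layer.} This is the main obstacle. We need, for each pair $(s,d)$ and each small $\tau$ of either sign, a finite product $C_\tau$ of exact flows of $s$ and $d$ with $C_\tau=\exp(\tau[s,d])+O(|\tau|^{(k+1)})$ in $C^r(U)$.

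We start from the group commutator
\[
G_\delta=\exp(\delta s)\exp(\delta d)\exp(-\delta s)\exp(-\delta d).
\]
By BCH, $\log G_\delta=\delta^2[s,d]+\sum_{m\ge3}\delta^m R_m$, where each $R_m$ is a fixed nested bracket of $s,d$. Negative $\tau$ is handled by swapping the roles of $s$ and $d$. The palindromic product $G_{\delta}^{\mathrm{sym}}=G_{\delta/\sqrt2}\,\widetilde G_{\delta/\sqrt2}$, where $\widetilde G$ is $G$ with the order of factors reversed and signs adjusted, removes the $\delta^3$ term and yields a symmetric expansion in $\delta^2$. We then raise the order by the standard commutator-extrapolation scheme: compose scaled copies $G^{\mathrm{sym}}_{c_\ell\delta}$ with coefficients chosen so that $\sum c_\ell^2=1$ and the higher moments vanish. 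The extra bracket terms generated by BCH among the copies are themselves of higher order and are cancelled inductively, as in the classical construction of arbitrarily high-order commutator approximations (Suzuki).

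Setting $\delta=\sqrt{|\tau|}$ gives error $O(|\tau|^{q/2})$ for a product of order $q$ in $\delta$. We therefore choose $q\ge 2k+2$. Substituting $C_{ch}$ for each $\exp(ch[s_i,d_i])$ in $S^{[2m]}_h$ then perturbs the total map by $O(h^{k+1})$. This uses stability of compositions of $C^r$-close maps on the compact neighbourhood: the composition is Lipschitz in $C^r$, with constants depending on bounded derivatives of the finitely many flows. So $\Psi^{(k)}_{N,h}=\Phi^h_{H^{(N)}}+O(h^{k+1})$.

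Finally, every factor is a contactomorphism, since each is an exact flow of a contact Hamiltonian. The composition is therefore a contact one-step map.
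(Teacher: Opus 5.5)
\textbf{The inner-layer gadget construction fails.} Your two-layer architecture matches the paper's sketch: an outer Strang plus Yoshida--Suzuki composition that treats the flows $\exp(ch[s_i,d_i])$ as available, then substitution of commutator gadgets, a $C^r$ stability argument, and closure of contactomorphisms under composition. The outer layer, the choice $q\ge 2k+2$, and the substitution step are all fine. The problem is in the inner layer, where you go beyond the paper's bare appeal to ``standard commutator gadgets'' and give an explicit recipe. That recipe does not work, for two reasons.

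\emph{Palindrome versus evenness.} A genuinely palindromic word $w(\delta)$ in the flows $e^{\pm\delta s},e^{\pm\delta d}$ satisfies $w(\delta)^{-1}=w(-\delta)$. Its logarithm is therefore odd in $\delta$ and contains no $\delta^2[s,d]$ term at all. Any product with leading term $\delta^2[s,d]$ must break the palindrome, and then nothing forces an even expansion. Take the natural choice $\widetilde G_\epsilon=G_{-\epsilon}$, which does cancel the cubic term. Writing $\log G_\epsilon=\sum_m\epsilon^mR_m$ with $R_m$ homogeneous of degree $m$, BCH gives
\[
\log\bigl(G_\epsilon G_{-\epsilon}\bigr)=2\epsilon^2R_2+2\epsilon^4R_4-\epsilon^5[R_2,R_3]+O(\epsilon^6).
\]
So an odd term, generically nonzero, survives at order five.

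\emph{The moment conditions.} Even an exactly even expansion $\delta^2C+\delta^4E_4+\cdots$ could not be upgraded as you describe. Composing real rescalings $G^{\mathrm{sym}}_{c_\ell\delta}$ gives the $\delta^4$ coefficient $\bigl(\sum_\ell c_\ell^4\bigr)E_4$, and the system $\sum_\ell c_\ell^2=1$, $\sum_\ell c_\ell^4=0$ has no real solution. Suzuki's recursion needs a base method that is time-symmetric in the actual time variable $\tau=\delta^2$, so that negative steps enter with the right error signs. Your construction does not supply one.

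\textbf{How to repair it.} The statement you need is true (the paper simply cites it), and an elementary induction proves it.
\begin{enumerate}
\item Start from $P_\lambda=G_\lambda$, whose logarithm is $\lambda^2[s,d]+O(\lambda^3)$.
\item Suppose a finite product $P_\lambda$ of exact $s$- and $d$-flows has $\log P_\lambda=\lambda^2[s,d]+\lambda^mE_m+O(\lambda^{m+1})$, with $E_m$ a homogeneous Lie polynomial of degree $m$. Write $E_m=\sum_j a_jB_j$, where the $B_j$ are left-normed brackets of length $m$ in $s,d$.
\item For each $j$, append to $P_\lambda$ the corresponding nested group commutator of flows with time parameter $|a_j|^{1/m}\lambda$, swapping the innermost pair if a sign change is needed. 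Its logarithm is $-a_j\lambda^mB_j+O(\lambda^{m+1})$.
\item All BCH cross terms, including those against $\lambda^2[s,d]$, are $O(\lambda^{m+2})$. The enlarged product therefore has error $O(\lambda^{m+1})$.
\item After finitely many steps the error is $O(\lambda^{2k+2})=O(|\tau|^{k+1})$ with $\lambda=\sqrt{|\tau|}$.
\end{enumerate}
An equivalent route is to argue in the free step-$q$ nilpotent group on two generators. That group is generated by the one-parameter subgroups of its generators, so $\exp([S,D])$ is an exact finite product of them there, and the dilation automorphism supplies the $\lambda$-scaling. With this inner layer, the rest of your argument goes through unchanged.
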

\begin{proof}[Sketch]
    The construction proceeds in two layers.

    First, view
    \begin{equation*}
        H^{(N)} = s_0 + d_0 + \sum_{i\in I}[s_i,d_i]
    \end{equation*}
    as a sum of exactly integrable pieces, namely the generator flows of $s_0$, $d_0$, and the commutator flows $\Phi^h_{[s_i,d_i]}$. For such sums, the BCH formula in the contact setting, as recorded in \cref{cor:contact_BCH}, yields the usual Lie--Trotter, Strang, and higher-order symmetric product formulas. In particular, by the standard Yoshida--Suzuki recursive constructions \cite{suzuki1990fractal,yoshida1990construction}, one obtains an `outer' splitting of arbitrary order for the flow of $H^{(N)}$, provided the commutator flows $\Phi^h_{[s_i,d_i]}$ are themselves available.

    Second, for each commutator term $[s_i,d_i]$, one applies BCH once again at the level of the generators $s_i$ and $d_i$. Standard commutator gadgets built from the exact subflows of $s_i$ and $d_i$ approximate $\Phi^h_{[s_i,d_i]}$ to any prescribed order; see also the contact-specific low-order constructions in \cite[Propositions 3.2, 3.3]{zadra2021vdp}. Replacing every commutator flow in the outer splitting by such a gadget produces a finite composition involving only exact flows of strict and prolonged generators.

    By construction, the BCH expansion of this full composition agrees with $hH^{(N)}$ through order $k$, so the resulting one-step map satisfies
    \begin{equation*}
        \Psi^{(k)}_{N,h} = \Phi^h_{H^{(N)}} + O(h^{k+1})
    \end{equation*}
    on the fixed compact set $U$. Since every factor in the composition is a contactomorphism, the full composition is again a contactomorphism.
\end{proof}

\textit{Remark.} \cref{prop:Lie_Splitting_Integrators} is stated for compositions of exact strict and prolonged subflows. In practical realizations, these exact factors may be replaced by analytic exact-contact one-step maps whose local errors are chosen so that the aggregate realization error remains of order $O(h^{k+1})$ in the final composition. In that case, the resulting method is again an analytic exact-contact one-step map, and the additional realization error is absorbed into the same local truncation estimate. Consequently, under the analyticity assumptions invoked below, the same backward-error and modified conformal-factor conclusions apply to the realized method. We do not formalize this extension here for nonanalytic realizations or for lifts that are only approximately strict.

\subsection{Error Control}

We now proceed with a formal statement of the error estimates associated with our proposed universal splitting integrators. Let $U\subset J^1(\R^n)$ be a compact set, with a Hamiltonian $H\in C^{r+1}(U)$, and let
$H^{(N)}\in P^{(N)}(J^1(\R^n))$ be a polynomial-in-$p$ approximation satisfying:
\begin{equation}
    \varepsilon_N := \norm{H-H^{(N)}}_{C^{r+1}(U)}.
\end{equation}
There are two distinct ingredients in the argument. First, for the polynomial surrogate $H^{(N)}$, the depth-one representation together with the BCH-based product-formula constructions from \cref{lem:bch,cor:contact_BCH,prop:Lie_Splitting_Integrators} yields a high-order contact one-step map with controlled local error. Second, the exact flows of $H$ and $H^{(N)}$ are compared by the Gr\"onwall estimate of \cref{prop:gronwall}. The next theorem simply combines these two bounds in sequence.

\begin{theorem}[Local Universal Approximation]
Let $U\subset J^1(\R^n)$ be compact, let $H\in C^{r+1}(U)$, and let
\begin{equation}
    \varepsilon_N := \norm{H-H^{(N)}}_{C^{r+1}(U)},
    \qquad H^{(N)}\in P^{(N)}(J^1(\R^n)).
\end{equation}
Assume that the flows of $H$ and $H^{(N)}$ exist on $[0,T]$ and remain in a compact subset $U'\subset U$. By \cref{prop:subalgebra_density}, choose a representation
\begin{equation}
    H^{(N)} = s_0 + d_0 + \sum_{i\in I}[s_i,d_i],
    \qquad s_i\in\mfsc(J^1(\R^n)), \quad d_i\in\mfpd(J^1(\R^n)).
\end{equation}
Assume moreover that, for all sufficiently small positive and negative times required by the composition coefficients, the exact subflows of each generator $s_i$ and $d_i$ are well-defined on a neighborhood of $U'$, and that all finite compositions used in the construction remain in a fixed compact subset of that neighborhood.

Then, for any target order $p\in\N$, there exists a contact one-step map $\Psi^{(p)}_{N,h}$, given by a finite composition of exact flows of the strict and prolonged generators, and a constant $C_{\mathrm{split}}(N,p,T,r)>0$ such that
\begin{equation}
    \norm{\Phi_{H^{(N)}}^{nh} - \qty(\Psi^{(p)}_{N,h})^n}_{C^r(U)}
    \leq C_{\mathrm{split}}(N,p,T,r)\, h^p
\end{equation}
for all sufficiently small $h>0$ and all $nh\leq T$.

Consequently, there exists a constant $C_{\mathrm{approx}}(H,U,T,r)>0$ such that
\begin{equation}
    \norm{\Phi_H^{nh} - \qty(\Psi^{(p)}_{N,h})^n}_{C^r(U)}
    \leq C_{\mathrm{approx}}(H,U,T,r)\,\varepsilon_N
    + C_{\mathrm{split}}(N,p,T,r)\, h^p
\end{equation}
for all sufficiently small $h>0$ and all $nh\leq T$.

In particular, given any $\varepsilon>0$, one may first choose $N$ so that
\begin{equation}
    C_{\mathrm{approx}}\,\varepsilon_N < \varepsilon/2,
\end{equation}
and then choose $h>0$ so that
\begin{equation}
    C_{\mathrm{split}}\,h^p < \varepsilon/2.
\end{equation}
Hence, for any $\varepsilon>0$ and any target order $p$, there exist $N<\infty$ and $h>0$ such that
\begin{equation}
    \norm{\Phi_H^{nh} - \qty(\Psi^{(p)}_{N,h})^n}_{C^r(U)} < \varepsilon
\end{equation}
for all $nh\leq T$.
\end{theorem}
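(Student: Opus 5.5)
The plan is the usual ``local error plus stability implies global error'' argument, applied twice in sequence: once for the splitting map against the exact flow of $H^{(N)}$, and once for the exact flow of $H^{(N)}$ against the exact flow of $H$. The two bounds are then combined by the triangle inequality in $C^r(U)$.

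\textbf{Step 1: local error of the splitting map.} Fix the depth-one representation of $H^{(N)}$ provided by \cref{prop:subalgebra_density}. By \cref{prop:Lie_Splitting_Integrators}, applied with $k=p$ on a compact neighborhood $U''$ of $U'$, there is a finite composition $\Psi^{(p)}_{N,h}$ of exact strict and prolonged flows satisfying
\[
\norm{\Psi^{(p)}_{N,h}-\Phi^h_{H^{(N)}}}_{C^r(U'')}\le C_{\mathrm{loc}}h^{p+1}.
\]
The constant $C_{\mathrm{loc}}$ depends on $C^{r+p+1}$ bounds of the generators $s_i,d_i$ on $U''$. These bounds are finite, because the generators are smooth and all compositions stay in a fixed compact set by hypothesis. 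The map $\Psi^{(p)}_{N,h}$ is a contactomorphism because each of its factors is one.

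\textbf{Step 2: from local to global error.} Write the global error as the telescoping sum
\[
\Phi^{nh}-\Psi^n=\sum_{j=0}^{n-1}\Bigl(\Phi^{(n-j-1)h}\circ\Phi^h\circ\Psi^{j}-\Phi^{(n-j-1)h}\circ\Psi\circ\Psi^{j}\Bigr),
\]
where $\Phi=\Phi_{H^{(N)}}$ and $\Psi=\Psi^{(p)}_{N,h}$. Each summand is controlled in $C^r$ by three ingredients:
\begin{itemize}
\item the $C^{r+1}$ norm of the exact flow $\Phi^{s}$ for $s\le T$, which is bounded by Gr\"onwall applied to the variational equations up to order $r$;
\item the local error from Step 1;
\item uniform $C^r$ bounds on the iterates $\Psi^j$, obtained by induction, since $\Psi=\mathrm{id}+O(h)$ in $C^{r+1}$.
\end{itemize}
The chain rule for compositions (Fa\`a di Bruno) converts these into a bound $C\,h^{p+1}$ per term. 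Summing at most $T/h$ terms gives $C_{\mathrm{split}}h^p$.

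I expect this step to be the main technical obstacle. The $C^r$ norm of a composition is not simply multiplicative, so the constants must be tracked carefully. One also needs $h$ small enough that the iterates remain in the compact set where the bounds hold. I would handle this with a bootstrap: assume the iterates stay within distance $\delta$ of the exact orbit, derive the error bound under that assumption, and then shrink $h$ so that the bound itself is smaller than $\delta$, closing the argument.

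\textbf{Step 3: comparing exact flows.} Compare $\Phi_H^t$ with $\Phi_{H^{(N)}}^t$ using \cref{prop:gronwall}. The vector field $X_H$ involves first derivatives of $H$, so
\[
\norm{X_H-X_{H^{(N)}}}_{C^r(U)}\le c\,\varepsilon_N ,
\]
with $c$ depending on bounds for $p$ on $U$. Differentiating the flow equations $r$ times and applying Gr\"onwall on $[0,T]$ to the difference of the variational systems gives
\[
\norm{\Phi_H^t-\Phi_{H^{(N)}}^t}_{C^r(U)}\le C_{\mathrm{approx}}\varepsilon_N .
\]
Here $C_{\mathrm{approx}}$ depends on $T$ and on $C^{r+1}$ bounds of $H$ on $U'$, and these bounds also control $H^{(N)}$ once $\varepsilon_N\le1$.

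\textbf{Step 4: combining.} Evaluating at $t=nh$ and applying the triangle inequality to Steps 2 and 3 gives the stated estimate. The final statement then follows by choosing $N$ first so that $C_{\mathrm{approx}}\varepsilon_N<\varepsilon/2$; this is possible by \cref{thm:density}. With $N$ fixed, $C_{\mathrm{split}}(N,p,T,r)$ is a fixed constant, so one then chooses $h$ so that $C_{\mathrm{split}}h^p<\varepsilon/2$.
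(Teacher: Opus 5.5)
Your proposal is correct and follows the paper's proof essentially step for step: the local error from \cref{prop:Lie_Splitting_Integrators}, the standard local-to-global argument (which you make explicit with the telescoping sum and a bootstrap), the Gr\"onwall comparison of \cref{prop:gronwall}, the triangle inequality, and the order ``fix $N$, then $h$''. One caution, which applies equally to the paper's \cref{prop:gronwall}: a $C^r$ flow comparison that is linear in $\varepsilon_N$ in general needs a Lipschitz bound on $D^rX$ for one of the two fields, which is one derivative more than $H\in C^{r+1}$ gives, so your claim that $C^{r+1}$ bounds plus $\varepsilon_N\le 1$ make $C_{\mathrm{approx}}$ independent of $N$ requires either assuming $H\in C^{r+2}$ (and then using the Lipschitz bound on $D^rX_H$), or, for the final $\varepsilon$-statement only, replacing the linear rate by continuous dependence of $C^r$ flows on $C^r$ vector fields.
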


\textit{Remark.} The two constants in the theorem encode different sources of error. The approximation constant $C_{\mathrm{approx}}$ is the Gr\"onwall-type comparison constant from \cref{prop:gronwall} applied to the pair $(H,H^{(N)})$; in particular, it depends on the time horizon $T$, the regularity index $r$, the confinement neighborhood on which both flows remain defined, and uniform Lipschitz and derivative bounds for the corresponding contact vector fields. The splitting constant $C_{\mathrm{split}}$ is the standard global-error constant for the chosen order-$p$ one-step composition applied to the fixed depth-one representation of $H^{(N)}$; accordingly, it depends on $T$, $r$, the same confinement neighborhood, the target order $p$, the number and size of the generators in the chosen representation, and uniform bounds on the subflows and commutator data entering the BCH-based construction. Thus, the dependence on $N$ enters through the chosen polynomial surrogate $H^{(N)}$ and its representation, rather than through an independent universal constant.

\begin{proof}
The proof is a concatenation of three standard steps.

First, fix the chosen depth-one representation of $H^{(N)}$. By \cref{prop:Lie_Splitting_Integrators}, whose construction is based on the BCH/product-formula arguments recalled in \cref{lem:bch,cor:contact_BCH}, for any target order $p$ there exists a contact one-step map $\Psi^{(p)}_{N,h}$ with local error
\begin{equation}
    \Psi^{(p)}_{N,h} = \Phi_{H^{(N)}}^h + O(h^{p+1})
\end{equation}
in the $C^r(U')$ topology.

Second, standard local-to-global convergence theory for one-step methods on compact time intervals upgrades this local estimate to the global splitting bound
\begin{equation}
    \norm{\Phi_{H^{(N)}}^{nh} - \qty(\Psi^{(p)}_{N,h})^n}_{C^r(U)}
    \leq C_{\mathrm{split}}(N,p,T,r)\, h^p
\end{equation}
for all sufficiently small $h>0$ and all $nh\leq T$. This is the error coming from replacing the exact flow of the polynomial Hamiltonian $H^{(N)}$ by the splitting map, and the corresponding constant depends on the fixed depth-one representation, the target order, the confinement neighborhood, and the usual stability bounds entering one-step convergence theory.

Third, the error coming from replacing the original Hamiltonian $H$ by the polynomial surrogate $H^{(N)}$ is controlled by the Gr\"onwall-type comparison estimate of \cref{prop:gronwall}. Applied to the pair $H$ and $H^{(N)}$, it yields
\begin{equation}
    \norm{\Phi_H^{nh} - \Phi_{H^{(N)}}^{nh}}_{C^r(U)}
    \leq C_{\mathrm{approx}}(H,U,T,r)\,\varepsilon_N
\end{equation}
for all sufficiently small $h>0$ and all $nh\leq T$.
Here one simply applies \cref{prop:gronwall} to the pair $(H,H^{(N)})$, with comparison size $\varepsilon_N = \norm{H-H^{(N)}}_{C^{r+1}(U)}$.

Combining the splitting estimate and the approximation estimate by the triangle inequality,
\begin{equation*}
    \norm{\Phi_H^{nh} - \qty(\Psi^{(p)}_{N,h})^n}_{C^r(U)}
    \leq
    \norm{\Phi_H^{nh} - \Phi_{H^{(N)}}^{nh}}_{C^r(U)}
    +
    \norm{\Phi_{H^{(N)}}^{nh} - \qty(\Psi^{(p)}_{N,h})^n}_{C^r(U)},
\end{equation*}
we obtain
\begin{equation*}
    \norm{\Phi_H^{nh} - \qty(\Psi^{(p)}_{N,h})^n}_{C^r(U)}
    \leq C_{\mathrm{approx}}(H,U,T,r)\,\varepsilon_N
    + C_{\mathrm{split}}(N,p,T,r)\, h^p,
\end{equation*}
which is the claimed bound.
\end{proof}

The next corollary is stated in the main text because, in the contact setting, conformal-factor tracking plays the role that modified-energy control plays in the symplectic setting. It is a direct consequence of the standard analytic BEA and conformal-tracking estimates recorded in the appendix, and is included here to make that contact-specific implication explicit.

\begin{corollary}[Modified Conformal Factor]
Assume in addition that $H^{(N)}$ is real analytic and that the one-step map
$\Psi^{(p)}_{N,h}$ above is analytic of local order $p$ for the Hamiltonian
$H^{(N)}$. Then, by \cref{thm:analytic_BEA,thm:conformal_tracking}, there exists a
modified Hamiltonian
\begin{equation}
    \tilde{H}_{N,h} = H^{(N)} + O(h^p)
\end{equation}
whose flow shadows $\Psi^{(p)}_{N,h}$ on fixed finite time intervals, and whose conformal
factor $\tilde{\lambda}_{N,h}(t)$ satisfies
\begin{equation}
    \tilde{\lambda}_{N,h}(t) = \lambda_N(t) + O(h^p t),
    \qquad 0\leq t\leq T,
\end{equation}
where $\lambda_N(t)$ is the conformal factor of the exact flow of $H^{(N)}$.

If, moreover, the polynomial approximation is chosen so that
$\varepsilon_N = O(h^p)$, then the same estimate compares directly to the conformal
factor of the original Hamiltonian $H$:
\begin{equation}
    \tilde{\lambda}_{N,h}(t) = \lambda_H(t) + O(h^p t).
\end{equation}
\end{corollary}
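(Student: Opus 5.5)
The plan is to read the corollary as a chain of three estimates: a backward-error step, a comparison of conformal factors via their time-integral representation, and a triangle inequality through the original Hamiltonian. First, I would invoke \cref{thm:analytic_BEA}. It applies because $H^{(N)}$ is real analytic and $\Psi^{(p)}_{N,h}$ is an analytic one-step map of local order $p$, i.e.\ $\Psi^{(p)}_{N,h}=\Phi^h_{H^{(N)}}+O(h^{p+1})$. The theorem gives a truncated modified vector field whose time-$h$ flow agrees with $\Psi^{(p)}_{N,h}$ up to an exponentially small (or arbitrarily high order) remainder on a complex neighbourhood of $U'$. Since each factor of $\Psi^{(p)}_{N,h}$ is an exact-contact flow, the modified vector field is contact at every order of the BEA series. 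Each coefficient is built from nested contact-Jacobi brackets of the generators $s_i,d_i$, and brackets of contact Hamiltonians are contact Hamiltonians. So the modified field is $X_{\tilde H_{N,h}}$ for some Hamiltonian $\tilde H_{N,h}$, and local order $p$ forces $\tilde H_{N,h}=H^{(N)}+O(h^p)$ in $C^{r+1}$ on the relevant compact set.

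Second, I would compare conformal factors using $\partial_t\lambda_t=-\partial_u H(\Phi^t_H)$. Integrating from $0$ gives
\begin{equation*}
\tilde\lambda_{N,h}(t)-\lambda_N(t)=-\int_0^t\Big(\partial_u\tilde H_{N,h}(\Phi^s_{\tilde H_{N,h}})-\partial_u H^{(N)}(\Phi^s_{H^{(N)}})\Big)\,ds .
\end{equation*}
I would split the integrand as
\begin{equation*}
\big[\partial_u\tilde H_{N,h}-\partial_u H^{(N)}\big](\Phi^s_{\tilde H_{N,h}})+\big[\partial_u H^{(N)}(\Phi^s_{\tilde H_{N,h}})-\partial_u H^{(N)}(\Phi^s_{H^{(N)}})\big].
\end{equation*}
The first bracket is $O(h^p)$ uniformly. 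The second is bounded by $\mathrm{Lip}(\partial_u H^{(N)})$ times the flow discrepancy, and \cref{prop:gronwall} applied to the pair $(\tilde H_{N,h},H^{(N)})$ bounds that discrepancy by $O(h^p)$ on $[0,T]$. Integrating over $[0,t]$ gives $O(h^pt)$, with the constant absorbing $e^{LT}$. This is the content of \cref{thm:conformal_tracking}, so I would simply cite it once the hypotheses are checked.

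Third, for the comparison with $H$, I would write
\begin{equation*}
\lambda_N(t)-\lambda_H(t)=-\int_0^t\Big(\partial_u H^{(N)}(\Phi^s_{H^{(N)}})-\partial_u H(\Phi^s_H)\Big)\,ds
\end{equation*}
and apply the same split with $\varepsilon_N$ in place of $h^p$. The difference $\partial_u H^{(N)}-\partial_u H$ is controlled by $\varepsilon_N$ because the $C^{r+1}$ norm includes first derivatives. The flow difference is at most $C_{\mathrm{approx}}\varepsilon_N$ by \cref{prop:gronwall}. Hence $\lambda_N-\lambda_H=O(\varepsilon_N t)=O(h^pt)$, and the triangle inequality finishes the proof.

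The main obstacle I expect is the first step: checking that the hypotheses of the analytic BEA theorem hold uniformly and that the modified field is genuinely contact Hamiltonian. Analyticity must hold on a complex neighbourhood of the confinement set, and all composition factors must stay inside it. I would handle this by working on the compact neighbourhood assumed in the preceding theorem. There I would argue contactness order by order from the BCH expansion of \cref{cor:contact_BCH}, since every term is a Lie polynomial in $s_i,d_i$. This avoids any separate argument that a vector field whose flow is $O(h^N)$-close to contact flows must itself be contact.
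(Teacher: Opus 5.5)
Your proposal is correct and follows essentially the paper's route: \cref{thm:analytic_BEA} gives a contact modified Hamiltonian $\tilde H_{N,h}=H^{(N)}+O(h^p)$, the integral formula for the conformal factor plus the Gr\"onwall comparison of \cref{thm:conformal_tracking} gives the first estimate, and the same comparison for the pair $(H^{(N)},H)$ with $\varepsilon_N$ in place of $h^p$ gives the second, while you make explicit two points the paper leaves implicit (contactness of the modified field via the BCH/Lie-polynomial structure, and the $\lambda_N$ versus $\lambda_H$ step). One caveat you inherit from the paper is that $\varepsilon_N=O(h^p)$ forces $N$ to grow as $h\to 0$ (unless $H$ is itself polynomial in $p$ on $U$), so your final triangle inequality yields an $h$-uniform $O(h^p t)$ only if the BEA-derived constants and step-size threshold in the first estimate are uniform in $N$, which neither argument addresses.
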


\section{Numerics}
\label{sec:Numerics}

\subsection{Lifting Integrators}

The previous section was abstract: after approximating a target Hamiltonian by a polynomial-in-$p$ surrogate and expressing that surrogate in depth-one form, we obtained high-order contact one-step maps from finite compositions of exact strict and prolonged subflows, together with the corresponding modified conformal-factor conclusion in the analytic setting. We now turn to a separate practical question: how can these two classes of contact maps be realized numerically? In this section, we explain in practice how strict contactomorphisms can be obtained by lifting symplectic integrators on $T^*\R^n$, and how prolonged diffeomorphisms can be obtained by lifting ODE integrators on $\R^{n+1}$ to the jet space $J^1(\R^n)$.

These constructions should be viewed as an implementation layer for the abstract framework above, rather than as a second approximation theorem. When exact generating functions or exact prolongations are available, the realized maps fall directly within the preceding framework; more generally, analytic exact-contact realizations of sufficiently high local order are absorbed into the same local truncation estimate, and hence into the same backward-error and modified conformal-factor picture. By contrast, realizations that are only approximately strict, or otherwise fall outside the analytic exact-contact setting, may still be useful in practice but are not covered by the formal theorem as stated. We then present a range of numerical examples, illustrating these realizations on low-dimensional contact Hamiltonian systems and comparing them to standard non-structure-preserving integrators.

\subsubsection{Lifting Symplectic Integrators}

Let $\varphi:(x,p)\mapsto(\bar{x},\bar{p})$ be a symplectomorphism on $(T^*\R^n, dx\wedge dp)$, and suppose that $G(x,p)$ is its associated generating function, satisfying: 
\begin{equation}
    \varphi^*(\bar{p} d\bar{x}) - pdx = dG.
\end{equation}
Then, the transformation $\Phi:(x,u,p)\mapsto(\bar{x},\bar{u},\bar{p})$ defined by
\begin{equation}
    \Phi(x,u,p) = (\bar{x}, u + G(x,p), \bar{p})
\end{equation}
is a strict contactomorphism on $J^1(\R^n)$. Thus, one may `lift' a symplectic integrator on $T^*\R^n$ to a strict contactomorphism on $J^1(\R^n)$ by first computing the flow on $(x,p)$ and then using the generating function of the symplectic integrator to define the transformation of the $u$ variable.

The generating function $G$ is unique, up to an additive constant, and can be computed explicitly for many standard symplectic integrators, such as the symplectic Euler and Störmer-Verlet methods, which allows us to construct explicit strict contactomorphisms corresponding to these integrators. More generally, if one defines a symplectic transformation via an implicit generating function (of type I-IV), the corresponding strict contactomorphism can be defined implicitly as well. This allows us to construct implicit contact integrators that preserve the contact structure exactly, while still leveraging the rich theory of symplectic integrators.

\begin{exmp}[Legendre-like Maps] A simple example of a symplectic map is the `Henon-like' map of \cite{turaev2002polynomial}, defined as:
\begin{align}
    H_{[f,\eta]}:\mqty(x\\p)\mapsto \mqty(p+\eta\\-x+\grad f(p))
\end{align}
where $f:\R^n\to\R$ is a smooth function and $\eta\in\R^n$ is a constant vector. The extension of this map to a strict contactomorphism on $J^1(\R^n)$ is given by:
\begin{equation}
    L_{[f,\eta,k]}:\mqty(x\\u\\p)\mapsto \mqty(p+\eta\\u -xp+f(p)+k\\-x+\grad f(p))
\end{equation}
where $k$ is an arbitrary constant. Observe that for $G=-xp+f(p)+k$ we have
\begin{align*}
    dG &= \grad f(p) dp - x dp - p dx \\
       &= (-x+\grad f(p)) dp - pdx \\
       &= H_{[f,\eta]}^*\qty(\bar{p} d\bar{x}) - p dx
\end{align*}
as required. The extended transformation resembles a `Legendre-like' transformation \cite{kevrekidis2024neural}.
\end{exmp}

When an exact or discrete generating function is available, the lifted map is a strict contactomorphism. If one instead reconstructs the generating function numerically, the resulting lift is generally only approximately strict, though it may still be useful in practice.

\subsubsection{Lifting ODE Integrators}

Let $\varphi:(x,u)\mapsto(\bar{x},\bar{u})$ be a diffeomorphism on $\R^n\times\R$. On a jet bundle, the variables $p$ can be interpreted as `derivatives' of $u$ with respect to the base $x$, and therefore the transformation of $p$ is determined by the chain rule from the transformation of $(x,u)$. More formally, the transformation $\:(x,u,p)\mapsto(\bar{x}=\chi(x,u),\bar{u}=\psi(x,u),\bar{p}=\pi(x,u,p))$ defined by
\begin{equation}
    \Phi(x,u,p) = (\bar{x}, \bar{u}, \bar{p})
\end{equation}
where $\bar{p}$ satisfies the following prolongation relationship \cite{olver1995equivalence}:
\begin{align}
    \bar{p} = \frac{d\bar{u}}{d\bar{x}}
    =(D_x\psi(x,u))(D_x\chi(x,u))^{-1} 
\end{align}
is a prolonged diffeomorphism on $J^1(\R^n)$. Here $D_x$ is the total derivative operator with respect to the base variables $x$, defined as:
\begin{align*}
    D_i=\partial_{x_i} + p_i \partial_u\qc D_x = \mqty(D_1\\\vdots\\D_n)\in(\R^n)^*
\end{align*} 
On $J^1(\R)$ for example, we have:
\begin{align*}
    \bar{p}=(D_x\psi)(D_x\chi)^{-1} = \frac{\partial_x \psi + p \partial_u \psi}{\partial_x \chi + p \partial_u \chi}
\end{align*}

Thus, one may `lift' an ODE integrator on $\R^n\times\R$ to a prolonged diffeomorphism on $J^1(\R^n)$ by first computing the flow on $(x,u)$ and then using the prolongation formula to define the transformation induced on the $p$ variables.

Now assume that $\varphi$ is defined by an ODE integrator, i.e. $\varphi = \Phi^h_F$ is the time-$h$ flow of a vector field $F$ on $\R^n\times\R$. Then, the transformation of $p$ can be computed by differentiating the flow $\Phi^h_F$ with respect to the initial conditions, which gives us the total derivatives $D_x\bar{u}$ and $D_x\bar{x}$ needed to compute $\bar{p}$ via the prolongation formula. Thus, if the integrator is implemented via differentiable programming techniques, the transformation of $p$ can be computed automatically by backpropagation through the integrator (i.e. automatic differentiation, giving us access to all of $\pdv{x}\chi,\pdv{u}\chi,\pdv{x}\psi,\pdv{u}\psi$ to the accuracy of the underlying differentiation pipeline). This allows us to construct explicit prolonged diffeomorphisms corresponding to any standard ODE integrator, such as explicit Euler methods, Runge-Kutta methods, and multistep methods, and therefore explicit contact integrators corresponding to these methods. Remarkably, this allows us to prolong even adaptive-step higher-order integrators (e.g. Dormand-Prince) to the jet space. Alternatively, an `adjoint method' can be used to compute the transformation of $p$ from the transformation of $(x,u)$, which is a standard technique in numerical analysis and machine learning for computing gradients of implicit functions. In this work, we use the automatic differentiation approach for simplicity, but the adjoint method is a viable alternative, depending on the specific application and computational resources available.

Finally, we note that the prolongation formula can be numerically ill-posed, since the inverse $(D_x\chi)^{-1}$ in the formula for $\bar{p}$ is not guaranteed to be well-behaved. This can lead to numerical instability in the computation of $\bar{p}$, especially for large time steps or for systems with stiff dynamics. In practice, one may need to use regularization techniques or adaptive time-stepping methods to mitigate this issue and ensure the stability of the integrator. This issue is not only a numerical artifact; it is intrinsic to contact dynamics, which can exhibit finite-time blow-up.

\begin{exmp}[Gradient Step] A simple example of a prolonged diffeomorphism is the `gradient step' map defined by:
\begin{equation}
    G_f:\mqty(x\\u)\mapsto \mqty(x\\u - f(x))
\end{equation}
where $f:\R^n\to\R$ is a smooth function. The extension of this map to a prolonged diffeomorphism on $J^1(\R^n)$ is given by:
\begin{equation}
    \tilde{G}_f:\mqty(x\\u\\p)\mapsto \mqty(x\\u - f(x)\\p - \grad f(x))
\end{equation}
which can be verified to satisfy the prolongation formula as follows:
\begin{align*}
    \bar{p} & = \frac{d\bar{u}}{d\bar{x}} \\
    & = \frac{d(u - f(x))}{dx} \\
    & = p - \grad f(x)
\end{align*}
as required. This transformation resembles a `gradient step' in optimization, and can be used to construct contact integrators for gradient-like systems, such as dissipative mechanical systems and optimization algorithms.
\end{exmp}

\subsection{Numerical Examples}

\subsubsection{The Damped Harmonic Oscillator}
The damped harmonic oscillator is a very simple example of a contact Hamiltonian system, given by the Hamiltonian
\begin{equation}
    H(x,u,p) = \frac{1}{2}p^2 + \frac{1}{2}x^2 + \gamma u
\end{equation}
where $x,p\in\R$ are the position and momentum variables, $u$ is a `friction' variable, which encodes the dissipation in the system, and $\gamma$ is a constant dissipation coefficient. The Hamiltonian satisfies the evolution equation $\dot{H} = -\gamma H$, which shows that the contact Hamiltonian along a trajectory decays exponentially at a rate determined by $\gamma$. This is an example that admits a closed-form solution we can compare our integrators to, and is a useful test case for demonstrating the performance of our integrators on a simple contact system.

Three reasonable splittings that arise naturally for this system are the following:
\begin{align}
    H & = \underbrace{\frac{1}{2}p^2}_{S} + \underbrace{\frac{1}{2}x^2 + \gamma u}_{P},\qq{Splitting 1}\\
    & = \underbrace{\frac{1}{2}p^2 + \frac{1}{2}x^2}_{S} + \underbrace{\gamma u}_{P},\qq{Splitting 2}\\
    &= \underbrace{\frac{1}{2}p^2}_{K} + \underbrace{\frac{1}{2}x^2}_{V} + \underbrace{\gamma u}_{D},\qq{Splitting 3}
\end{align}
with $S$ denoting the `strict' terms and $P$ denoting the `prolonged' terms from our two subalgebras, and the $K, V, D$ denoting the kinetic, potential, and dissipative terms respectively. We can then construct splitting integrators based on these splittings, and compare their performance to a standard non-structure-preserving integrator. While we expect all of our integrators to adhere to the contact structure and therefore to exhibit better long-term behavior than the non-structure-preserving integrator, the error behavior of the different splittings may differ (based on the bracket expansion of the modified Hamiltonian), and it is interesting to compare their performance in practice.

\begin{figure}
    \centering
    \includegraphics[width=0.8\textwidth]{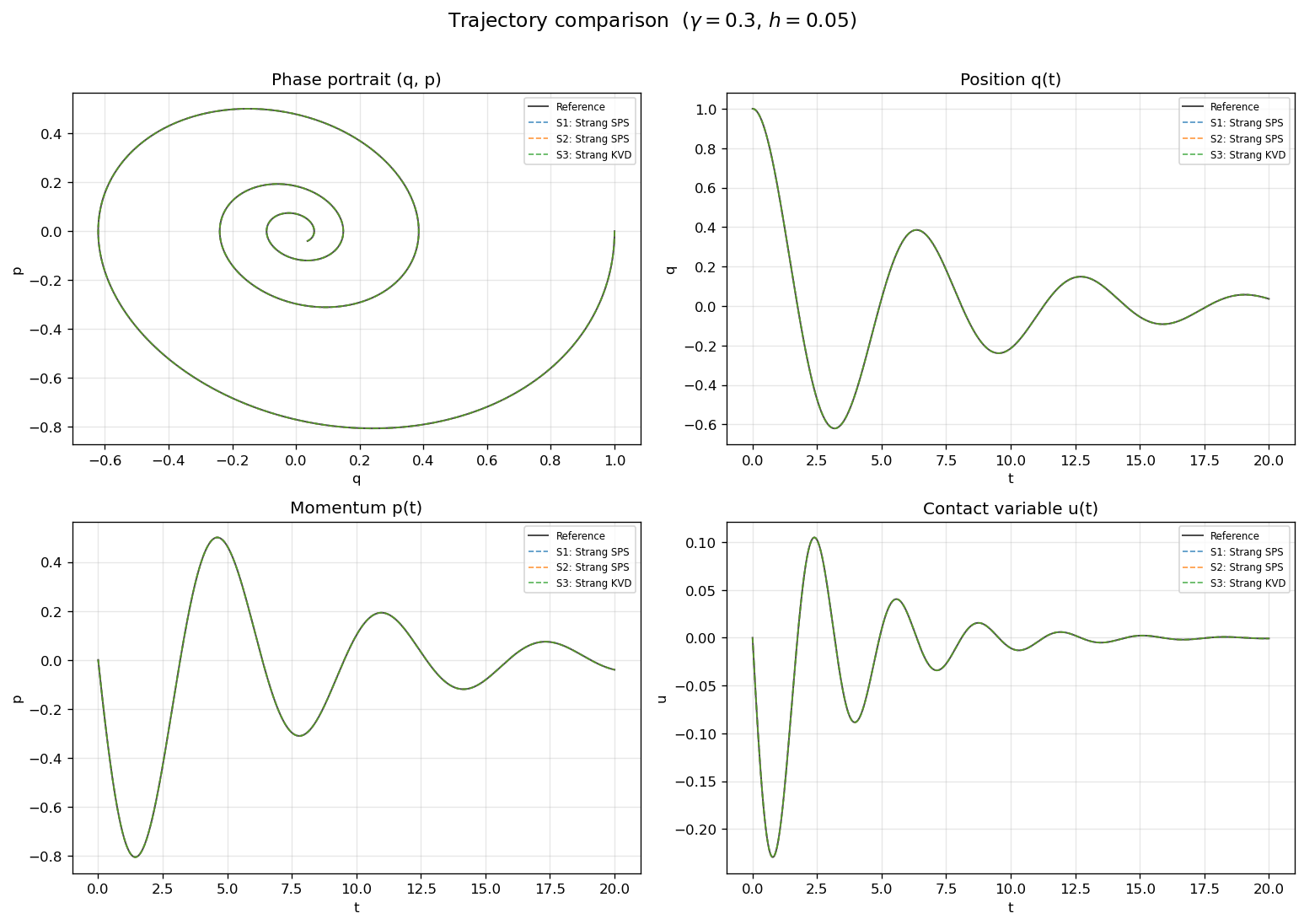}
    \caption{Reference trajectories of the damped harmonic oscillator for damping coefficient $\gamma=0.3$ and step-size $h=0.05$, for various splitting integrators (Strang SPS for Splittings 1 and 2, and Strang KVD for Splitting 3).}
    \label{fig:DHO_reference}
\end{figure}

In \cref{fig:DHO_reference} we visualize the reference trajectories of the damped harmonic oscillator for Strang splitting integrators based on the three different splittings; we can see that all three integrators exhibit the expected dissipative behavior, with the trajectories spiraling towards the origin in the phase space, with no difference being visible at this scale. In \cref{fig:DHO_RK4_comparison} we compare the performance of our splitting integrators to a standard non-structure-preserving integrator (RK4) by visualizing the error in the Hamiltonian $H$ over time. The RK4 and Yoshida-4 methods are both 4th-order integrators, with the latter being structure-preserving. In the middle and right panels we see the common oscillatory behavior of the error in the Hamiltonian and the conformal factor $\lambda$ for the structure-preserving integrators; however, the error in the Hamiltonian for the RK4 method steadily grows over time, which is expected for non-structure-preserving integrators. Moreover, the error in the conformal factor $\lambda$ for the RK4 method also grows over time, which indicates that the RK4 method is not accurately capturing the dissipative nature of the system, while our splitting integrators are able to capture this behavior in a stable manner.

For an extended comparison between different choices of splittings and integrators, see \cref{ap:Extended_Numerical}.

\begin{figure}
    \centering
    \includegraphics[width=0.99\textwidth]{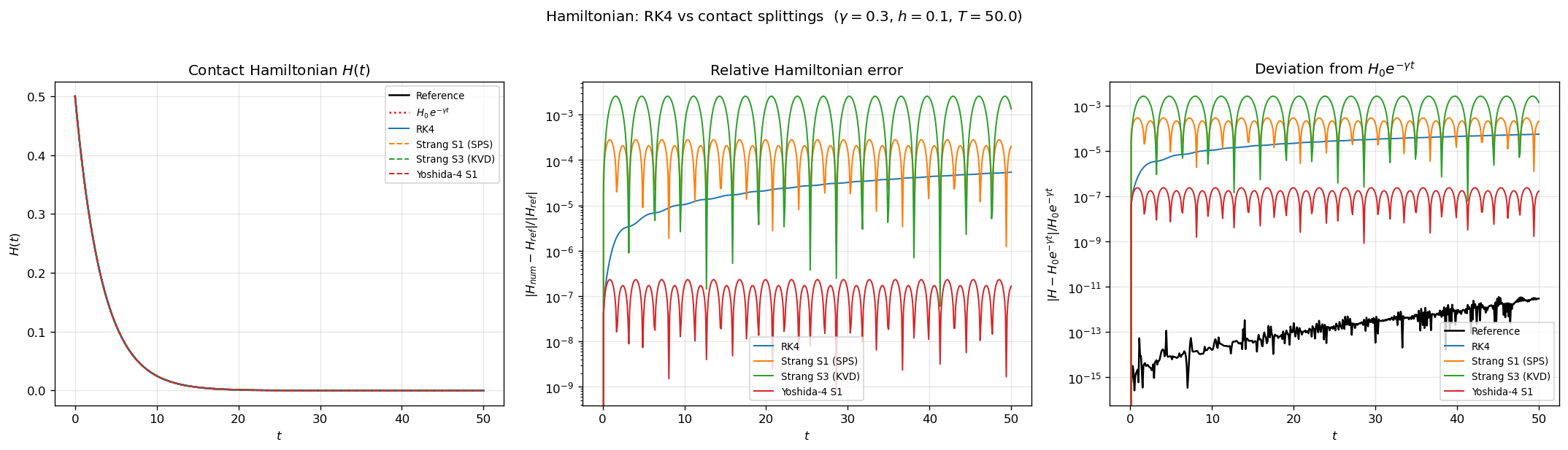}
    \caption{Comparison of contact splitting integrators (Strang SPS and Yoshida-4 for Splitting 1, Strang KVD for Splitting 3) to a standard non-structure-preserving integrator (RK4) for the damped harmonic oscillator with damping coefficient $\gamma=0.3$ and step-size $h=0.1$. (Left) Contact Hamiltonian decay over time. (Middle) Relative Hamiltonian Error over time. (Right) Relative error in the conformal factor $\lambda$ over time.}
    \label{fig:DHO_RK4_comparison}
\end{figure}

\subsubsection*{Contact Van der Pol Oscillator}

The contact Van der Pol oscillator (VdP) is a Lienard-type system that can be expressed as a contact Hamiltonian system. We consider the following forced time-dependent Hamiltonian:
\begin{equation}
    H(x,u,p) = pu - \epsilon(1-x^2)u - \frac{1}{2}x^2 + A\cos(\omega t)
\end{equation}
where $x,u,p\in\R$ are the position, energy, and momentum variables respectively, and $\epsilon$ is a parameter that controls the nonlinearity of the system. The parameter $\omega$ controls the frequency of the external forcing, and can generate classical or chaotic dynamics depending on its value \cite{pihajoki2015explicit,parlitz1987period}. Its integration using exact splittings is discussed in depth in \cite{zadra2021vdp}. We use it here for a different methodological reason: since the Hamiltonian is affine-in-$p$, it provides a clean test bed for the direct `base-integration $+$ prolongation' paradigm. In particular, it allows us to demonstrate that autodiff-based prolongation is computationally feasible in practice, that it compares favorably with a deeper splitting approach, and that it remains effective in a forced time-dependent regime that includes chaotic dynamics. We therefore compare a direct `base-integration $+$ prolongation' integrator for this system to the splitting approach of \cite{zadra2021vdp}. We make use of the autodiff-based prolongation approach for the first integrator, i.e.:
\begin{align}
    \bar{p}=\frac{\partial_x\psi+p\partial_u\psi}{\partial_x\chi+p\partial_u\chi}
\end{align}
where each of the four derivative terms on the right-hand side are computed by backpropagation through the ODE integrator for the base variables $(x,u)$, implemented in \texttt{JAX}. We use both a constant step-size RK4 integrator (4th-order) and an adaptive step-size DOP853 (8th-order) integrator for the base integration, and compare their performance to the Strang CBABC splitting integrator based on the splitting of the Hamiltonian into its kinetic, potential, and dissipative terms. We find that the direct `base-integration $+$ prolongation' integrator is able to capture the qualitative behavior of the system to high order. For the splitting approach, we use the splitting:
\begin{align*}
    H & = \underbrace{pu}_{C} + \underbrace{-\epsilon(1-x^2)u}_A -\underbrace{\frac{1}{2}x^2}_{B}
\end{align*}
with a Strang CBABC (2nd-order).

In \cref{fig:VdP_comparison} we visualize the trajectories obtained by each integrator in the phase space, and we see that all three integrators are able to capture the expected limit cycle behavior of the system, with the DOP853 integrator used as a reference for the base ODE. In \cref{fig:VdP_comparison_errors} we perform a convergence study in terms of the step-size and variable $\epsilon$ that controls the nonlinearity of the system for the RK4 and Strang CBABC integrators. We find that the advantage of the prolonged integrator is more apparent for larger values of $\epsilon$, as the step size varies, which is expected since the system becomes more nonlinear for larger values of $\epsilon$. Both methods are exact-contact, and therefore preserve the contact structure exactly, but the prolonged integrator is able to capture the dynamics more accurately for larger values of $\epsilon$ and larger step sizes. In this example, the main point is to show that autodiff-based prolongation is practically feasible, compares favorably with a deeper splitting, and remains effective in a forced regime with chaotic dynamics. For an extended numerical comparison between different choices of integrators and splittings, see \cref{ap:Extended_Numerical}.

\begin{figure}
    \centering
    \includegraphics[width=0.99\textwidth]{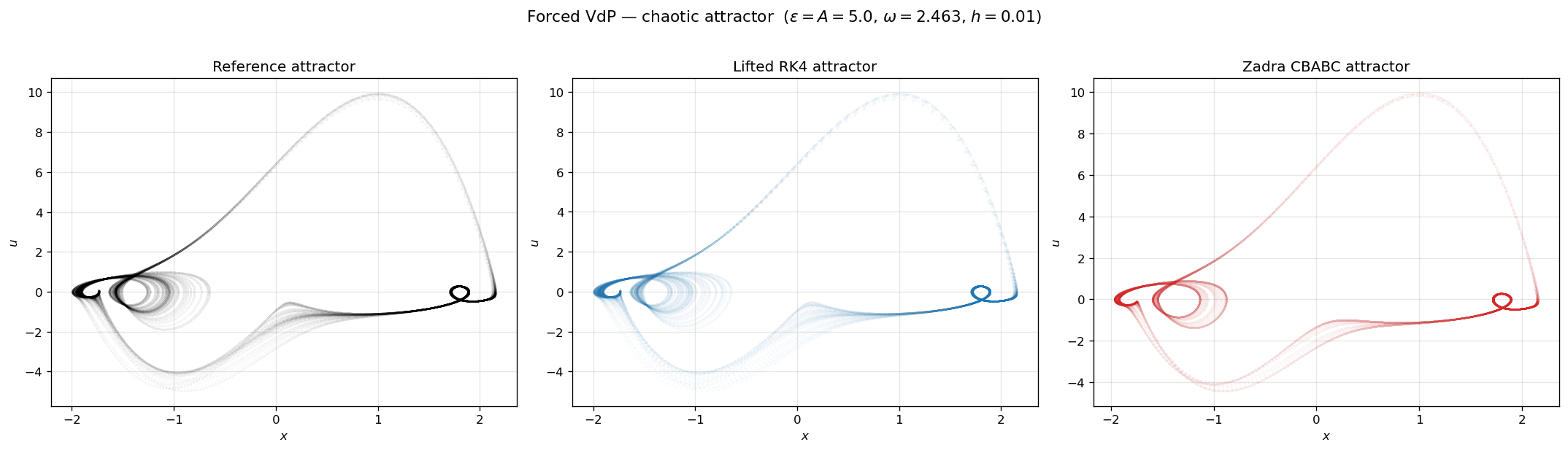}
    \caption{Comparison of a direct `base-integration + prolongation' integrator (RK4 and DOP853) to a splitting integrator (Strang CBABC) for the contact Van der Pol oscillator with $\epsilon=5.0$, $A=5.0$, $\omega=2.463$, and step-size $h=0.01$. (Left) Reference Trajectory (DOP853), (Middle) RK4, (Right) Strang CBABC.}
    \label{fig:VdP_comparison}
\end{figure}

\begin{figure}
    \centering
    \includegraphics[width=0.99\textwidth]{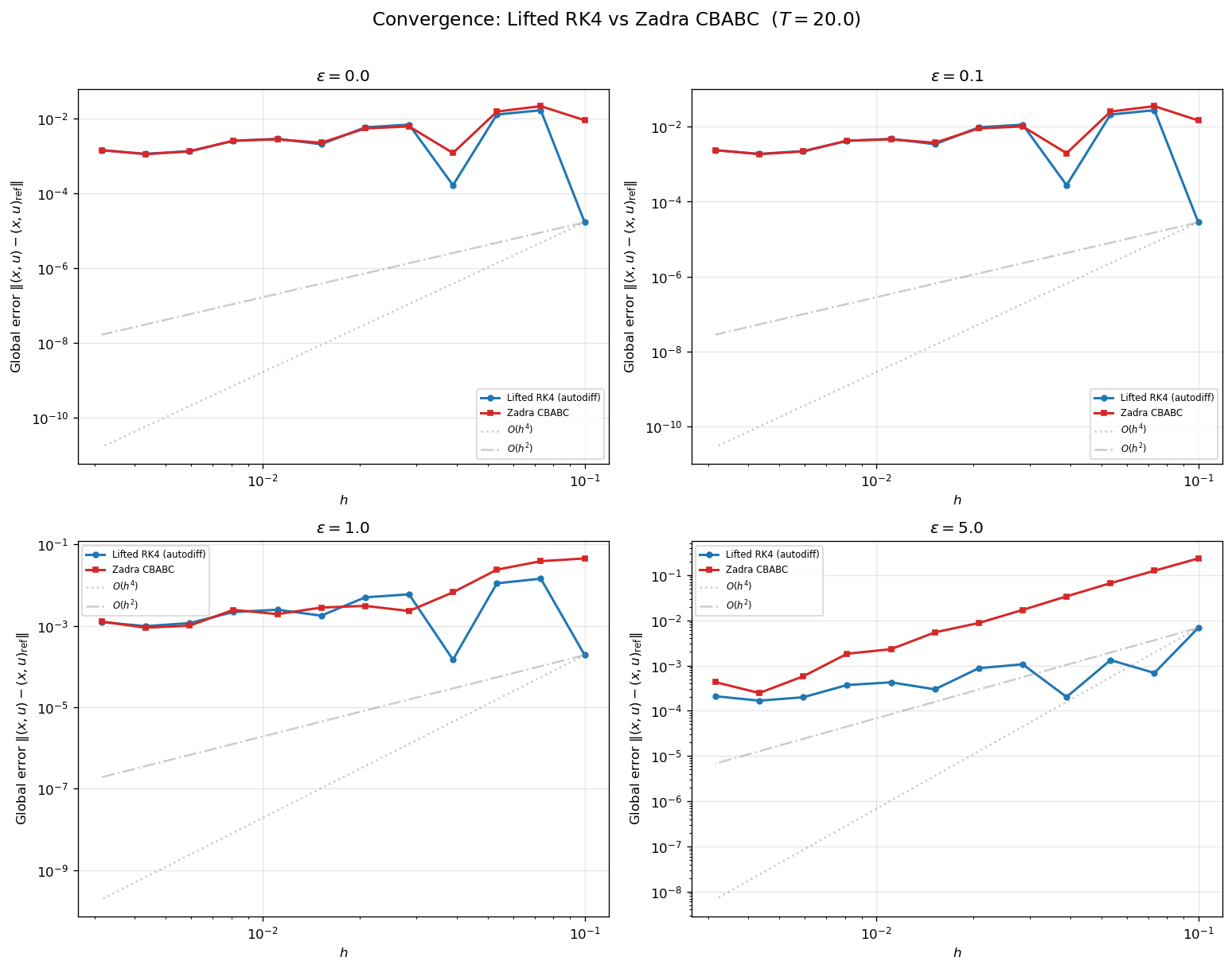}
    \caption{Comparison of a direct `base-integration + prolongation' integrator (RK4) to a splitting integrator (Strang CBABC) for the contact Van der Pol oscillator with $\epsilon=5.0$, $A=0.0$. The advantage of the prolonged integrator is more apparent for larger values of $\epsilon$, as the step size varies. Both methods are exact-contact.}
    \label{fig:VdP_comparison_errors}
\end{figure}

\subsubsection*{Nonlinear Dissipative Double-Well}

Finally, we consider a more complex system, given by a contact Hamiltonian of the form
\begin{equation}
    H(x,u,p)=\frac{1}{2}p^2 + (x^2-1)^2 + \sigma p^2u
\end{equation}
with a double well potential and a nonlinear dissipation term, which does not  belong directly to either of our subalgebras. The point of this example is to show that the nonlinear $p^2u$ term can still be handled by our splitting framework, including through a commutator gadget. This latter term can be approximated via splittings since:
\begin{align*}
    p^2u = \qty[-\frac{u^2}{2},p^2]
\end{align*}
with each term now belonging to one of our subalgebras. At the same time, this last term also admits an exact analytic splitting as a Bernoulli ODE in $p$, i.e.:
\begin{align*}
    x(t)&= x_0+2\sigma p_0 u_0 t \\
    p(t)&=\frac{p_0}{\sqrt(1+2\sigma p_0^2 t)}\\
    u(t)&=u_0\sqrt{1+2\sigma p_0^2 t}
\end{align*}

Overall, we consider three splittings for this system, which are the following: (A) a TV splitting where $T=\frac{1}{2}p^2+\sigma p^2 u$ yields a Bernoulli-type ODE and $V=(x^2-1)^2$ is a potential term (B) a CSC splitting where the $\sigma p^2 u$ term yields an exact Bernoulli-type ODE and the remainder is a symplectic Hamiltonian, and (C) a symplectic Hamiltonian for $\frac{1}{2}p^2+(x^2-1)^2$ plus a gadget splitting for the nonlinear $p^2u$ term. While each splitting is structure-preserving, the Bernoulli-ODEs admit closed-form approximations that can easily yield second-order integrators, while the gadget splitting requires a more complex composition of flows (the simplest iteration has global order $1/2$),
that is:
\begin{equation}
    e^{\sqrt{h}A}e^{\sqrt{h}B}e^{-\sqrt{h}A}e^{-\sqrt{h}B} = e^{h[A,B]} + O(h^{3/2})
\end{equation}
and obtaining higher-order integrators requires deeper compositions. This makes the example useful for separating the two roles of the framework. The TV and CSC splittings exploit additional structure specific to this Hamiltonian, and therefore yield the practically preferable structured integrators. By contrast, the gadget splitting treats the nonlinear $p^2u$ term through the generic commutator route, showing mixed $(p,u)$ terms can still be handled when no more direct structured decomposition is available.

For the three splittings, we perform a numerical convergence comparison in terms of the step size $h$ in \cref{fig:Chaos_comparison} for different values of $\sigma$, where we find that the TV and CSC splittings perform similarly, while the naive gadget splitting performs significantly worse. 

\begin{figure}
    \centering
    \includegraphics[width=0.99\textwidth]{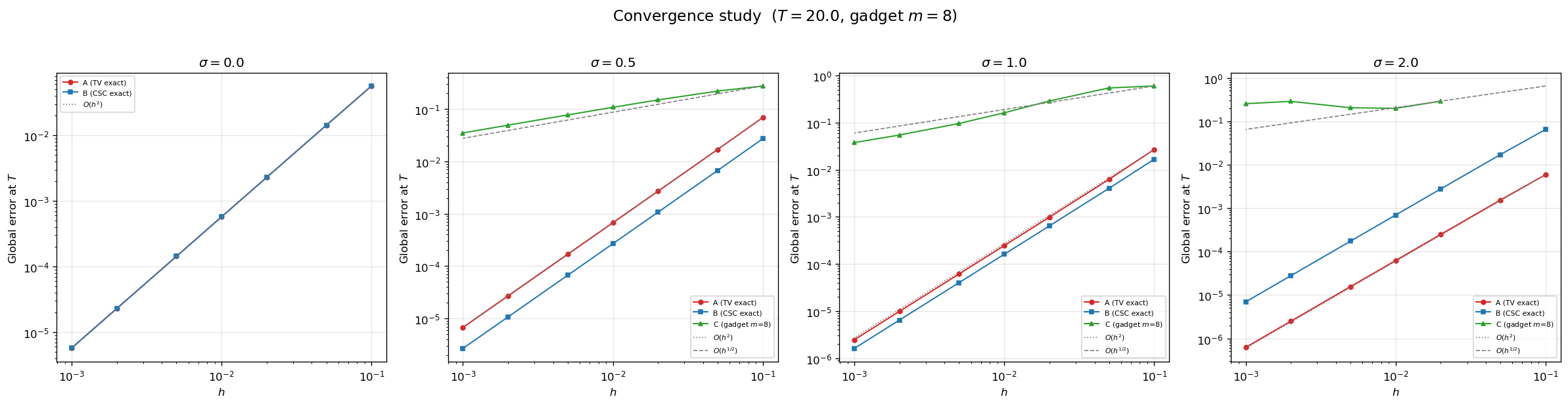}
    \caption{Convergence analysis for the nonlinear dissipative double-well system with varying step size $h$ and nonlinear parameter $\sigma$.}
    \label{fig:Chaos_comparison}
\end{figure}

In \cref{fig:Chaos_sigma_comparison}, we compare attracting sets for a fixed step size $h=0.01$ and varying values of $\sigma$. Careful examination of the figures demonstrates a qualitatively different behavior of the gadget-splitting, compared to the TV and CSC splittings, which are more similar to each other. Extended numerical results are presented in \cref{ap:Extended_Numerical}, where we additionally demonstrate higher-order gadgets for approximating the flow of the commutator.

\begin{figure}
    \centering
    \includegraphics[width=0.99\textwidth]{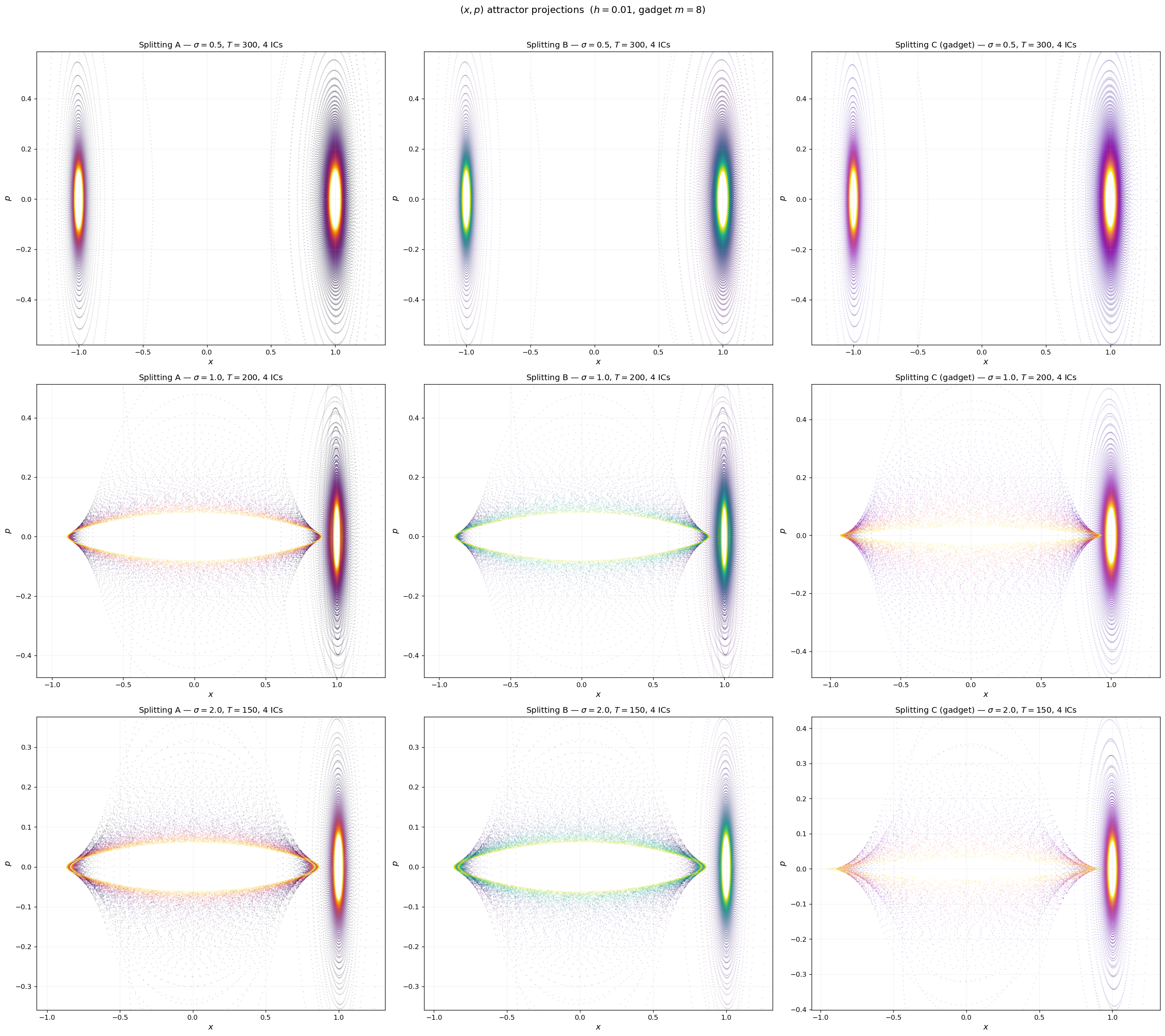}
    \caption{Comparison of attracting sets obtained by integrating four distinct initial conditions, for the three different splittings of the nonlinear system. (Left) TV splitting, (Middle) CSC splitting, (Right) Gadget splitting.}
    \label{fig:Chaos_sigma_comparison}
\end{figure}

For an extended numerical comparison between different choices of splittings and integrators, see \cref{ap:Extended_Numerical}.

\section{Discussion}
\label{sec:Discussion}

In this work, we have developed a framework for constructing structure-preserving integrators for local contact Hamiltonian dynamics, with global statements proved on the jet space $J^1(\R^n)$, by combining two tractable classes of exact contact subflows: strict contactomorphisms and prolonged diffeomorphisms. The main theoretical ingredient is the Lie-algebra density result for the corresponding strict and prolonged Hamiltonians. Combined with standard BCH-based product-formula constructions, this yields a universal splitting framework on $J^1(\R^n)$ together with approximation bounds and asymptotic error estimates. The lifting constructions in the numerical section then show that these abstract building blocks can be realized concretely from symplectic integrators on $T^*\R^n$ and ODE integrators on $\R^n\times\R$, and the examples illustrate this realization on representative low-dimensional systems.

The broader significance of this work is therefore methodological as much as it is application-specific. Rather than constructing contact integrators on a model-by-model basis from the relatively small collection of contact Hamiltonians with closed-form subflows, the present framework reduces the problem to two exact-contact building blocks for which mature lower-dimensional numerical toolkits already exist. The polynomial approximation argument supplies a formal universality guarantee on compact sets, but it should not be read as the main practical pipeline in high dimension. Practically, the main advantage of choosing such a large pair of subalgebras is that many Hamiltonians already split into strict and prolonged pieces, or require only a small number of commutator corrections, before any approximation step is invoked. In this sense, the framework provides a transfer principle from established numerical methods to the contact setting: advances in symplectic integration can be imported through strict lifts, while advances in standard ODE solvers, variational equations, and automatic differentiation can be imported through prolonged lifts. The low-dimensional examples in this paper should therefore be viewed as proofs of concept for a reusable construction strategy, rather than as the natural limit of the framework's applicability.

This viewpoint suggests broad applicability whenever contact geometry provides the appropriate state-space description of dissipative or non-conservative dynamics. In scientific computing, it offers a systematic route to contact-preserving time steppers for dissipative mechanical models, thermodynamic systems, and related non-conservative evolutions \cite{simoes2020contact,grmela2014contact_thermo}. In optimization and machine learning, the same strict/prolonged decomposition suggests a principled way to build contact-preserving flow maps from lower-dimensional components whose numerical realization is already well understood. More generally, the framework points toward extending the toolbox of geometric integration beyond the conservative symplectic setting and into a substantially wider class of structured dynamical systems.

Several directions remain open. On the analytical side, the Lie-density result suggests possible connections with controllability questions and with normal-form constructions for contact dynamics. On the geometric side, it would be interesting to extend the framework beyond jets and local Darboux coordinates to broader classes of contact manifolds, such as cosphere bundles and projectivized cotangent bundles; on manifolds with nontrivial topology, one would first need a principled way to patch chart-local constructions into a global splitting framework. On the numerical side, the examples also indicate that practical performance depends on the chosen representation and splitting, so systematic strategies for selecting generators and compositions for specific problems remain an important topic for further study \cite{mclachlan2002splitting}.

\section{Acknowledgements}
I am grateful to my PhD advisors at Johns Hopkins University, Mauro Maggioni and Soledad Villar, for their continued support and guidance throughout this work. I would also like to thank Joshua Burby and Peter Olver for many helpful discussions during the early stages of this work, which were instrumental in shaping the ideas developed here.

\newpage 

\bibliography{main}

\begin{thebibliography}{10}

\bibitem{kholodenko2013applications_contact}
Arkady~L Kholodenko.
\newblock {\em Applications of contact geometry and topology in physics}.
\newblock World Scientific, 2013.

\bibitem{arnold2013mechanics}
Vladimir~Igorevich Arnol'd.
\newblock {\em Mathematical methods of classical mechanics}, volume~60.
\newblock Springer Science \& Business Media, 2013.

\bibitem{mrugala1991contact_thermo}
Ryszard Mrugala, James~D Nulton, J~Christian Sch{\"o}n, and Peter Salamon.
\newblock Contact structure in thermodynamic theory.
\newblock {\em Reports on mathematical physics}, 29(1):109--121, 1991.

\bibitem{grmela2014contact_thermo}
Miroslav Grmela.
\newblock Contact geometry of mesoscopic thermodynamics and dynamics.
\newblock {\em Entropy}, 16(3):1652--1686, 2014.

\bibitem{lie_contact}
S.~Lie and G.~Scheffers.
\newblock {\em Geometrie der Beriihrungstransformationen}.
\newblock B.G. Teubner, Leipzig, 1896.

\bibitem{olver1995equivalence}
Peter~J Olver.
\newblock {\em Equivalence, invariants and symmetry}.
\newblock Cambridge University Press, 1995.

\bibitem{geiges2001brief}
Hansj{\"o}rg Geiges.
\newblock A brief history of contact geometry and topology.
\newblock {\em Expositiones Mathematicae}, 19(1):25--53, 2001.

\bibitem{geiges2006contact}
Hansj{\"o}rg Geiges.
\newblock Contact geometry.
\newblock In {\em Handbook of differential geometry}, volume~2, pages 315--382.
  Elsevier, 2006.

\bibitem{massot2014topological}
Patrick Massot.
\newblock Topological methods in 3-dimensional contact geometry.
\newblock {\em Contact and symplectic topology}, 26:27--83, 2014.

\bibitem{de2019contact}
Manuel de~Le{\'o}n and Manuel Lainz~Valc{\'a}zar.
\newblock Contact hamiltonian systems.
\newblock {\em Journal of Mathematical Physics}, 60(10), 2019.

\bibitem{simoes2020contact}
Alexandre~Anahory Simoes, Manuel de~Le{\'o}n, Manuel~Lainz Valc{\'a}zar, and
  David~Mart{\'\i}n de~Diego.
\newblock Contact geometry for simple thermodynamical systems with friction.
\newblock {\em Proceedings of the Royal Society A}, 476(2241):20200244, 2020.

\bibitem{de2023time}
Manuel de~Le{\'o}n, Jordi Gaset, Xavier Gr{\`a}cia, Miguel~C Mu{\~n}oz-Lecanda,
  and Xavier Rivas.
\newblock Time-dependent contact mechanics.
\newblock {\em Monatshefte f{\"u}r Mathematik}, 201(4):1149--1183, 2023.

\bibitem{gaset2020contact}
Jordi Gaset, Xavier Gracia, Miguel~C Mu{\~n}oz-Lecanda, Xavier Rivas, and
  Narciso Rom{\'a}n-Roy.
\newblock A contact geometry framework for field theories with dissipation.
\newblock {\em Annals of Physics}, 414:168092, 2020.

\bibitem{feng2010contact}
Kang Feng and Mengzhao Qin.
\newblock Contact algorithms for contact dynamical systems.
\newblock In {\em Symplectic Geometric Algorithms for Hamiltonian Systems},
  pages 477--497. Springer, 2010.

\bibitem{feng2010symplectic}
Kang Feng and Mengzhao Qin.
\newblock {\em Symplectic geometric algorithms for Hamiltonian systems}, volume
  449.
\newblock Springer, 2010.

\bibitem{araujo2025jacobi}
Ad{\'e}rito Ara{\'u}jo, Gon{\c{c}}alo~Inoc{\^e}ncio Oliveira, and Jo{\~a}o~Nuno
  Mestre.
\newblock Jacobi hamiltonian integrators.
\newblock {\em arXiv preprint arXiv:2507.18573}, 2025.

\bibitem{bravetti2020numerical}
Alessandro Bravetti, Marcello Seri, Mats Vermeeren, and Federico Zadra.
\newblock Numerical integration in celestial mechanics: a case for contact
  geometry.
\newblock {\em Celestial Mechanics and Dynamical Astronomy}, 132(1):7, 2020.

\bibitem{vermeeren2019contact}
Mats Vermeeren, Alessandro Bravetti, and Marcello Seri.
\newblock Contact variational integrators.
\newblock {\em Journal of Physics A: Mathematical and Theoretical},
  52(44):445206, 2019.

\bibitem{guenther1996herglotz}
Ronald~B Guenther, Hans Schwerdtfeger, Gustav Herglotz, CM~Guenther, and
  JA~Gottsch.
\newblock {\em The Herglotz lectures on contact transformations and Hamiltonian
  systems}.
\newblock Juliusz Schauder Center for Nonlinear Studies. Nicholas Copernicus
  University, 1996.

\bibitem{zadra2023topics}
Federico Zadra.
\newblock Topics in contact hamiltonian systems: analytical and numerical
  perspectives.
\newblock {\em PhD thesis}, 2023.

\bibitem{zadra2021vdp}
Federico Zadra, Alessandro Bravetti, and Marcello Seri.
\newblock Geometric numerical integration of li{\'e}nard systems via a contact
  hamiltonian approach.
\newblock {\em Mathematics}, 9(16):1960, 2021.

\bibitem{mclachlan2002splitting}
Robert~I McLachlan and G~Reinout~W Quispel.
\newblock Splitting methods.
\newblock {\em Acta Numerica}, 11:341--434, 2002.

\bibitem{francca2021dissipative}
Guilherme Fran{\c{c}}a, Michael~I Jordan, and Ren{\'e} Vidal.
\newblock On dissipative symplectic integration with applications to
  gradient-based optimization.
\newblock {\em Journal of Statistical Mechanics: Theory and Experiment},
  2021(4):043402, 2021.

\bibitem{bravetti2023bregman}
Alessandro Bravetti, Maria~L Daza-Torres, Hugo Flores-Arguedas, and Michael
  Betancourt.
\newblock Bregman dynamics, contact transformations and convex optimization.
\newblock {\em Information Geometry}, 6(1):355--377, 2023.

\bibitem{testa2025geometric}
Andrea Testa, S{\o}ren Hauberg, Tamim Asfour, and Leonel Rozo.
\newblock Geometric contact flows: Contactomorphisms for dynamics and control.
\newblock {\em arXiv preprint arXiv:2506.17868}, 2025.

\bibitem{krieg_michorl1997convenient}
Andreas Kriegl and Peter~W Michor.
\newblock {\em The convenient setting of global analysis}, volume~53.
\newblock American Mathematical Soc., 1997.

\bibitem{banyaga2013structure}
Augustin Banyaga.
\newblock {\em The structure of classical diffeomorphism groups}, volume 400.
\newblock Springer Science \& Business Media, 2013.

\bibitem{honda2019notes}
Ko~Honda.
\newblock Notes for math 599: contact geometry.
\newblock {\em Lecture Notes available at http://www-bcf. usc. edu/\~{}
  khonda/math599/notes. pdf}, 2019.

\bibitem{hairer2006geometric}
Ernst Hairer, Christian Lubich, and Gerhard Wanner.
\newblock {\em Geometric numerical integration}, volume~31 of {\em Springer
  Series in Computational Mathematics}.
\newblock Springer-Verlag, Berlin, second edition, 2006.
\newblock Structure-preserving algorithms for ordinary differential equations.

\bibitem{narasimhan1985analysis}
Raghavan Narasimhan.
\newblock {\em Analysis on real and complex manifolds}, volume~35.
\newblock Elsevier, 1985.

\bibitem{devore1993constructive}
Ronald~A DeVore and George~G Lorentz.
\newblock {\em Constructive approximation}, volume 303.
\newblock Springer Science \& Business Media, 1993.

\bibitem{suzuki1990fractal}
Masuo Suzuki.
\newblock Fractal decomposition of exponential operators with applications to
  many-body theories and monte carlo simulations.
\newblock {\em Physics Letters A}, 146(6):319--323, 1990.

\bibitem{yoshida1990construction}
Haruo Yoshida.
\newblock Construction of higher order symplectic integrators.
\newblock {\em Physics letters A}, 150(5-7):262--268, 1990.

\bibitem{turaev2002polynomial}
Dmitry Turaev.
\newblock Polynomial approximations of symplectic dynamics and richness of
  chaos in non-hyperbolic area-preserving maps.
\newblock {\em Nonlinearity}, 16(1):123, 2002.

\bibitem{kevrekidis2024neural}
George~A Kevrekidis, Daniel~A Serino, Alexander Kaltenborn, J~Tinka Gammel,
  Joshua~W Burby, and Marc~L Klasky.
\newblock Neural network representations of multiphase equations of state.
\newblock {\em arXiv preprint arXiv:2406.19957}, 2024.

\bibitem{pihajoki2015explicit}
Pauli Pihajoki.
\newblock Explicit methods in extended phase space for inseparable hamiltonian
  problems.
\newblock {\em Celestial Mechanics and Dynamical Astronomy}, 121(3):211--231,
  2015.

\bibitem{parlitz1987period}
Ulrich Parlitz and Werner Lauterborn.
\newblock Period-doubling cascades and devil’s staircases of the driven van
  der pol oscillator.
\newblock {\em Physical Review A}, 36(3):1428, 1987.

\bibitem{geiges2008introduction}
Hansj{\"o}rg Geiges.
\newblock {\em An introduction to contact topology}, volume 109.
\newblock Cambridge University Press, 2008.

\bibitem{apostol1973mathematical}
Tom~M Apostol.
\newblock {\em Mathematical analysis: A modern approach to advanced calculus}.
\newblock Addison-Wesley, Reading, Mass, 2nd edition, 1973.

\bibitem{hartman2002ordinary}
Philip Hartman.
\newblock {\em Ordinary differential equations}.
\newblock SIAM, 2002.

\bibitem{abraham2012manifolds}
Ralph Abraham, Jerrold~E Marsden, and Tudor Ratiu.
\newblock {\em Manifolds, tensor analysis, and applications}.
\newblock Springer Science \& Business Media, 2012.

\end{thebibliography}
\bibliographystyle{unsrt}

\newpage

\appendix

\section{Auxiliary Results}

\subsection{Coordinate-Free Contact Hamiltonian Formalism}
\label{ap:Coordinate_Free_Contact}

Let $(\mcM,\xi=\ker\alpha)$ be a contact manifold, and fix a contact form $\alpha$ defining $\xi$.
The \textbf{Reeb vector field} $R_\alpha\in\mfX(\mcM)$ is the unique vector field satisfying
\begin{equation}
    \alpha(R_\alpha)=1,
    \qquad
    \iota_{R_\alpha}d\alpha=0.
\end{equation}

A vector field $X\in\mfX(\mcM)$ is called a \textbf{contact vector field} if
\begin{equation}
    \mathcal{L}_X\alpha=\zeta_X\alpha
\end{equation}
for some smooth function $\zeta_X$ on $\mcM$.

Once the contact form $\alpha$ is fixed, every smooth function $H\in C^\infty(\mcM)$ determines a unique \textbf{contact Hamiltonian vector field} $X_H\in\mfX(\mcM)$ by
\begin{equation}
    \alpha(X_H)=-H,
    \qquad
    \iota_{X_H}d\alpha=dH-(R_\alpha H)\alpha.
\end{equation}
Conversely, if $X$ is a contact vector field, then its Hamiltonian with respect to $\alpha$ is
\begin{equation}
    H=-\alpha(X).
\end{equation}

By Cartan's formula,
\begin{equation}
    \mathcal{L}_{X_H}\alpha
    =
    d(\alpha(X_H))+\iota_{X_H}d\alpha
    =
    -dH+dH-(R_\alpha H)\alpha
    =
    -(R_\alpha H)\alpha,
\end{equation}
so $X_H$ is indeed a contact vector field.

In Darboux coordinates $(x^i,u,p_i)$ with
\begin{equation}
    \alpha=du-p_i dx^i,
\end{equation}
the Reeb field is $R_\alpha=\pdv{u}$, and the definition above reduces to
\begin{equation}
X_H
=
\frac{\partial H}{\partial p_i}\frac{\partial}{\partial x^i}
-
\qty(\frac{\partial H}{\partial x^i}+p_i\frac{\partial H}{\partial u})\frac{\partial}{\partial p_i}
+
\qty(p_i\frac{\partial H}{\partial p_i}-H)\pdv{u},
\end{equation}
which is the coordinate formula used in the main text.

Thus, the coordinate expressions in Darboux charts are local representatives of a coordinate-free construction defined globally once a contact form $\alpha$ is chosen. Note, however, that the Reeb field and the Hamiltonian $H=-\alpha(X_H)$ depend on the choice of contact form, whereas the underlying contact structure $\xi$ and the notion of a contact vector field do not. This coordinate-free formalism clarifies the intrinsic objects involved, but it does not by itself furnish a global analogue of the splitting construction on an arbitrary contact manifold. For that, one would additionally need globally compatible generators and a way to patch the chart-local constructions used in the main text.

\subsection{Proofs of Lemmas 2.4 and 2.5}

We will work directly with the coordinate form of the Hamiltonians.

\begin{proof}[Proof of Lemma 2.4]
    Let $H(x,u,p)=K(x,p)$, where $K\in C^\infty(T^*\R^n)$. Then the contact Hamiltonian vector field generated by $H$ is given by
    \begin{equation}
        \dot{x}^i=\pdv{K}{p_i},\qquad
        \dot{p}_i=-\pdv{K}{x^i},\qquad
        \dot{u}=p_i\frac{\partial K}{\partial p_i}-K.
    \end{equation}
    Thus the $(x,p)$ variables evolve according to the symplectic Hamiltonian flow generated by $K$, while the $u$ variable evolves according to an auxiliary ODE along that trajectory. Denoting the symplectic Hamiltonian flow of $K$ on $T^*\R^n$ by $\Phi_K^t$, we therefore have
    \begin{align*}
        \pi_{(x,p)}\circ\Phi_H^t(x,u,p)=\Phi_K^t\circ \pi_{(x,p)}(x,u,p)
    \end{align*}
    where $\pi_{(x,p)}:J^1(\R^n)\to T^*\R^n$ is the canonical projection on the $(x,p)$ variables. We have therefore shown that the projection of the contact flow generated by $H$ onto the $(x,p)$ variables coincides with the symplectic Hamiltonian flow generated by $K$. 

    We further claim that the flow $\Phi_H^t$ is strict, i.e. each time-$t$ map preserves the contact form $\alpha$. This follows directly from the formula
    \begin{align*}
        \mcL_{X_H}\alpha = -\pdv{H}{u}\alpha = 0
    \end{align*}
    since $H$ is independent of $u$. Thus, the flow generated by $H$ consists of strict contactomorphisms.

    Conversely, given a symplectic Hamiltonian flow $\Phi_K^t$ on $T^*\R^n$ generated by $K$, define the contact Hamiltonian $H(x,u,p)=K(x,p)$. The same computation shows that its contact flow $\Phi_H^t$ satisfies
    \begin{align*}
        \pi_{(x,p)}\circ\Phi_H^t=\Phi_K^t\circ\pi_{(x,p)}
    \end{align*}
    and consists of strict contactomorphisms. This is precisely the asserted lift of the symplectic Hamiltonian flow generated by $K$.
\end{proof}

For a standard reference on this type of correspondence (referred to more generally as ``symplectization of a contact manifold" or ``contactization of a symplectic manifold"), see \cite{geiges2008introduction,banyaga2013structure,guenther1996herglotz}.

\begin{proof}[Proof of Lemma 2.5]
    Suppose that $H(x,u,p)=f(x,u)+g^i(x,u)p_i$ is a contact Hamiltonian affine in $p$. Then the induced evolution equations on the $x$ and $u$ variables are given by
    \begin{equation}
        \dot{x}^i=g^i(x,u),\qquad
        \dot{u}=-f(x,u).
    \end{equation}
    Thus, the $(x,u)$-evolution is autonomous and independent of the momentum variable $p$, and, subject to the well-posedness of the ODE, it generates a flow $\varphi^t$ on $\R^n\times\R$ with infinitesimal generator
    \begin{equation*}
        Y = g^i(x,u)\pdv{x^i} - f(x,u)\pdv{u}.
    \end{equation*}
    We therefore have that
    \begin{align*}
        \pi_{(x,u)}\circ\Phi_H^t(x,u,p)=\varphi^t\circ \pi_{(x,u)}(x,u,p).
    \end{align*}

    To identify the full lifted flow, recall that the first prolongation of a vector field
    \begin{equation*}
        Y = \xi^i(x,u)\pdv{x^i} + \phi(x,u)\pdv{u}
    \end{equation*}
    on $\R^n\times\R$ is the vector field
    \begin{equation*}
        Y^{(1)} = \xi^i\pdv{x^i} + \phi\pdv{u} + \qty(D_i\phi - p_j D_i\xi^j)\pdv{p_i},
        \qquad D_i=\partial_{x^i}+p_i\partial_u.
    \end{equation*}
    Substituting $\xi^i=g^i$ and $\phi=-f$ gives
    \begin{align*}
        Y^{(1)}
        &= g^i\pdv{x^i} - f\pdv{u} + \qty(-D_i f - p_j D_i g^j)\pdv{p_i} \\
        &= g^i\pdv{x^i} - f\pdv{u}
        - \qty(\pdv{f}{x^i} + p_i\pdv{f}{u} + p_j\pdv{g^j}{x^i} + p_i p_j\pdv{g^j}{u})\pdv{p_i}.
    \end{align*}

    On the other hand, the contact Hamiltonian vector field of $H=f+g^i p_i$ is
    \begin{align*}
        X_H
        &= \pdv{H}{p_i}\pdv{x^i}
        - \qty(\pdv{H}{x^i}+p_i\pdv{H}{u})\pdv{p_i}
        + \qty(p_i\pdv{H}{p_i}-H)\pdv{u} \\
        &= g^i\pdv{x^i}
        - \qty(\pdv{f}{x^i} + p_j\pdv{g^j}{x^i} + p_i\pdv{f}{u} + p_i p_j\pdv{g^j}{u})\pdv{p_i}
        - f\pdv{u}.
    \end{align*}
    Hence $X_H=Y^{(1)}$. Therefore the contact flow $\Phi_H^t$ is exactly the first prolongation of the base flow $\varphi^t$.

    Conversely, given the autonomous base vector field
    \begin{equation*}
        Y = g^i(x,u)\pdv{x^i} - f(x,u)\pdv{u},
    \end{equation*}
    define the contact Hamiltonian
    \begin{equation*}
        H(x,u,p)=f(x,u)+g^i(x,u)p_i.
    \end{equation*}
    The same calculation yields $X_H=Y^{(1)}$, so the first prolongation of the base flow generated by $Y$ is precisely the contact flow generated by $H$.

    For a standard reference on the prolongation construction, see \cite{olver1995equivalence}. Note that this is a construction that is specific to a Jet-space setting, due to the prolongation formula for the $p$ variable.
\end{proof}

\subsection{The Poisson Bracket}
\label{ap:Poisson_Bracket}

The Poisson bracket is a fundamental object in symplectic geometry and Hamiltonian mechanics. In this work, it appears directly when expressing the contact-Jacobi bracket in Darboux coordinates. Here, we provide a brief overview of the Poisson bracket and its key properties.

\begin{defn}
Let $(\mcM,\omega)$ be a symplectic manifold, where $\omega$ is a symplectic form on $\mcM$. To match the contact-Hamiltonian sign convention used in the main text, for any two smooth functions $f,g \in C^{\infty}(\mcM)$, the \emph{Poisson bracket} $\{f,g\}$ is defined by
\begin{equation}
    \{f,g\} = -\omega(X_f, X_g),
\end{equation}
where $X_f$ and $X_g$ are the Hamiltonian vector fields associated with $f$ and $g$, respectively defined by the the equations:
\begin{equation}
    \iota_{X_f} \omega = df\qc \iota_{X_g} \omega = dg.
\end{equation}
\end{defn}

In local Darboux coordinates $(x^i, p_i)$ on $\mcM$, the Poisson bracket can be expressed as
\begin{equation}
    \{f,g\} = \sum_{i}^n \qty(\pdv{f}{p_i} \pdv{g}{x^i} - \pdv{f}{x^i} \pdv{g}{p_i}),
\end{equation}
where $n$ is half the dimension of the symplectic manifold $\mcM$.

In our work, we primarily consider the standard symplectic structure on the cotangent bundle $T^*\R^n$ with coordinates $(x^i, p_i)$, where $x^i$ are position coordinates and $p_i$ are momentum coordinates. In this setting, the Poisson bracket interacts with polynomials in the momentum variables in a specific way, as described in the following proposition.

\begin{prop}\label{prop:poisson_bracket_properties}
Let $f,g \in C^{\infty}(T^*\R^n)$ polynomial in the momentum variables $p$ of degree at most $m$ and $k$, respectively. Then, the Poisson bracket $\{f,g\}$ is polynomial in $p$ of degree at most $m+k-1$.
\begin{proof}
The Poisson bracket is defined as
\begin{equation*}
    \{f,g\} = \sum_{i=1}^n \qty( \pdv{f}{p_i} \pdv{g}{x^i} - \pdv{f}{x^i} \pdv{g}{p_i} ).
\end{equation*}
Since $f$ is polynomial in $p$ of degree at most $m$, the partial derivative $\pdv{f}{p_i}$ is polynomial in $p$ of degree at most $m-1$. Similarly, since $g$ is polynomial in $p$ of degree at most $k$, the partial derivative $\pdv{g}{p_i}$ is polynomial in $p$ of degree at most $k-1$. Therefore, each term in the sum defining the Poisson bracket involves either $\pdv{f}{p_i}$ or $\pdv{g}{p_i}$, which reduces the degree in $p$ by one. Consequently, the Poisson bracket $\{f,g\}$ is polynomial in $p$ of degree at most $m+k-1$.
\end{proof}
\end{prop}

\subsection{The Euler Operator}
\label{app:Euler_Op}

The Euler operator\footnote{This particular term is used to refer to various different objects in the literature. The naming here is consistent with \cite{arnold2013mechanics}} appears in the explicit formula for the contact-Jacobi bracket. In this short section we state some of its properties that appear in the proofs of some of the main results.

\begin{defn}
The \emph{Euler operator} $E:C^{\infty}(J^1(\R^n)) \to C^{\infty}(J^1(\R^n))$ is defined by
\begin{equation}\label{eqn:euler_op}
    E:f \mapsto f - p \pdv{f}{p},
\end{equation}
where $f$ is a smooth scalar function on the first jet space $J^1(\R^n)$ with Darboux coordinates $(x,p,u)$.
\end{defn}

The Euler operator provides a measure of homogeneity of a function with respect to the momentum variables $p$. In particular, it annihilates functions that are homogeneous of degree one in $p$.

\begin{prop}\label{prop:euler_op_properties}
The Euler operator $E$ satisfies the following properties:
\begin{enumerate}
    \item $E$ is a linear operator, i.e., for any $f,g \in C^{\infty}(J^1(\R^n))$ and scalars $a,b \in \R$, we have
    \begin{equation}
        E[af + bg] = aE[f] + bE[g].
    \end{equation}
    \item For any $f \in C^{\infty}(J^1(\R^n))$, if $f$ is homogeneous of degree $k$ in the momentum variables $p$, then
    \begin{equation}
        E[f] = (1 - k)f.
    \end{equation}
    \item If $f$ is polynomial in the momentum variables $p$, then
    \begin{equation}
        E[f] = \sum_{k=0}^{d} (1 - k) f_k
    \end{equation}
    where $f_k$ is the homogeneous component of degree $k$ in $p$, and $d$ is the highest degree of $p$ in $f$.
\end{enumerate}
\begin{proof}
    For (1), the linearity of $E$ follows directly from its definition:
    \begin{align*}
        E[af + bg] &= (af + bg) - p \pdv{(af + bg)}{p} \\
        &= af + bg - p \qty( a\pdv{f}{p} + b\pdv{g}{p}) \\
        &= a\qty(f - p \pdv{f}{p}) + b\qty(g - p \pdv{g}{p}) \\
        &= aE[f] + bE[g].
    \end{align*}
    For (2), if $f$ is homogeneous of degree $k$ in $p$, then by Euler's homogeneous function theorem \cite{apostol1973mathematical}, we have
    \begin{equation*}
        p \pdv{f}{p} = k f.
    \end{equation*}
    Substituting this into the definition of $E$, we get
    \begin{equation*}
        E[f] = f - p \pdv{f}{p} = f - k f = (1 - k)f.
    \end{equation*}
    For (3), if $f$ is polynomial in $p$, it can be expressed as
    \begin{equation*}
        f = \sum_{k=0}^{d} f_k
    \end{equation*}
    where each $f_k$ is homogeneous of degree $k$ in $p$. Applying $E$ to $f$, we have
    \begin{align*}
        E[f] &= E\qty[\sum_{k=0}^{d} f_k] = \sum_{k=0}^{d} E[f_k] \\
        &= \sum_{k=0}^{d} (1 - k) f_k
    \end{align*}
    where we used property (2) for each homogeneous component $f_k$. This completes the proof.
\end{proof}
\end{prop}

\textit{Remark:} An important consequence of Proposition \ref{prop:euler_op_properties} is that the Euler operator $E$, in general, preserves the degree of polynomials in the momentum variables $p$. Specifically, if $f$ is a monomial of degree $d$ in $p$, then $E[f]$ is also a monomial of degree $d$ in $p$, unless $f$ is homogeneous of degree one, in which case $E[f] = 0$.

\subsection{Bracket Technicalities}
\label{app:Bracket_Technicalities}

Here we collect some technical results regarding the contact-Jacobi bracket that are used in the main text.

\begin{lemma}\label{lem:bracket_operators}
    Given a monomial $f(x,p,u) = \gamma p^\alpha\in\mfsc(J^1(\R^n))$ in the momentum variables $p$ of degree $|\alpha| = k\geq 2$ with constant coefficient $\gamma\in\R$, each of the following operations can be written in terms of contact-Jacobi brackets with elements from $\mfpd$ and $\mfsc$:
    \begin{enumerate}
        \item Degree raising
        \item Degree lowering
        \item Scalar multiplication by a smooth function of $h(x,u)$, modulo lower degree terms.
    \end{enumerate}
\end{lemma}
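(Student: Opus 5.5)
The plan is to compute the contact-Jacobi bracket of the monomial $f=\gamma p^\alpha$ against a general prolonged Hamiltonian $d=a(x,u)+b^j(x,u)p_j\in\mfpd$ once and for all, and then read off each of the three operations by specializing $a$ and $b^j$.

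\textbf{General bracket.} Since $f$ is independent of $x$ and $u$, the term $\pdv{f}{u}E[d]$ vanishes and the Poisson part reduces to $\pdv{f}{p_i}\pdv{d}{x^i}$. By \cref{prop:euler_op_properties}(2), $E[f]=(1-k)f$ because $f$ is homogeneous of degree $k$. This gives
\begin{equation*}
    [f,d]=\pdv{f}{p_i}\qty(\pdv{a}{x^i}+p_j\pdv{b^j}{x^i}) + (k-1)\qty(\pdv{a}{u}+p_j\pdv{b^j}{u})f .
\end{equation*}
Each of the three operations is a depth-one bracket obtained from this formula.

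\textbf{Degree raising.} Take $a=0$ and $b^j=u\,\delta^j_{l}$, i.e.\ $d=u\,p_l$. Then all $x$-derivatives vanish and
\begin{equation*}
    [f,u p_l]=(k-1)\gamma\,p_l\,p^\alpha ,
\end{equation*}
which is a nonzero multiple of a degree-$(k+1)$ monomial because $k\geq 2$.

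\textbf{Degree lowering.} Take $d=x^l$ (so $a=x^l$, $b=0$, $\partial_u a=0$). Then
\begin{equation*}
    [f,x^l]=\pdv{f}{p_l}=\gamma\alpha_l\,p^{\alpha-e_l},
\end{equation*}
which lowers the degree in the $l$-th direction. One can alternatively use a strict generator for this step, but the prolonged choice is simplest.

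\textbf{Scalar multiplication.} Given a smooth $h(x,u)$, set
\begin{equation*}
    a(x,u)=\frac{1}{k-1}\int_0^u h(x,s)\,ds
\end{equation*}
and $b=0$. Then $\partial_u a=h/(k-1)$, so
\begin{equation*}
    [f,a]=h\,f+\pdv{f}{p_i}\pdv{a}{x^i}.
\end{equation*}
The correction term is polynomial in $p$ of degree $k-1$ with smooth $(x,u)$-coefficients. Hence $[f,a]=h\gamma p^\alpha$ modulo $P^{(k-1)}$, as claimed.

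\textbf{Expected obstacle.} The only real subtleties are bookkeeping: keeping the sign conventions of the bracket and of $E$ consistent, and checking that the hypothesis $k\geq 2$ is exactly what makes the factor $k-1$ nonzero and invertible. That invertibility is the step I would double-check first. The case $k=1$ fails here because $E$ annihilates linear terms, but degree-$\leq 1$ terms already lie in $\mfpd$, so they need no treatment.

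I would also verify that the antiderivative $a$ is globally smooth on $\R^n\times\R$ and belongs to $\mfpd$, which holds by differentiation under the integral sign.
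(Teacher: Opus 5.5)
Your proposal is correct and is essentially the paper's proof: it uses the same three generators ($u\,p_l$, $x^l$, and a rescaled $u$-antiderivative of $h$), except that you take the bracket in the order $[f,d]$ rather than $[g,f]$, which flips the normalizations to $1/(k-1)$ and $+x^l$ (versus the paper's $1/(1-k)$ and $-x_i/\alpha_i$) and leaves the harmless factors $(k-1)$ and $\alpha_l$ unabsorbed. As in the paper, lowering in direction $l$ tacitly requires $\alpha_l\geq 1$.
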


\begin{proof}
    Recall that the contact-Jacobi bracket of two functions $g,f \in C^{\infty}(J^1(\R^n))$ is given by
    \begin{equation*}
        \qty[g,f] = \qty{g,f} + \pdv{g}{u}E[f] - \pdv{f}{u}E[g],
    \end{equation*}

    1. Degree raising: Let $f$ be the monomial as above, and set $g = u(1-\abs{\alpha})^{-1}p_i$. Then, we have
    \begin{align*}
        \qty[g,f] &= \qty[u(1-\abs{\alpha})^{-1}p_i, \gamma p^\alpha]\\
        &= \gamma (1-\abs{\alpha})^{-1} \qty{up_i, p^\alpha} + \gamma (1-\abs{\alpha})^{-1} \pdv{(up_i)}{u}E[p^\alpha] - 0 \\
        &= \gamma (1-\abs{\alpha})^{-1} \qty{up_i, p^\alpha} + \gamma (1-\abs{\alpha})^{-1} p_i (1 - \abs{\alpha}) p^\alpha \\
        &= \gamma (1-\abs{\alpha})^{-1} \qty{up_i, p^\alpha} + \gamma p_i p^\alpha
    \end{align*}
    Now, we compute the Poisson bracket term:
    \begin{align*}
        \qty{up_i, p^\alpha} &= \sum_{j}\qty(\pdv{(up_i)}{p_j} \pdv{(p^\alpha)}{x_j} - \pdv{(up_i)}{x_j} \pdv{(p^\alpha)}{p_j}) \\
        &= u \pdv{(p^\alpha)}{x_i} - 0 \\
        &= u \cdot 0 = 0
    \end{align*}
    since $p^\alpha$ does not depend on $x$. Thus, we have
    \begin{equation*}
        \qty[g,f] = \gamma p_i p^\alpha = \gamma p^{\alpha + e_i}
    \end{equation*} 
    
    2. Degree lowering: Let $f$ be the monomial as above, and set $g=-x_i(\alpha_i)^{-1}$, where $\alpha_i$ is the $i$-th component of the multi-index $\alpha$. Then, we have
    \begin{align*}
        \qty[g,f] &= \qty{-x_i(\alpha_i)^{-1}, \gamma p^\alpha}\\
        &= \gamma (\alpha_i)^{-1} \qty{-x_i, p^\alpha} + 0 - 0 \\
        &= \gamma (\alpha_i)^{-1} \pdv{p^\alpha}{p_i} \\
        &= \gamma p^{\alpha - e_i}
    \end{align*}
     
    3. Scalar multiplication: Let $f$ be the monomial as above, with $\abs{\alpha}=k\geq 2$, and let $h(x,u) \in C^{\infty}(\R^{n+1})$ be a smooth function of $(x,u)$. We can express the scalar multiplication $h(x,u) f$ using the contact-Jacobi bracket as follows. Let $g$ be an appropriately scaled antiderivative of $h$ with respect to $u$, i.e.:
    \begin{equation*}
        g(x,u)=\frac{1}{1-k}\int_{u_0}^{u} h(x,s) ds
    \end{equation*}
    Then, we have
    \begin{align*}
        \qty[g,f] &= \qty[g, \gamma p^\alpha]\\
        &= \gamma \qty{g, p^\alpha} + \gamma \pdv{g}{u}E[p^\alpha] - 0 \\
        &= \gamma \qty{g, p^\alpha} + \gamma \frac{1}{1-k} h(x,u) (1-k) p^\alpha\\
        &= \gamma \qty{g, p^\alpha} + \gamma h(x,u) p^\alpha
    \end{align*}
    Now, we compute the Poisson bracket term:
    \begin{align*}
        \qty{g, p^\alpha} &= 0 - \pdv{g}{x_j} \pdv{(p^\alpha)}{p_j} \\
        &= -\pdv{g}{x_j} \alpha_j p^{\alpha - e_j}
    \end{align*}
    Thus, we have
    \begin{equation*}
        \qty[g,f] = \gamma h(x,u) p^\alpha - \gamma \sum_{j=1}^{n} \pdv{g}{x_j} \alpha_j p^{\alpha - e_j}
    \end{equation*}
    which expresses the scalar multiplication $h(x,u) f$ modulo lower degree terms.
\end{proof}

An immediate consequence of \cref{lem:bracket_operators} is that one does not need the full algebra of strict contactomorphisms to generate polynomials in the momentum variables via repeated brackets with prolonged diffeomorphisms. Just the second degree monomials in $p$ are sufficient.

We finally note the following lemma regarding the degree-preserving nature of the full contact-Jacobi bracket.

\begin{lemma}\label{lem:bracket_degree}
     Given two polynomials $f,g \in C^{\infty}(J^1(\R^n))$ in the momentum variables $p$ of degree at most $k$ and $m$ respectively, their contact-Jacobi bracket $\qty[f,g]$ is a polynomial in $p$, of  degree at most $k+m$.
    \begin{proof}

    The contact-Jacobi bracket of $f$ and $g$ is given by
    \begin{equation*}
        \qty[f,g] = \qty{f,g} + \pdv{f}{u}E[g] - \pdv{g}{u}E[f],
    \end{equation*}
    where $\{f,g\}$ is the Poisson bracket and $E$ is the Euler operator.

    First, let us treat a monomial case for clarity. Let $f,g$ be monomials in $p$ of degree $m$ and $k$ respectively:
    \begin{equation*}
        f(x,p,u) = a(x,u) p^{\alpha}, \quad g(x,p,u) = b(x,u) p^{\beta}
    \end{equation*}

    The Poisson bracket term will either vanish or contribute a term of degree $k+m-1$ in $p$. The Euler operator, by \cref{prop:euler_op_properties} , preserves the degree of polynomials in $p$. Therefore, the terms $\pdv{f}{u}E[g]$ and $\pdv{g}{u}E[f]$ will also contribute terms of degree $k+m$ in $p$. Overall, the contact-Jacobi bracket $\qty[f,g]$ will be a polynomial in $p$ of degree at most $k+m$ under generic conditions.

    Now, let $f,g \in C^{\infty}(J^1(\R^n))$ be polynomials in the momentum variables $p$ of degree at most $m$ and $k$ respectively. We can express them as
    \begin{equation*}
        f = \sum_{\abs{\alpha} \leq m} f_{\alpha}(x,u)p^{\alpha}, \quad g = \sum_{\abs{\beta} \leq k} g_{\beta}(x,u)p^{\beta}
    \end{equation*}
    where $f_{\alpha}(x,u)$ and $g_{\beta}(x,u)$ are smooth functions of $(x,u)$. By our previous argument, the contact-Jacobi bracket of each pair of monomials $f_{\alpha}(x,u)p^{\alpha}$ and $g_{\beta}(x,u)p^{\beta}$ will be a polynomial in $p$ of degree at most $\abs{\alpha} + \abs{\beta}$. Since $\abs{\alpha} \leq m$ and $\abs{\beta} \leq k$, it follows that $\abs{\alpha} + \abs{\beta} \leq m + k$. Therefore, the contact-Jacobi bracket $\qty[f,g]$ will be a polynomial in $p$ of degree at most $m + k$, unless it vanishes.
\end{proof}
\end{lemma}

\subsection{Proof of Proposition 3.1}

Since polynomials in the momentum variables $p$ are dense in the space of smooth functions on $J^1(\R^n)$ in the $C^r$ norm by \cref{thm:density}, it suffices to show that any polynomial-in-$p$ Hamiltonian belongs to $\mfg$. By linearity of $\mfg$, it then suffices to show that any monomial of the form $\gamma f(x,u) p^\alpha$ with $\gamma\in\R$ and $\alpha$ a multi-index of finite degree, can be generated by repeated brackets of strict contact Hamiltonians and prolonged Hamiltonians. 

If $\abs{\alpha}\leq 1$, the monomial is already affine in $p$, and therefore belongs to the Lie algebra generated by prolonged Hamiltonians. We then need to address the case of $\abs{\alpha} \geq 2$. Monomials of the form $\gamma p^\alpha$ for $\gamma\in\R$ and $\alpha$ a multi-index of finite degree already belong to the Lie algebra generated by strict contact Hamiltonians. By \cref{lem:bracket_operators}, there exists an element of $d\in\mfpd$ such that
\begin{align*}
    \qty[d, \gamma p^\alpha] = \gamma f(x,u) p^\alpha + \text{lower degree terms}
\end{align*}
where $f(x,u)$ is an arbitrary smooth function of $(x,u)$.

In order to isolate the leading term $\gamma f(x,u) p^\alpha$ by removing the lower degree terms (by linearity), we iteratively apply the same generating procedure to the highest degree monomial among the lower degree terms. This yields a triangular system of equations, which can be solved recursively. Suppose that the highest degree of the lower degree terms is $k < \abs{\alpha}$. For each monomial of the form $\gamma' g(x,u) p^\beta$ with $\abs{\beta} = k$, we can find an element $d'\in\mfpd$ such that \begin{align*}
    \qty[d', \gamma' p^\beta] = \gamma' g(x,u) p^\beta + \text{lower degree terms}
\end{align*}
and we can remove the monomial $\gamma' g(x,u) p^\beta$ from the lower degree terms by linearity. We can repeat this procedure until the base case of $\abs{\beta}=1$ which is already handled, and we are left with only the leading term $\gamma f(x,u) p^\alpha$. This shows that any monomial of the form $\gamma f(x,u) p^\alpha$ can be generated by repeated brackets of strict contact Hamiltonians and prolonged Hamiltonians, which completes the proof. 

\textbf{Remark:} We have only used the scalar-multiplication property of \cref{lem:bracket_operators}. This allows us to solve the triangular system of equations that arises when trying to isolate the leading term \text{with only depth-1 brackets!}. Alternatively, one could also use the degree-raising and degree-lowering properties to solve the triangular system by induction on the degree of the monomials, which would require deeper brackets.

Note that, as defined, our two subalgebras $\mfsc$ and $\mfpd$ are enormous, and they intersect nontrivially. For example, any monomial of the form $f(x)p$ belongs to both subalgebras. If we made use of bracket raising and lowering operations, we could generate the entire algebra of polynomials in $p$ by repeated brackets of just the second degree monomials in $p$ with prolonged Hamiltonians, which is a much smaller set of generators. The subalgebras could possibly be coarsened further to a smaller set of generators, which may be of value from a practical perspective. Note, for example, that the generators from $\mfpd$ and $\mathrm{span}\qty{p^\alpha: \abs{\alpha}=2}$ form subalgebras that do not intersect, and would therefore generate a graded Lie algebra of polynomials in $p$ by repeated brackets.

\section{Fundamentals of Geometric Integration}
\label{ap:Geometric_Integration}

\subsection{Gr\"onwall Estimates}
\label{ap:Gronwall}

Our numerical approach is based on approximating an $r+1$-smooth Hamiltonian $H\in C^{r+1}(U)$, over a compact set $U\subset J^1(\R^n)$, by a polynomial surrogate $H^{(N)}$ in the momentum variables $p$ satisfying
\begin{align*}
    \norm{H - H^{(N)}}_{C^{r+1}(U)} < \varepsilon_N
\end{align*}
where the $C^{r+1}$ norm is defined as
\begin{equation*}
    \norm{H - H^{(N)}}_{C^{r+1}(U)} = \max_{0\leq k \leq r+1} \sup_{z\in U} \norm{D^k H(z) - D^k H^{(N)}(z)}
\end{equation*}

The approximation $H^{(N)}$ exists due to \cref{thm:density}, and can be obtained constructively by, for example, the Bernstein or Chebyshev polynomial approximations \cite{devore1993constructive}; polynomials in each $(x,u,p)$-coordinate are trivially polynomials in $p$. A Taylor approximation could also be valid locally, but would not guarantee a uniform approximation over the entire compact set $U$.

The main source of error in our numerical integrators is the error between the true flow of $H$ and the flow of $H^{(N)}$, which is a contactomorphism that can be approximated by our splitting algebra. This is governed by the following Gr\"onwall-type estimate which is standard in the dynamical systems literature, and can be derived by applying the standard Gr\"onwall inequality to the difference of the two flows.

\begin{prop}\label{prop:gronwall}
    Let $H\in C^{r+1}(U)$ be an $r+1$-smooth Hamiltonian defined on a compact set $U\subset J^1(\R^n)$, and let $H^{(N)}$ be a polynomial in the momentum variables that approximates $H$ in the $C^{r+1}$ norm as above. Assume that for $0\leq t\leq T$ the flows of $H$ and $H^{(N)}$ exist and remain in $U$. Let $L$ be a Lipschitz constant for the vector fields generated by $H$ and $H^{(N)}$ on $U$, and assume that each derivative of $H$ and $H^{(N)}$ up to order $r+1$ is bounded on $U$:
    \begin{equation*}
        \norm{D^k H}_{C^0(U)} \leq M_k, \quad \norm{D^k H^{(N)}}_{C^0(U)} \leq M_k^{(N)} \quad \text{for } 0\leq k \leq r+1
    \end{equation*} 
    Then, the flows $\Phi_H^t$ and $\Phi_{H^{(N)}}^t$ satisfy the following estimate for all $0 \leq t \leq T$:
    \begin{equation}
        \norm{\Phi_H^t - \Phi_{H^{(N)}}^t}_{C^r(U)} \leq C_r P_r(t) e^{Lt}\varepsilon_N
    \end{equation}
    where $C_r$ is a constant depending on $r$ and the derivative bounds $M_k$ and $M_k^{(N)}$, and $P_r(t)$ is a polynomial in $t$ of degree at most $r$, whose coefficients depend on the same bounds together with the Lipschitz constant $L$.
    \begin{proof}[Sketch]
        The proof relies on the standard Gr\"onwall inequality applied sequentially to the derivatives of the difference of the flows $\Phi_H^t$ and $\Phi_{H^{(N)}}^t$, where the contact vector fields generated by the two Hamiltonians satisfy:
        \begin{align*}
            \norm{X_H-X_{H^{(N)}}}_{C^r(U)}\leq C \norm{H-H^{(N)}}_{C^{r+1}(U)} \leq C \varepsilon_N
        \end{align*}
        The result is a triangular system of inequalities that bounds the difference of the flows in terms of the initial error $\varepsilon_N$ and the Lipschitz constant $L$, as well as the $C^r$ norms of $H$ and $H^{(N)}$, which contribute to the constant $C_r$ and the polynomial $P_r(t)$. See \cite[Chapter 5]{hartman2002ordinary} or \cite[Chapter 4]{abraham2012manifolds} for a more detailed argument.
    \end{proof}
\end{prop}

\subsection{Backward Error Analysis}

The Baker-Campbell-Hausdorff (BCH) formula is a fundamental tool in the analysis of Lie group flows and their numerical approximations. It provides a way to express the composition of exponentials of Lie algebra elements in terms of a single exponential, which is crucial for understanding the behavior of numerical integrators that are based on Lie group methods.

\begin{lemma}[BCH]
    \label{lem:bch}
    Let $\mathfrak{g}$ be a Lie algebra and let $X,Y \in \mathfrak{g}$. Then, there exists a formal series expansion for the element $Z \in \mathfrak{g}$ such that
    \begin{equation}
        e^X e^Y = e^Z
    \end{equation}
    where $Z$ can be expressed as a series in $X$ and $Y$ involving nested commutators:
    \begin{equation}
        Z = X + Y + \frac{1}{2}[X,Y] + \frac{1}{12}[X,[X,Y]] - \frac{1}{12}[Y,[X,Y]] + \cdots
    \end{equation}
    where the series continues with higher-order nested commutators of $X$ and $Y$.
\end{lemma}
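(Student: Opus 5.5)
The BCH statement is formal and purely algebraic. I would prove it in the free setting and then specialize to $\mfg$. Let $A=\mathbb{Q}\langle\langle X,Y\rangle\rangle$ be the completed free associative algebra in two noncommuting indeterminates. Define $e^X=\sum_k X^k/k!$ and, for elements with zero constant term, $\log(1+W)=\sum_{m\ge1}(-1)^{m+1}W^m/m$. Both series converge in the $(X,Y)$-adic topology, since $W$ has no constant term. Then set
\[
Z=\log(e^Xe^Y).
\]
This is a well-defined formal series with $e^Z=e^Xe^Y$ and no convergence issues. Expanding to low degree gives the stated terms: $X+Y+\tfrac12(XY-YX)+\tfrac1{12}(X^2Y-2XYX+YX^2)+\dots$. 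These are routine to match against $\tfrac12[X,Y]+\tfrac1{12}[X,[X,Y]]-\tfrac1{12}[Y,[X,Y]]$.

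The substantive step is showing that every homogeneous component of $Z$ is a Lie polynomial, so that $Z$ lies in the completed free Lie algebra. For this I would use Friedrichs' criterion. Put the coproduct $\Delta$ on $A$ determined by $\Delta X=X\otimes1+1\otimes X$ and $\Delta Y=Y\otimes1+1\otimes Y$. Then:
\begin{itemize}
\item The primitive elements, $\Delta P=P\otimes1+1\otimes P$, are exactly the Lie series.
\item Group-like elements, $\Delta G=G\otimes G$, are exactly exponentials of primitives.
\end{itemize}
Since $X$ and $Y$ are primitive, $e^X$ and $e^Y$ are group-like. The group-like elements form a group, so $e^Xe^Y$ is group-like. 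Applying $\log$, and using that $\log$ converts $G\otimes G=(G\otimes1)(1\otimes G)$ (commuting factors) into $\log G\otimes1+1\otimes\log G$, shows that $Z$ is primitive, hence a Lie series.

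I expect the main obstacle to be the primitive-equals-Lie direction of Friedrichs' theorem. I would either cite it (e.g.\ via the Dynkin--Specht--Wever projection $\theta(x_1\cdots x_m)=\tfrac1m[\cdots[x_1,x_2],\dots,x_m]$, which fixes homogeneous Lie polynomials of degree $m$ and maps primitives of degree $m$ to themselves) or give a short argument through Poincaré--Birkhoff--Witt.

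Once $Z=\sum_m Z_m(X,Y)$ is established with each $Z_m$ a Lie polynomial, the lemma follows for any Lie algebra $\mfg$. Substitute $X,Y\in\mfg$ into each $Z_m$, which is legitimate because only brackets appear. This gives a formal series in $\mfg$, graded by degree, or by powers of $t$ after rescaling $X\mapsto tX$ and $Y\mapsto tY$. In the contact setting, the identity $e^Xe^Y=e^Z$ is then read as an equality of formal power series in $t$, or as an asymptotic expansion of the composed flows. It holds as an exact identity only where the series converges, which the paper does not need.
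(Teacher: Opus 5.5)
The paper does not prove this lemma. It states the classical Baker--Campbell--Hausdorff theorem and uses it as a black box; its only justification is the remark that the result is ``entirely Lie-theoretic.'' Your proposal supplies the standard Friedrichs-criterion proof in the completed free algebra $\mathbb{Q}\langle\langle X,Y\rangle\rangle$, and it is correct:
\begin{itemize}
\item $e^X$ and $e^Y$ are group-like, so $e^Xe^Y$ is group-like.
\item The $\log$ of a group-like element is primitive, because $G\otimes 1$ and $1\otimes G$ commute.
\item Primitive elements are Lie.
\end{itemize}
For the last step, the fact you need to cite or prove is that $\theta(P)=P$ for every homogeneous primitive $P$ of degree $m$. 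It is not enough that $\theta$ preserves primitives. The identity $\theta(P)=P$ holds because left-normed bracketing $x_1\cdots x_m\mapsto[\cdots[x_1,x_2],\dots,x_m]$ equals the convolution $S\star N$ of the antipode $S$ with the grading operator $N$. Then $(S\star N)(P)=S(P)N(1)+S(1)N(P)=mP$.

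Compared with the paper's bare citation, your route buys precision where the lemma's wording is loose. It says where $Z$ lives: the completed free Lie algebra, or $\mfg[[t]]$ after rescaling. It also proves that every homogeneous component is a Lie polynomial. That is exactly what the Contact BCH corollary relies on when it asserts that the modified Hamiltonian's coefficients are iterated contact--Jacobi brackets. The citation buys brevity. Other classical proofs, such as Eichler's elementary induction on degree or the Poincar\'e--Hausdorff ODE argument via the derivative of the exponential map, would serve equally well.

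Two small repairs are needed.
\begin{itemize}
\item Your displayed cubic term lists only $\tfrac1{12}(X^2Y-2XYX+YX^2)=\tfrac1{12}[X,[X,Y]]$. The expansion of $\log(e^Xe^Y)$ also contains $\tfrac1{12}(XY^2-2YXY+Y^2X)=\tfrac1{12}[Y,[Y,X]]$. That second term is the source of the $-\tfrac1{12}[Y,[X,Y]]$ in the statement.
\item Substituting $X,Y\in\mfg$ into the Lie polynomials $Z_m$ transports $Z$, but not the exponentials, which mean nothing in an abstract Lie algebra. To transport the identity itself, push $e^Xe^Y=e^Z$ forward along the algebra homomorphism $\mathbb{Q}\langle\langle X,Y\rangle\rangle\to U(\mfg)[[t]]$ given by $X\mapsto tX$, $Y\mapsto tY$. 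This map exists by the universal property of the free algebra. It is continuous because words of length $m$ land in $t^mU(\mfg)$. The result is $e^{tX}e^{tY}=e^{Z(t)}$ in $U(\mfg)[[t]]$, with $Z(t)\in\mfg[[t]]$.
\end{itemize}

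For the contact application, the realization is by Lie derivatives acting on functions, and pullbacks compose in reverse order. So $\Phi^t_X\circ\Phi^t_Y$ corresponds to $e^{t\mathcal{L}_Y}e^{t\mathcal{L}_X}$. Also, for vector fields the series need not converge even for analytic data. Your phrase ``exact where it converges'' is therefore better replaced by the formal, or exponentially accurate truncated, reading the paper adopts in its backward-error appendix.
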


This result is powerful because it is entirely Lie-theoretic and therefore applies to arbitrary Lie algebras. In particular, many results from the geometric integration of symplectic systems that rely on BCH transfer directly to our setting, with the contact-Jacobi bracket replacing the Poisson bracket.

In particular, we obtain the following formal statement:

\begin{corollary}[Contact BCH]
    \label{cor:contact_BCH}
    Let $H_1,...,H_m \in C^{\infty}(J^1(\R^n))$ be a collection of contact Hamiltonians, and let $\exp(hH_i)$ denote the time-$h$ map generated by $H_i$. Then, there exists a formal contact Hamiltonian series $\tilde{H}(h) \in C^{\infty}(J^1(\R^n))[[h]]$ such that
    \begin{equation}
        \exp(hH_m) \circ... \circ \exp(hH_2) \ \circ \exp(hH_1) = \exp(h\tilde{H}(h))
    \end{equation}
    where $\tilde{H}(h)$ is a formal series in $h$, whose coefficients are determined by the $H_i$ and their nested contact-Jacobi brackets, analogous to the BCH expansion.
\end{corollary}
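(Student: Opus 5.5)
The plan is to derive the statement directly from \cref{lem:bch} by establishing a correspondence between compositions of contact flows and exponentials in a Lie algebra. Two facts drive it: contact Hamiltonian vector fields form a Lie algebra under the commutator of vector fields, and the map $H\mapsto X_H$ is a Lie algebra (anti-)homomorphism onto $\mfc(J^1(\R^n))$, with the contact-Jacobi bracket on the Hamiltonian side. Under this identification the time-$h$ map $\exp(hH_i)=\Phi^h_{H_i}$ acts on smooth observables by pullback, $(\Phi^h_{H_i})^*g=\exp(h\mcL_{X_{H_i}})g$, as a formal series in $h$. The composition therefore acts on observables as a product of formal exponentials of derivations of $C^\infty(J^1(\R^n))$, with the order reversed by pullback.

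The key steps are as follows.

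First, I would verify the homomorphism property $[X_H,X_K]=\pm X_{[H,K]}$. The coordinate-free characterisation in \cref{ap:Coordinate_Free_Contact} gives this directly: $\alpha([X_H,X_K])$ can be computed with Cartan's formula and $\mcL_{X_H}\alpha=-(R_\alpha H)\alpha$. Comparing with $-\alpha(X)$ then fixes the Hamiltonian of the bracket to be the contact-Jacobi bracket, up to the sign convention already fixed in the paper.

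Second, I would work in the completed free Lie algebra over $\R[[h]]$. For $m=2$, \cref{lem:bch} gives $e^{hX}e^{hY}=e^{Z(h)}$, where $Z(h)=h(X+Y)+\tfrac{h^2}{2}[X,Y]+\dots$ and each homogeneous degree-$j$ piece of $Z$ is a finite Lie polynomial multiplied by $h^j$. Induction on $m$ then handles general compositions: one absorbs $\exp(hH_{m})$ into the already-combined series $\exp(h\tilde H_{m-1}(h))$. This is legitimate because BCH applies to formal series whose lowest $h$-order is positive, and the coefficients at each order in $h$ are finite sums.

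Third, I would push the identity through the homomorphism. This maps each Lie polynomial in the derivations $\mcL_{X_{H_i}}$ to a Lie polynomial in the $H_i$ under the contact-Jacobi bracket, which yields $\tilde H(h)\in C^\infty(J^1(\R^n))[[h]]$. The equality of the composition with $\exp(h\tilde H(h))$ then holds in the sense of formal series of operators on observables, i.e.\ order by order in the Taylor expansion in $h$.

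The main obstacle is the meaning of "$=$". In infinite dimensions the contactomorphism group has no genuine exponential and BCH does not converge, so the statement can only be formal. The point needing care is that the Taylor coefficients in $h$ of $g\circ\Phi^h_{H_m}\circ\dots\circ\Phi^h_{H_1}$ agree with those of $\exp(h\mcL_{X_{\tilde H}})g$ at every order. I would establish this by expanding each flow pullback as $\sum_k \tfrac{h^k}{k!}\mcL_{X_{H_i}}^k g$, which is valid as a Taylor expansion at each point where the flow exists for small $|h|$, and then invoking the purely algebraic identity in the associative algebra of differential operators. Finally, I would record the sign and ordering conventions explicitly, so that the reversed order of composition under pullback matches the bracket sign convention.
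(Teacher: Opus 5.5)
Your proposal is correct and takes the same route as the paper, which gives no separate proof and simply applies \cref{lem:bch} in the Lie algebra of contact Hamiltonians, with the contact--Jacobi bracket in place of the Poisson bracket. What you add are the steps the paper leaves implicit: that $H\mapsto X_H$ intertwines the contact--Jacobi bracket with the vector-field commutator (with the paper's conventions it is in fact a homomorphism, $[X_H,X_K]=X_{[H,K]}$), and that the equality is to be read order by order in $h$, as an identity of Lie series acting on observables by pullback.
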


The Hamiltonian $\tilde{H}$ is often referred to as the \emph{modified Hamiltonian} or \emph{effective Hamiltonian} of the composition of flows. It captures the cumulative effect of the individual flows and their interactions, and is crucial for understanding the long-term behavior of numerical integrators based on compositions of flows. Without additional regularity assumptions, the modified Hamiltonian $\tilde{H}$ is only guaranteed to be a formal power series in the $H_i$ and their brackets, and may not converge.

Under analyticity assumptions, one has the usual backward-error/shadowing conclusion; we record a contact-formulation here for completeness:

\begin{theorem}[Analytic BEA] 
    \label{thm:analytic_BEA}
    Let $H$ be a real analytic Hamiltonian defined on a compact subset $U\subset J^1(\R^n)$, which extends holomorphically to a complex neighborhood of $U$ with radius of convergence $\rho > 0$. Denote its flow by $\Phi_H^t$, and assume it remains in $U$ for all $t \leq T$. Let $\Psi_h$ be an analytic contact one-step numerical integrator that approximates the flow of $H$ with local error of order $p$, i.e., $\Psi_h = \Phi_H^h + O(h^{p+1})$. Then, there exists a modified Hamiltonian $\tilde{H}_h$, which is also real analytic on $U'\subseteq U$ and extends holomorphically to a complex neighborhood of $U'$ with radius of convergence $0<\rho'\leq \rho$, such that $$\tilde{H}_h = H + O(h^p).$$ Furthermore, assume that the iterates $\Psi_h^n$ and $\Phi_{\tilde{H}_h}^{nh}$ remain in $U'$ for all $nh\leq T$. Then,
    there exist constants $C,c$ and a positive integer $N$ satisfying $N \leq T/h$ such that for all $n \leq N$ we have
    \begin{equation}
        \norm{\Psi^{n}_h - \Phi^{nh}_{\tilde{H}_h}}_{\rho'} \leq Cn e^{-c/h}
    \end{equation}
\end{theorem}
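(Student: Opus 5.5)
The plan is to adapt the standard analytic backward-error analysis for symplectic one-step maps (Benettin--Giorgilli; Hairer--Lubich--Wanner, Ch.~IX) to the contact setting. The key observation is that every step of that argument is either Lie-theoretic or a Cauchy-estimate argument on vector fields. Contactness enters only at the very end, when we check that the modified vector field is contact Hamiltonian.

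\textbf{Step 1: formal modified field.} Since $\Psi_h$ is analytic in $(z,h)$, expand $\Psi_h(z)=z+hX_H(z)+h^2 d_2(z)+\cdots$. Then construct the formal modified vector field
\[
\tilde X_h = X_H + h^{p}f_{p+1}+h^{p+1}f_{p+2}+\cdots
\]
recursively, requiring that the time-$h$ Taylor expansion of its flow match $\Psi_h$ order by order. The first correction appears at order $h^p$ in the field because $\Psi_h=\Phi_H^h+O(h^{p+1})$. This is purely the standard recursion.

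\textbf{Step 2: the modified field is contact Hamiltonian.} We must show each truncation $\tilde X_h^{[N]}$ is contact, i.e.\ $\mathcal L_{\tilde X}\alpha=\zeta\alpha$. Argue by induction: if $f_{p+1},\dots,f_{j}$ are contact, then the time-$h$ flow $\Phi^h_{[j]}$ of the truncated field is a contactomorphism. The map $(\Phi^h_{[j]})^{-1}\circ\Psi_h$ is also a contactomorphism, and it equals $\mathrm{id}+h^{j+1}f_{j+1}+O(h^{j+2})$. Differentiating the identity $\phi^*\alpha=e^{\lambda}\alpha$ at leading order in $h$ shows that $f_{j+1}$ is a contact vector field. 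By the correspondence $H=-\alpha(X)$ from \cref{ap:Coordinate_Free_Contact}, it therefore has a Hamiltonian $\tilde H_{j+1}=-\alpha(f_{j+1})$. This is simpler than the symplectic case, because no closedness or topology issue arises: contact fields have global Hamiltonians given by a formula. Set $\tilde H_h^{[N]}=H+\sum_{j\ge p}h^j\tilde H_{j+1}$, which is analytic and equal to $H+O(h^p)$.

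\textbf{Step 3: analytic estimates and optimal truncation (the main obstacle).} Using Cauchy estimates on nested complex neighborhoods of $U$ with radii shrinking from $\rho$ to $\rho'$, bound the coefficients as $\|f_j\|\le C(\eta j/\rho)^{j}$, following HLW Thm.~IX.7.2. Then choose $N\sim c'/h$ so that the one-step defect satisfies $\|\Psi_h-\Phi^h_{\tilde H^{[N]}}\|_{\rho'}\le C h e^{-c/h}$. The delicate point is that the contact map $X\mapsto -\alpha(X)$, and its inverse $H\mapsto X_H$, lose one derivative. The inverse direction involves first derivatives of $H$, so the estimates should be carried out on the vector fields and converted to $\tilde H$ only at the end. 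This costs one additional Cauchy estimate, which is absorbed into the shrinkage $\rho\to\rho'$.

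\textbf{Step 4: global shadowing.} Given the per-step defect $\delta=Che^{-c/h}$ and the assumption that the iterates stay in $U'$, telescope:
\[
\Psi_h^n-\Phi^{nh}_{\tilde H}=\sum_{k}\Phi^{(n-k-1)h}_{\tilde H}\circ\Psi_h\circ\Psi_h^{k}-\Phi^{(n-k)h}_{\tilde H}\circ\Psi_h^{k}.
\]
Bound each term by the Lipschitz constant of $\Phi_{\tilde H}$ on $[0,T]$ (a Gr\"onwall bound as in \cref{prop:gronwall}) times $\delta$. Absorbing $e^{LT}$ and the factor $h$ into the constants gives the claimed bound $Cne^{-c/h}$ for all $n\le N\le T/h$.
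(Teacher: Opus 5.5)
Your proposal is correct, and it is exactly the standard Benettin--Giorgilli / Hairer--Lubich--Wanner analytic backward-error argument that the paper invokes but never proves: the theorem is only ``recorded for completeness'' as the usual symplectic result transported through the Lie-theoretic BCH structure. Your Step~2 (each modified coefficient $f_{j+1}$ is a contact field, hence equal to $X_{\tilde H_{j+1}}$ for the globally defined $\tilde H_{j+1}=-\alpha(f_{j+1})$) supplies precisely the contact-specific point the paper leaves implicit.

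Two minor refinements:
\begin{itemize}
\item The map $X\mapsto-\alpha(X)$ is algebraic and loses no derivative. Only $H\mapsto X_H$ does, and only once, when bounding $X_H$ from $H$ at the start.
\item To state Step~4 in the complex norm $\|\cdot\|_{\rho'}$, you must shrink the complex neighborhood (e.g.\ by a factor $e^{-LT}$) so that the complexified iterates stay where the one-step defect bound holds. This is permitted, since $\rho'$ is only asserted to exist.
\end{itemize}
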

Note that the norm here is the analytic supremum norm associated with the radius $\rho'$. The constants $C$ and $c$ only depend on the analyticity properties of $H$ and the local error of the integrator, and are independent of the step size $h$. Since $N = O(T/h)$, the estimate controls the error on any fixed finite time interval by taking $h$ sufficiently small. Under further assumptions on the confinement of the flow and the integrator iterates within an invariant compact analytic domain $K\subset U$, one may expect to extend the error estimate to all $nh \leq e^{\kappa/h}$, where $\kappa$ is a positive constant. Such results are standard in the symplectic setting, and would imply that the integrator faithfully reproduces the flow of the modified Hamiltonian $\tilde{H}_h$ over exponentially long times.

For symplectic integrators, analytic BEA is often interpreted through near-conservation of a modified energy. In the contact setting, the analogous quantity is the conformal factor rather than an exactly conserved energy. Since this consequence is useful later, we record the corresponding conformal-factor estimate below.

\begin{corollary}[Modified Conformal Factor]\label{thm:conformal_tracking}
    Suppose that $H$ and $\tilde{H}_h$ are the original and modified contact Hamiltonians appearing in \cref{thm:analytic_BEA}, and that their flows remain in a compact set $U\subset J^1(\R^n)$ for all $t \leq T$. Then, the conformal factors $\lambda(t)$ and $\tilde{\lambda}(t)$ of the flows generated by $H$ and $\tilde{H}_h$, respectively, satisfy
    \begin{equation}
        \tilde{\lambda}(t) = \lambda(t) + O(h^p t)
    \end{equation}
    for all $t \leq T$.
    \begin{proof}
        Under the assumptions of the Analytic BEA theorem, we have $\tilde{H}_h = H + O(h^p)$ in an analytic norm. By a Cauchy estimate, it follows that the difference $\tilde{H}_h - H$ and all of its derivatives are also bounded by $O(h^p)$ in a $C^r$ norm for any finite $r$, on a slightly smaller compact set $K\subset U$. Taking the partial derivative with respect to $u$, we therefore obtain
    \begin{equation}
        \pdv{\tilde{H}_h}{u} = \pdv{H}{u} + O(h^p)
    \end{equation}
    Now, let $\Phi_H^t$ and $\Phi_{\tilde{H}_h}^t$ be the flows generated by $H$ and $\tilde{H}_h$, with conformal factors $e^{\lambda(t)}$ and $e^{\tilde{\lambda}(t)}$, respectively. The logarithmic conformal factors are given by
    \begin{equation*}
        \lambda(t)(z) = -\int_0^t \partial_u H\qty(\Phi_H^s(z)) ds\qc
        \tilde{\lambda}(t)(z)= -\int_0^t \partial_u \tilde{H}_h\qty(\Phi_{\tilde{H}_h}^s(z)) ds
    \end{equation*}
    with $z\in U$. Using the fact that $\partial_u{\tilde{H}_h} = \partial_u{H} + O(h^p)$, we can express $\tilde{\lambda}(t)$ in terms of $\lambda(t)$ as follows:
    \begin{align*}
        \tilde{\lambda}(t)(z) &= -\int_0^t \partial_u{\tilde{H}_h}\qty(\Phi_{\tilde{H}_h}^s(z)) ds \\
        &= -\int_0^t \qty( \partial_u{H}(\Phi_{\tilde{H}_h}^s(z)) + O(h^p) ) ds \\
        &= -\int_0^t \partial_u{H}(\Phi_{\tilde{H}_h}^s(z)) ds + O(h^p t)
    \end{align*}
    Now, since $\Phi_{\tilde{H}_h}^s$ approximates $\Phi_H^s$ with an error of order $O(h^p)$ for all $s \leq t$ (the same Gr\"onwall-type argument as in \cref{prop:gronwall} applies to $H$ and $\tilde{H}_h$), we can express $\Phi_{\tilde{H}_h}^s$ in terms of $\Phi_H^s$ as follows:
    \begin{equation*}
        \Phi_{\tilde{H}_h}^s(z) = \Phi_H^s(z) + O(h^p s),
    \end{equation*}
    and thus, substituting this into the expression for $\tilde{\lambda}(t)$, we get
    \begin{align*}
        \tilde{\lambda}(t)(z) &= -\int_0^t \partial_u{H}(\Phi_H^s(z) + O(h^p s)) ds + O(h^p t) \\
        &= -\int_0^t \qty( \partial_u{H}(\Phi_H^s(z)) + O(h^p s) ) ds + O(h^p t) \\
        &= -\int_0^t \partial_u{H}(\Phi_H^s(z)) ds + O(h^p t) \\
        &= \lambda(t)(z) + O(h^p t)
    \end{align*}
    \end{proof}
\end{corollary}

The previous corollary is again a Gr\"onwall-type estimate, giving a bound on the difference between the conformal factors of two flows generated by nearby Hamiltonians. The discrete analogue then follows from the same shadowing argument, so we record it here for completeness.

\begin{corollary}[Discrete Conformal Tracking]\label{cor:discrete_conformal_tracking}
     Let $\Psi_h$ be the time-$h$ map of an analytic contact one-step numerical integrator that approximates the flow of a real analytic Hamiltonian $H$ with local error of order $p$. Assume that $H$, $\tilde{H}_h$, and $\Psi_h$ satisfy the assumptions of \cref{thm:analytic_BEA}, and that the exact flow $\Phi_H^t$, the numerical iterates $\Psi_h^k$, and the shadow iterates $\Phi_{\tilde{H}_h}^{kh}$ remain in a compact set $U\subset J^1(\R^n)$ for all $t \leq T$ and $kh \leq T$. Then, the cumulative discrete conformal factor $\sigma_h^n$ of $\Psi_h$ after $n$ steps satisfies
    \begin{align*}
        \sigma_h^n(z) &= \tilde{\lambda}(nh)(z)+O((T/h)e^{-c/h})
    \end{align*}
    for all $n \leq N$, where $\tilde{\lambda}(t)$ is the conformal factor of the flow generated by the modified Hamiltonian $\tilde{H}_h$ and $c$ is a positive constant independent of $h$. In particular, since $\tilde{\lambda}(t) = \lambda(t) + O(h^p t)$, we have
    \begin{equation}
        \sigma_h^n(z) = \lambda(nh)(z) + O(h^p T) + O((T/h)e^{-c/h})
    \end{equation}
     where $\lambda(t)$ is the conformal factor of the flow generated by $H$.
\end{corollary}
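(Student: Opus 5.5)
The plan is to compare the numerical iterates with the exact flow of the modified Hamiltonian $\tilde{H}_h$ supplied by \cref{thm:analytic_BEA}, and then transfer that comparison to conformal factors.

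\textbf{Step 1: define the discrete conformal factor.} Each step $\Psi_h$ is a contactomorphism, so $\Psi_h^*\alpha = e^{\mu_h}\alpha$ for some function $\mu_h$. The cumulative factor is then the telescoping Birkhoff-type sum
\begin{equation*}
\sigma_h^n(z)=\sum_{k=0}^{n-1}\mu_h(\Psi_h^k(z)),
\end{equation*}
which follows from the cocycle identity $(\Psi_h^n)^*\alpha = e^{\sigma_h^n}\alpha$. On the continuous side, the same cocycle identity applied to the flow of $\tilde{H}_h$ gives
\begin{equation*}
\tilde{\lambda}(nh)(z)=\sum_{k=0}^{n-1}\tilde{\lambda}(h)\bigl(\Phi_{\tilde{H}_h}^{kh}(z)\bigr).
\end{equation*}
The task therefore reduces to comparing two sums of $n\le N\le T/h$ terms.

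\textbf{Step 2: one-step comparison.} I would show that the one-step factors agree, $\mu_h = \tilde{\lambda}(h) + O(e^{-c/h})$, uniformly on a slightly smaller compact set. The conformal factor is determined by the map through
\begin{equation*}
e^{\mu}=\frac{\Psi^*\alpha(R_\alpha)}{\alpha(R_\alpha)}=\Psi^*\alpha(\partial_u).
\end{equation*}
This quantity depends only on first derivatives of the map. Theorem \ref{thm:analytic_BEA} with $n=1$ gives $\|\Psi_h-\Phi^h_{\tilde{H}_h}\|_{\rho'}\le Ce^{-c/h}$ in an analytic norm. A Cauchy estimate on a shrunken domain then bounds the first-derivative difference by the same quantity, up to a constant. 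Because both factors $e^{\mu}$ are bounded away from zero on the compact set, taking logarithms preserves the $O(e^{-c/h})$ bound.

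\textbf{Step 3: summation.} Write each term of the difference as
\begin{equation*}
\mu_h(\Psi_h^k z)-\tilde{\lambda}(h)(\Phi^{kh}_{\tilde{H}_h}z).
\end{equation*}
Split it into two pieces:
\begin{itemize}
\item $\mu_h(\Psi_h^k z)-\tilde{\lambda}(h)(\Psi_h^k z)$, which is $O(e^{-c/h})$ by Step 2 since the iterates stay in $U$;
\item $\tilde{\lambda}(h)(\Psi_h^k z)-\tilde{\lambda}(h)(\Phi^{kh}_{\tilde{H}_h}z)$, which is at most $\mathrm{Lip}(\tilde{\lambda}(h))\cdot Cke^{-c/h}$ by Theorem \ref{thm:analytic_BEA}.
\end{itemize}
The key observation for the second piece is that $\tilde{\lambda}(h)=-\int_0^h\partial_u\tilde{H}_h\circ\Phi^s\,ds$ is itself $O(h)$ in $C^1$, so its Lipschitz constant is $O(h)$. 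The second piece is therefore $O(hk e^{-c/h})$.

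Summing over $k<n\le T/h$, the first pieces contribute $O((T/h)e^{-c/h})$. The second pieces contribute
\begin{equation*}
O\Bigl(h\cdot (T/h)^2 e^{-c/h}\Bigr)=O\bigl((T^2/h)e^{-c/h}\bigr),
\end{equation*}
which can be absorbed into the stated bound after slightly decreasing $c$ and enlarging constants, since $T$ is fixed. This gives the first claim.

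\textbf{Step 4: second claim.} Add Corollary \ref{thm:conformal_tracking}, which gives $\tilde{\lambda}(nh)=\lambda(nh)+O(h^p\,nh)$, together with $nh\le T$. This yields the extra $O(h^pT)$ term.

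\textbf{Main obstacle.} I expect Step 2 to be the hardest part: making the transfer from closeness of maps to closeness of conformal factors rigorous. The analytic norm in Theorem \ref{thm:analytic_BEA} controls the maps themselves, so a Cauchy-estimate domain shrinkage is needed to control derivatives, and one must check that $\Psi_h^*\alpha(\partial_u)$ stays bounded below uniformly in $h$. I would obtain that lower bound from $\mu_h=O(h)$, which follows from the local-order assumption.
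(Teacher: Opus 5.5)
Your proposal is correct and follows essentially the same route as the paper's proof. Both use the cocycle sum for $\sigma_h^n$, a one-step comparison of $e^{\sigma_h}$ with $e^{\tilde{\lambda}(h)}$ (obtained by evaluating $\Psi_h^*\alpha$ and $(\Phi^h_{\tilde{H}_h})^*\alpha$ on the Reeb field $\partial_u$ after a Cauchy estimate on the exponential shadowing bound), the replacement of $\Psi_h^k(z)$ by $\Phi^{kh}_{\tilde{H}_h}(z)$, and summation via the flow property.

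The one place you are more careful than the paper is Step 3. There your $O(h)$ Lipschitz bound on $\tilde{\lambda}(h)$ controls the $k$-dependent shadowing error $Cke^{-c/h}$, which the paper simply records as $O(e^{-c/h})$ per term. Since $T$ is fixed, your resulting $O((T^2/h)e^{-c/h})$ is absorbed into the constant, so you do not actually need to decrease $c$.
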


\begin{proof}
    We consider the discrete time-$h$ map $\Psi_h$ generated by the integrator, which has an associated discrete conformal factor $e^{\sigma_h}(z)$. The cumulative discrete conformal factor after $n$ steps is given by
    \begin{equation}
        \sigma_h^n(z) = \sum_{k=0}^{n-1} \sigma_h(\Psi_h^k(z))
    \end{equation}
    Since the Analytic BEA theorem gives an exponential shadowing estimate for $\Psi_h$ and $\Phi_{\tilde{H}_h}^t$ in an analytic norm, a Cauchy bound again converts this into a $C^r$ estimate for any finite $r$ on a slightly smaller compact set $K\subset U$. Thus, we can express $\Psi_h^k$ in terms of $\Phi_{\tilde{H}_h}^{kh}$ as follows:
    \begin{equation*}
        \norm{\Psi_h^k - \Phi_{\tilde{H}_h}^{kh}}_{C^r(K)} \leq C ke^{-c/h}.
    \end{equation*}
    Consequently,
    \begin{align*}
        \norm{\Psi_h^*\alpha - (\Phi_{\tilde{H}_h}^h)^*\alpha}_{C^0(K)} &\leq C' e^{-c/h}
    \end{align*}
    with $C'$ depending on the $C^1$ norm of $\alpha$ over $K$ along with the previous bound. Letting $R=\pdv{u}$ be the Reeb vector field on $J^1(\R^n)$, we directly evaluate:
    \begin{align*}
        \abs{e^{\sigma_h}-e^{\tilde{\lambda}(h)}}&=\abs{(e^{\sigma_h} - e^{\tilde{\lambda}(h)})\alpha R}\\&=\abs{(\Psi_h^*\alpha) R - \qty((\Phi_{\tilde{H}_h}^h)^*\alpha) R}  \\
        &\leq C' e^{-c/h}
    \end{align*}
    It follows that the one-step conformal factor $\sigma_h$ satisfies
    \begin{align*}
        \sigma_h(z) &= \tilde{\lambda}(h)(z) + O(e^{-c/h})\\
        &=-\int_0^h \partial_u{\tilde{H}_h}\qty(\Phi_{\tilde{H}_h}^s(z)) ds + O(e^{-c/h}).
    \end{align*}
    Applying the same argument to the iterates of $\Psi_h$, we obtain:
    \begin{align*}
        \tilde{\lambda}(h)(\Psi_h^k(z)) &= -\int_0^h \partial_u{\tilde{H}_h}\qty(\Phi_{\tilde{H}_h}^s(\Psi_h^k(z))) ds\\
        &=-\int_0^h \partial_u{\tilde{H}_h}\qty(\Phi_{\tilde{H}_h}^s(\Phi_{\tilde{H}_h}^{kh}(z))) ds + O(e^{-c/h}) \\
        &=-\int_0^h \partial_u{\tilde{H}_h}\qty(\Phi_{\tilde{H}_h}^{s+kh}(z)) ds + O(e^{-c/h})\\
        &= \tilde{\lambda}(h)(\Phi_{\tilde{H}_h}^{kh}(z)) + O(e^{-c/h})
    \end{align*}
    We therefore conclude that the cumulative discrete conformal factor $\sigma_h^n$ satisfies
    \begin{align*}
    \sigma_h^n(z) &= \sum_{k=0}^{n-1} \sigma_h(\Psi_h^k(z)) \\
    &=\sum_{k=0}^{n-1}\tilde{\lambda}(h)(\Phi_{\tilde{H}_h}^{kh}(z)) + O(n e^{-c/h}) \\
    &=-\sum_{k=0}^{n-1} \int_0^h \partial_u{\tilde{H}_h}\qty(\Phi_{\tilde{H}_h}^{s+kh}(z)) ds + O((T/h)e^{-c/h}) \\
    &=-\int_0^{nh} \partial_u{\tilde{H}_h}\qty(\Phi_{\tilde{H}_h}^{s}(z)) ds + O((T/h)e^{-c/h}) \\
    &= \tilde{\lambda}(nh)(z) + O((T/h)e^{-c/h})
    \end{align*}

    \end{proof}

\section{Extended Numerical Results}
\label{ap:Extended_Numerical}

\subsection{Damped Harmonic Oscillator}
\label{ap:DHO_extended}

While our theoretical results apply to an arbitrary choice of Hamiltonian, the representation one obtains via the generators of the splitting algebra is not unique. Thus, while the \textit{order} of the integrator may be common, there is still a significant design choice to be made in the choice of splitting, which will affect the numerical performance of the scheme. In particular, the constants associated with the error bounds we have discussed in our main approximation theorems depend on the commutators of the generators, and therefore on the choice of splitting.

In \cref{fig:DHO_convergence_extended} we compare the full-state trajectory error for the damped harmonic oscillator for several different choices of splitting, for a fixed initial condition. Overall, we observe the symmetric Strang splitting SPS to perform better than all lower order Lie-Trotter splittings. However, there are noticeable differences between splittings of the same time as well - based on the choice of generators. In particular, we notice that Splitting 1 (using only the $\frac{1}{2}p^2$ as the symplectic generator ) performs better than Splitting 2 (using $\frac{1}{2}(p^2+x^2))$ as the "symplectic part". This performance gap can depend on the initial condition, and the part of the domain explored by a trajectory (i.e. the pointwise evaluation of the brackets), but it is a clear demonstration of the fact that the choice of splitting can have a significant impact on the numerical performance of the integrator, even under our theoretical framework. This is a well-known phenomenon in the geometric integration literature, and is an important consideration when designing integrators for specific applications.

\begin{figure}
    \centering
    \includegraphics[width=0.9\textwidth]{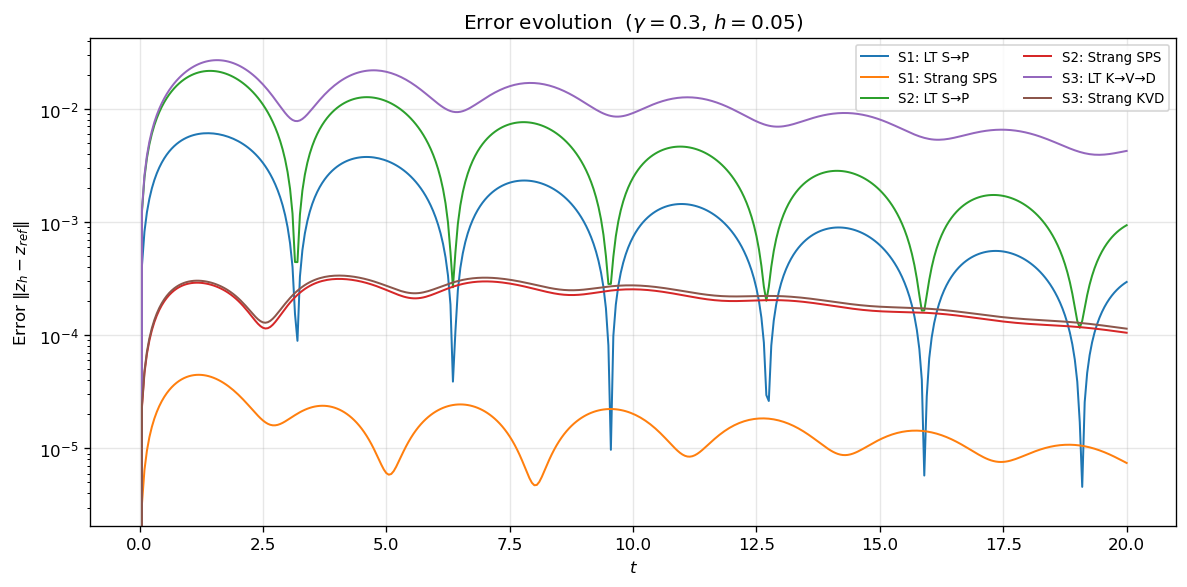}
    \caption{Extended convergence results for the damped harmonic oscillator, including a comparison of different choices of splitting. From left to right: exact reference (Bernoulli solve), splitting $A$ (standard), splitting $B$ (alternative).}
    \label{fig:DHO_convergence_extended}
\end{figure}

\subsection{VdP Oscillator}
\label{ap:VdP_extended}

In the main text, we only include the attractor projected on the $(x,u)$-plane. We further visualize the $(x,p)$-plane projection of the attractor for the forced VdP system, where the $p$-coordinate is obtained using automatic differentiation. We visualize the attractor for different choices of projection, integrator step-size $h$, and forcing parameter $\omega$ in \cref{fig:VdP_attractors_extended}. The latter controls whether the behavior is chaotic or periodic, and we can see that the attractor is more complex in the chaotic regime. 

\begin{figure}
    \centering
    \includegraphics[width=0.9\textwidth]{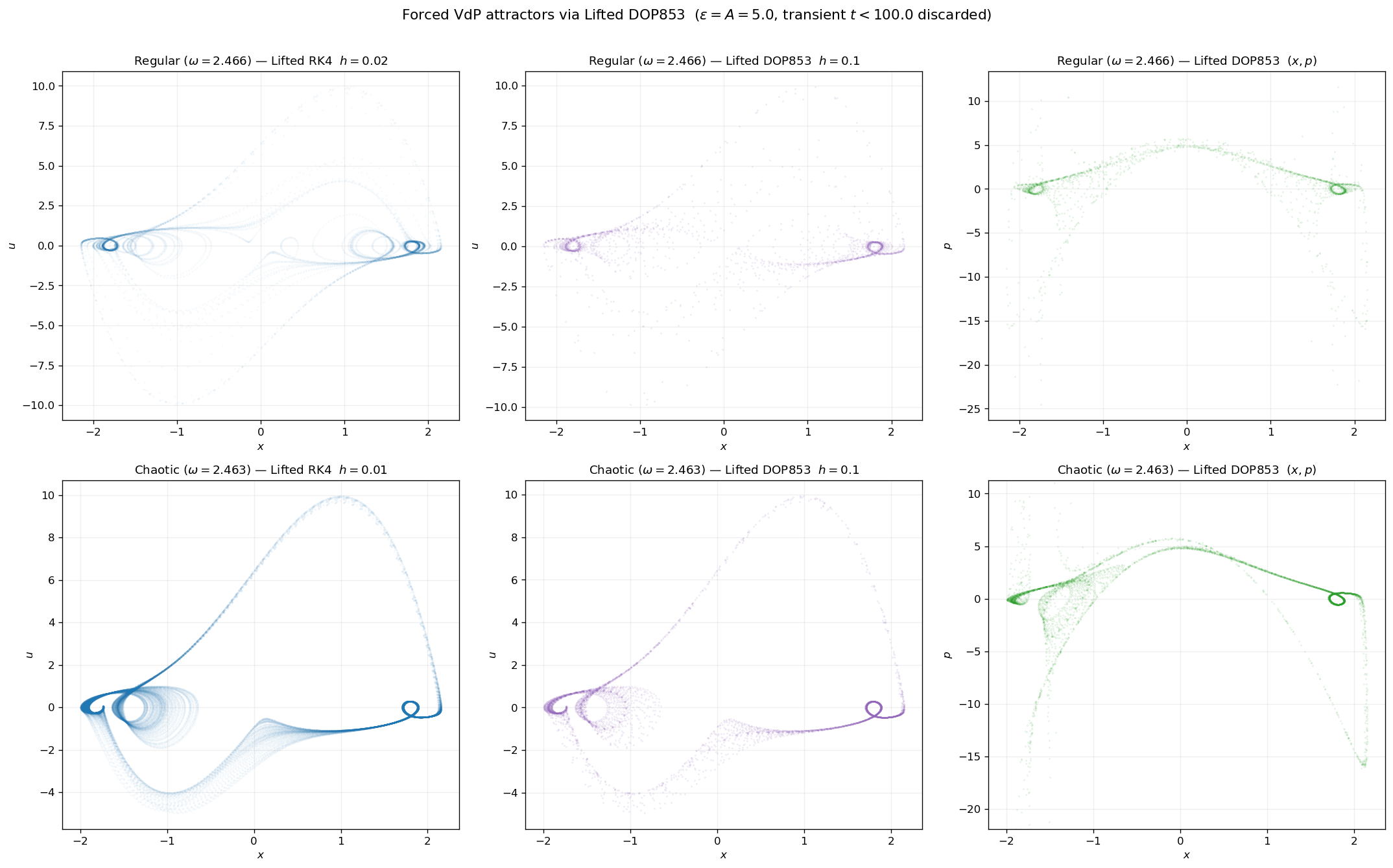}
    \caption{Visualization of the attractor for the forced VdP system for different choices of projection ($(x,u)$ vs. $(x,p)$), integrator step-size $h$ and forcing parameter $\omega$.}
    \label{fig:VdP_attractors_extended}
\end{figure}

\subsection{Nonlinear System}
\label{ap:Nonlinear_System_extended}

In the main text, we only present results for the simplest commutator gadget. We consider two additional commutator gadgets to approximate the flow of the nonlinear $p^2u$ damping term, a symmetric gadget (Splitting $D$) and a Yoshida-type gadget (Splitting $E$), which are defined in \cref{ap:Numerical_Integrators}. At the cost of additional computational overhead, these gadgets achieve first-order global accuracy. In \cref{fig:Gadgets_convergence} we showcase the convergence behavior for the three different gadgets, and in \cref{fig:Gadgets_attractors} we visualize the attracting set for each choice of gadget compared to the one obtained by an analytic splitting of the contact Hamiltonian. Overall, the qualitative behavior of the attractor is similar for all three gadgets, but the symmetric and Yoshida-type gadgets achieve a better approximation (and asymptotic order).

For the construction of higher-order integrators (including for approximating flows of commutators), see \cite{yoshida1990construction,suzuki1990fractal} and for a contact specific version, see \cite[Propositions 3.2, 3.3]{zadra2021vdp}

\begin{figure}
    \centering
    \includegraphics[width=0.9\textwidth]{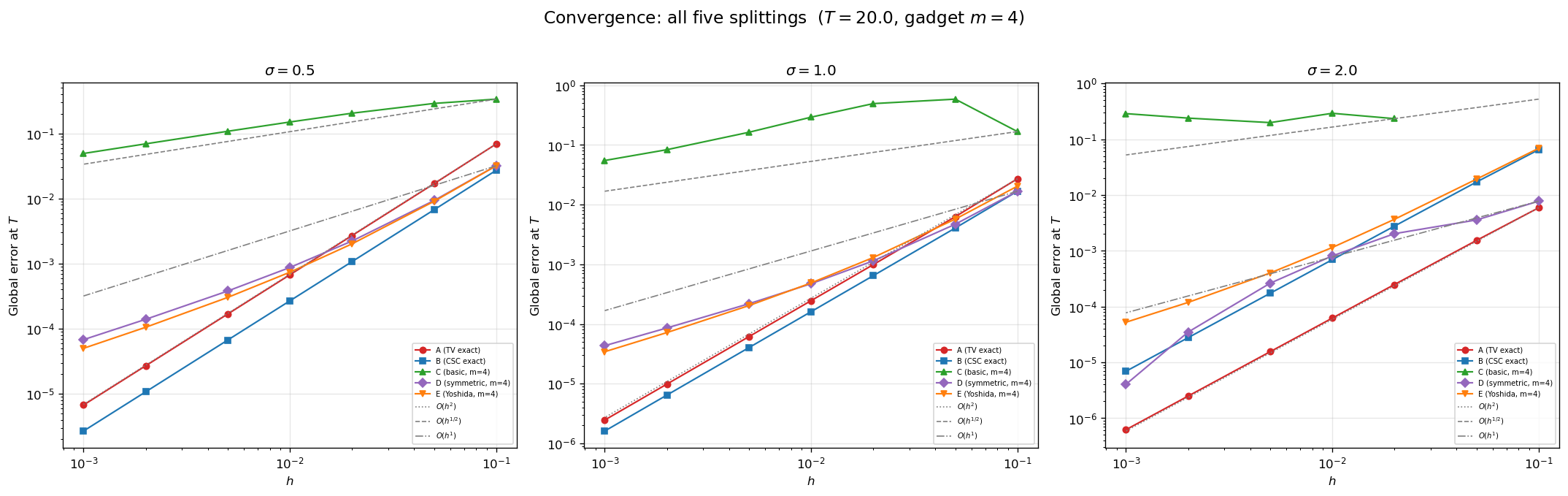}
    \caption{Extended convergence results for the nonlinear system, including the symmetric and Yoshida-type commutator gadgets, for different choices of damping parameter value $\sigma$.}
    \label{fig:Gadgets_convergence}
\end{figure}

\begin{figure}
    \centering
    \includegraphics[width=0.9\textwidth]{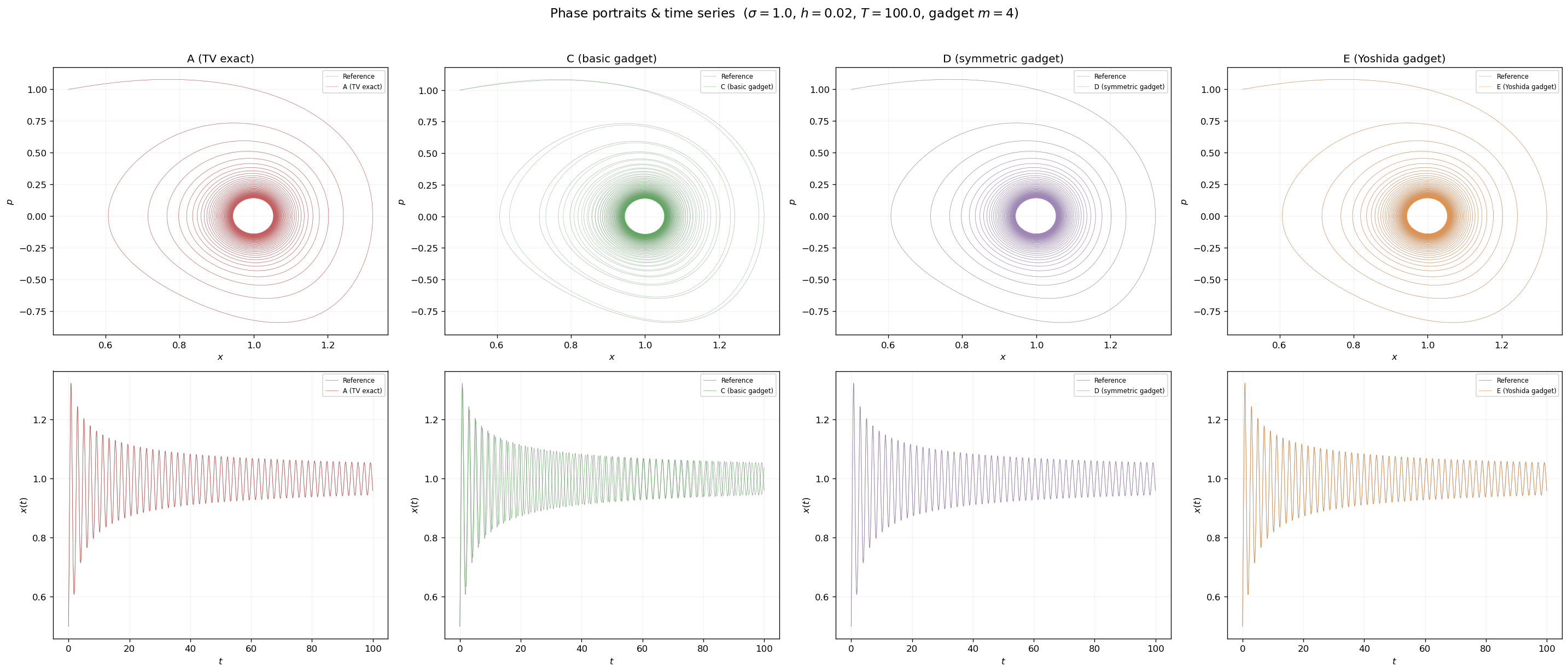}
    \caption{Visualization of the attracting set for different choices of the commutator gadget term. From left to right: exact reference (Bernoulli solve), standard commutator gadget $(O(h^{1/2}))$, symmetric gadget $(O(h))$, Yoshida-type gadget $(O(h))$.}
    \label{fig:Gadgets_attractors}
\end{figure}

\subsection{Numerical Integrators}
\label{ap:Numerical_Integrators}

\subsubsection*{RK4 Method}

Let
\[
\dot{y} = f(t,y), \qquad y_n \approx y(t_n), \qquad h = t_{n+1}-t_n.
\]
The classical four-stage Runge--Kutta method computes the internal stages
\begin{align}
    k_1 &= f(t_n,y_n), \\
    k_2 &= f\qty(t_n+\frac{h}{2},\, y_n+\frac{h}{2}k_1), \\
    k_3 &= f\qty(t_n+\frac{h}{2},\, y_n+\frac{h}{2}k_2), \\
    k_4 &= f\qty(t_n+h,\, y_n+h k_3),
\end{align}
followed by the update
\begin{equation}
    y_{n+1} = y_n + \frac{h}{6}\qty(k_1 + 2k_2 + 2k_3 + k_4).
\end{equation}
For smooth vector fields this gives a local truncation error of order $O(h^5)$ and therefore a global error of order $O(h^4)$.

When used to prolong a diffeomorphism as considered in the main text, RK4 is applied to a closed base-space system in variables $z=(x,u)$ or $z=(q,u)$. If the resulting numerical base map is written as $\varphi_h(z_n) = (\chi,\psi)$ then the momentum $p$ is reconstructed by the prolongation formula
\begin{equation}
    p_{n+1} = \frac{\partial_x \psi + p_n\,\partial_u \psi}{\partial_x \chi + p_n\,\partial_u \chi}.
\end{equation}
Equivalently, if $J = D\varphi_h(z_n)$ is the Jacobian of the base map, written as
\[
J = \begin{pmatrix} J_{00} & J_{01} \\ J_{10} & J_{11} \end{pmatrix},
\]
then
\begin{equation}
    p_{n+1} = \frac{J_{10} + p_n J_{11}}{J_{00} + p_n J_{01}}.
\end{equation}
This is the form used in the lifted RK4 experiments for the Van der Pol oscillator and for the prolonged DHO tests.

\medskip
\begingroup
\small
\setlength{\tabcolsep}{4pt}
\renewcommand{\arraystretch}{1.15}
\begin{center}
\captionsetup{hypcap=false}
\captionof{table}{Use of RK4 in the numerical experiments.}
\label{tab:appendix_rk4}
\begin{tabularx}{\textwidth}{|p{0.16\textwidth}|X|X|X|}
\hline
Experiment & Use & Parameter values & Initial conditions / horizon \\
\hline
Damped harmonic oscillator & Classical full-system RK4 baseline for the 3-D contact ODE. & $\gamma=0.3$; main sweep $h \in 10^{\mathrm{linspace}(-1,-2.5,15)}$; conformal-factor check uses $h=0.1$. & $(q_0,p_0,u_0)=(1,0,0)$; main comparison $T=20$; conformal-factor run $T=50$. \\
\hline
Damped harmonic oscillator & Lifted RK4 on the closed base problem $(q,u)$, followed by prolongation for $p$. & $\gamma=0.3$; $h \in 10^{\mathrm{linspace}(-1,-2.5,15)}$; local check at $h=0.1$. & $(q_0,p_0,u_0)=(1,0,0)$; $T=20$. \\
\hline
Forced Van der Pol oscillator & Lifted RK4 on the forced base system, followed by prolongation for $p$. & $\varepsilon=A=5$; regular case $\omega=2.466$, $h=0.02$, $T=200$; chaotic case $\omega=2.463$, $h=0.01$, $T=500$. & $(x_0,p_0,u_0)=(1,0,0)$. \\
\hline
\end{tabularx}
\end{center}
\endgroup

\subsubsection*{DOP Method}

Throughout the numerical experiments, ``DOP'' refers to the adaptive explicit Dormand--Prince method DOP853. As with any explicit embedded Runge--Kutta method, one computes internal stages of the form
\begin{equation}
    k_i = f\qty(t_n + c_i h_n,\, y_n + h_n \sum_{j<i} a_{ij} k_j),
\end{equation}
and then forms a high-order update
\begin{equation}
    y_{n+1}^{(8)} = y_n + h_n \sum_{i=1}^s b_i k_i.
\end{equation}
An embedded lower-order approximation
\begin{equation}
    \widehat{y}_{n+1} = y_n + h_n \sum_{i=1}^s \widehat{b}_i k_i
\end{equation}
provides the local defect estimate
\begin{equation}
    e_n = y_{n+1}^{(8)} - \widehat{y}_{n+1},
\end{equation}
which is then used to adapt the next step size according to a controller of the form
\begin{equation}
    h_{n+1} = s\,h_n\qty(\frac{\mathrm{tol}}{\norm{e_n}})^{1/8},
\end{equation}
where $s\in(0,1)$ is a safety factor and the exponent reflects the principal order of the accepted method.

In this work, DOP853 is used in two closely related ways. First, it serves as a high-accuracy reference solver for the full contact system or for the closed base-space system, depending on the example. Second, in the lifted adaptive Van der Pol experiments, DOP853 is applied to the augmented variational system
\begin{equation}
    \dot{z} = f(z,t), \qquad \dot{J} = Df(z,t)J, \qquad J(t_n)=I,
\end{equation}
so that the Jacobian of the numerical base map is obtained simultaneously with the state update. The contact momentum is then recovered from the same prolongation identity,
\begin{equation}
    p_{n+1} = \frac{J_{10} + p_n J_{11}}{J_{00} + p_n J_{01}}.
\end{equation}
Thus the distinction between lifted RK4 and lifted DOP853 lies entirely in the numerical base integrator and in the accuracy with which the Jacobian is computed; the contact reconstruction step is identical.

\medskip
\begingroup
\small
\setlength{\tabcolsep}{4pt}
\renewcommand{\arraystretch}{1.15}
\begin{center}
\captionsetup{hypcap=false}
\captionof{table}{Use of DOP853 in the numerical experiments.}
\label{tab:appendix_dop}
\begin{tabularx}{\textwidth}{|p{0.16\textwidth}|X|X|X|}
\hline
Experiment & Use & Solver settings / parameter values & Initial conditions / horizon \\
\hline
Damped harmonic oscillator & High-accuracy reference solver for the full 3-D contact ODE, used through the shared \texttt{reference\_solution} helper. & DOP853 with $\mathrm{rtol}=10^{-13}$, $\mathrm{atol}=10^{-15}$, dense output, and default $n_{\mathrm{eval}}=10001$; experiment parameter $\gamma=0.3$. & $(q_0,p_0,u_0)=(1,0,0)$; used at $T=20$ in the main studies and at $T=50$ in the conformal-factor comparison. \\
\hline
Forced Van der Pol oscillator & Base-space reference and lifted DOP853 forced-attractor integrations. & Base reference uses DOP853 with $\mathrm{rtol}=10^{-13}$, $\mathrm{atol}=10^{-15}$; lifted forced DOP853 uses $\mathrm{rtol}=10^{-10}$, $\mathrm{atol}=10^{-12}$ with outer step $h=0.1$; parameters $\varepsilon=A=5$, $\omega=2.466$ (regular) and $\omega=2.463$ (chaotic). & $(x_0,p_0,u_0)=(1,0,0)$; base-reference runs use $T=200$ and $T=500$; lifted-DOP853 attractor runs use $T=300$ and $T=600$. \\
\hline
Nonlinear dissipative double-well & High-accuracy reference solver for the full 3-D contact system in 02\_double\_well\_p2u\_splitting; DOP853 is attempted first, with Radau fallback when the Riccati component becomes too stiff. & DOP853 and Radau both use $\mathrm{rtol}=10^{-12}$, $\mathrm{atol}=10^{-14}$, and $\mathrm{max\_step}=0.05$; stability probes use $\sigma \in \{0.5,1,2,5\}$ with $T \in \{20,50,100,200\}$; phase-portrait references use $\sigma \in \{0,0.5,1,2\}$ with $T=150$. & $(x_0,p_0,u_0)=(0.5,1,0)$; additional attractor projections use $T=300$ for $\sigma=0.5$, $T=200$ for $\sigma=1$, and $T=150$ for $\sigma=2$. \\
\hline
\end{tabularx}
\end{center}
\endgroup

\subsubsection*{Bernoulli ODE Integrator}

Throughout these experiments, the ``Bernoulli ODE integrator'' is not a generic approximate time-stepping scheme like RK4 or DOP853, but rather an exact closed-form substep used when a split contact Hamiltonian reduces to a scalar Bernoulli-type evolution. In the present draft, this occurs in the nonlinear dissipative double-well example, where the dissipative term $\sigma p^2u$ and the effective kinetic term $\frac12(1+2\sigma u)p^2$ both admit analytic updates. The only inputs are the substep size $\tau$ and the model parameter $\sigma$; there are no tolerances or iterative solves.

For the contact sub-Hamiltonian
\begin{equation}
    H_B(x,p,u) = \sigma p^2u,
\end{equation}
the contact equations are
\begin{equation}
    \dot{x} = 2\sigma pu, \qquad \dot{p} = -\sigma p^3, \qquad \dot{u} = \sigma p^2u.
\end{equation}
Here $p$ satisfies a Bernoulli equation of cubic type, and the invariant $pu$ allows the remaining variables to be recovered exactly. Over one substep of length $\tau$, the map is
\begin{equation}
    p_{n+1} = \frac{p_n}{\sqrt{1 + 2\sigma p_n^2\tau}}, \qquad
    u_{n+1} = u_n\sqrt{1 + 2\sigma p_n^2\tau}, \qquad
    x_{n+1} = x_n + 2\sigma p_nu_n\tau.
\end{equation}
This is the exact Bernoulli contact step used in the CSC splitting for this system.

The TV splitting for the same example uses the effective kinetic Hamiltonian
\begin{equation}
    H_T(x,p,u) = \frac12(1+2\sigma u)p^2,
\end{equation}
for which
\begin{equation}
    \dot{x} = (1+2\sigma u)p, \qquad \dot{p} = -\sigma p^3, \qquad \dot{u} = \frac12(1+2\sigma u)p^2.
\end{equation}
In this case, $(1+2\sigma u)p$ is conserved along the subflow. Writing
\begin{equation}
    D_n = \sqrt{1 + 2\sigma p_n^2\tau},
\end{equation}
the exact update becomes
\begin{equation}
    p_{n+1} = \frac{p_n}{D_n}, \qquad
    u_{n+1} = \frac{(1+2\sigma u_n)D_n - 1}{2\sigma}, \qquad
    x_{n+1} = x_n + (1+2\sigma u_n)p_n\tau.
\end{equation}
This exact Bernoulli map is what makes the TV splitting fully explicit and second-order once composed symmetrically with the exact potential kick. The same analytic substeps are reused in the conformal-factor follow-up experiments for this system.

\subsubsection*{Higher-Order Commutator Gadgets}

For the nonlinear dissipative double-well example with
\begin{equation}
    H(x,p,u) = H_s(x,p,u) + \sigma p^2u, \qquad H_s(x,p,u) = \frac12 p^2 + (x^2-1)^2,
\end{equation}
the basic commutator construction writes the nonlinear contact term as
\begin{equation}
    p^2u = [A,B], \qquad A = -\frac12 u^2, \qquad B = p^2,
\end{equation}
and then approximates the corresponding bracket flow by a four-subflow gadget $G(\epsilon)$ built from the exact flows of $A$ and $B$. In the notebook implementation, $G(\epsilon)$ is calibrated so that
\begin{equation}
    G(\epsilon) = \exp\qty(\epsilon^2[A,B] + O(\epsilon^3)).
\end{equation}
Since $\epsilon \sim \sqrt{\sigma t_c}$ for a contact substep of duration $t_c$, the basic gadget used in Splitting C has local commutator error $O(\epsilon^3)=O(t_c^{3/2})$, which leads to global order $\frac12$ after composition with the St\"ormer--Verlet step for $H_s$.

The two higher-order variants used in the extended comparison for this same $p^2u$ example keep the same CSC-Strang outer structure, but replace the basic gadget by deeper real compositions that cancel the cubic Baker--Campbell--Hausdorff term.

The first improved variant is the \emph{symmetric gadget} (Splitting D). If $t_c$ denotes one contact half-step and $m=m_{\mathrm{gadget}}$ is the optional substepping parameter, the notebook defines
\begin{equation}
    \Gamma_D(t_c) = \qty(G(-s) \circ G(s))^m, \qquad s = \sqrt{\frac{\sigma t_c}{2m}}.
\end{equation}
By symmetry, the odd BCH terms cancel, so the gadget error improves from $O(\epsilon^3)$ to $O(\epsilon^4)$, that is, from $O(t_c^{3/2})$ to $O(t_c^2)$. The full method is then
\begin{equation}
    \Phi_D(\tau) = \Gamma_D\qty(\frac{\tau}{2}) \circ \Phi^{\mathrm{SV}}_{H_s}(\tau) \circ \Gamma_D\qty(\frac{\tau}{2}),
\end{equation}
where $\Phi^{\mathrm{SV}}_{H_s}$ is the standard St\"ormer--Verlet step for $H_s$. This yields a globally first-order contact integrator. Each call to $\Gamma_D$ uses $8m$ exact $A/B$ subflows.

The second improved variant is the \emph{Yoshida triple-jump gadget} (Splitting E), defined by
\begin{equation}
    \Gamma_E(t_c) = \qty(G(\gamma_1\epsilon) \circ G(\gamma_0\epsilon) \circ G(\gamma_1\epsilon))^m,
    \qquad \epsilon = \sqrt{\frac{\sigma t_c}{m}},
\end{equation}
with coefficients
\begin{equation}
    \gamma_1 = \frac{1}{\sqrt{2 + 2^{2/3}}}, \qquad \gamma_0 = -2^{1/3}\gamma_1.
\end{equation}
These satisfy
\begin{equation}
    2\gamma_1^2 + \gamma_0^2 = 1, \qquad 2\gamma_1^3 + \gamma_0^3 = 0,
\end{equation}
so the leading cubic BCH term is cancelled and the contact approximation again has per-step error $O(\epsilon^4)=O(t_c^2)$. The full integrator is
\begin{equation}
    \Phi_E(\tau) = \Gamma_E\qty(\frac{\tau}{2}) \circ \Phi^{\mathrm{SV}}_{H_s}(\tau) \circ \Gamma_E\qty(\frac{\tau}{2}),
\end{equation}
which is also globally first-order. Its cost is higher, however: each call to $\Gamma_E$ requires $12m$ exact $A/B$ subflows.

In the higher-order convergence and phase-portrait comparisons for this system, both Splittings D and E were run with $m_{\mathrm{gadget}}=4$. The notebook also includes a direct bracket-verification test, which fits $\epsilon$-rates $3.99$ and $4.00$ for the symmetric and Yoshida gadgets respectively, confirming the expected $O(\epsilon^4)$ local commutator error. Thus both methods improve the naive gadget from global order $\frac12$ to global order $1$, although the symmetric gadget achieves the same observed order at lower cost than the Yoshida composition.

\end{document}